\newcommand{\Z}{{\mathbb{Z}}}
\newcommand{\Q}{{\mathbb{Q}}}
\newcommand{\R}{{\mathbb{R}}}
\newcommand{\C}{{\mathbb{C}}}
\newcommand{\gl}{{\mathfrak{gl}}}
\newcommand{\fg}{{\mathfrak{g}}}
\newcommand{\fh}{{\mathfrak{h}}}
\newcommand{\be}{{\mathbf{e}}}
\newcommand{\bbf}{{\mathbf{f}}}
\newcommand{\cO}{{\mathcal{O}}}
\newcommand{\dalp}{{\dot{\alpha}}}
\newcommand{\dmu}{{\dot{\mu}}}
\newcommand{\dPhi}{{\dot{\Phi}}}
\newcommand{\Hom}{{\operatorname{Hom}}}
\newcommand{\GL}{{\operatorname{GL}}}
\renewcommand{\leq}{\leqslant}
\renewcommand{\geq}{\geqslant}
\newtheorem{thm}{Theorem}[section]
\newtheorem{lem}[thm]{Lemma}
\newtheorem{prop}[thm]{Proposition}
\newtheorem{cor}[thm]{Corollary}
\theoremstyle{definition}
\newtheorem{defn}[thm]{Definition}
\newtheorem{exmp}[thm]{Example}
\theoremstyle{remark}
\newtheorem{rem}[thm]{Remark}
\numberwithin{equation}{section}
\begin{document}
\title{Minuscule weights and Chevalley groups}
\author{Meinolf Geck}
\address{IAZ - Lehrstuhl f\"ur Algebra\\Universit\"at Stuttgart\\
Pfaffenwaldring 57\\D--70569 Stuttgart\\ Germany}
\curraddr{}
\email{meinolf.geck@mathematik.uni-stuttgart.de}
\thanks{This work was supported by DFG Priority Programme SPP 1489.}

\subjclass[2000]{Primary 20G40; Secondary 17B45}

\date{}

\begin{abstract}
The traditional construction of Chevalley groups relies on the choice
of certain signs for a Chevalley basis of the underlying Lie 
algebra~$\fg$. Recently, Lusztig simplified this construction for 
groups of adjoint type by using the ``canonical basis'' of the adjoint 
representation of~$\fg$; in particular, no choices of signs are required. 
The purpose of this note is to extend this to Chevalley groups which 
are not necessarily of adjoint type, using Jantzen's explicit
models of the minuscule highest weight representations of~$\fg$.
\end{abstract}

\keywords{Lie algebras, Chevalley groups, minuscule weights}
\maketitle

%%%%%%%%%%%%%%%%%%%%%%%%%%%%%%%%%%%%%%%%%%%%%%%%%%%%%%%%%%%%%%%%%%%%%%%%
\section{Introduction} \label{sec0}

Let $\fg$ be a finite-dimensional semisimple Lie algebra over $\C$.
Chevalley \cite{Ch} found a uniform procedure by which one can
associate with any field $K$ a corresponding group $G_K$ of ``type $\fg$''.
If $K$ is algebraically closed, then $G_K$ is a semisimple algebraic group
of adjoint type with root system equal to that of~$\fg$; if $K$ is finite, 
then this construction led to some new families of finite simple groups.
A detailed exposition can be found in the book by Carter \cite{Ca1}. The 
subsequent work of Chevalley \cite{Ch2} leads to an extension of the theory
by which one also obtains groups which are not necessarily of adjoint type; 
this, and much more, is contained in Steinberg's lecture notes \cite{St}.
As a typical example, if $\fg=\mathfrak{sl}_n(\C)$, then 
Chevalley's original construction yields the group $G_K=\mbox{PGL}_n(K)$. In 
order to obtain $\mbox{SL}_n(K)$ and all intermediate cases between 
$\mbox{SL}_n(K)$ and $\mbox{PGL}_n(K)$, one has to work in the setting 
of~\cite{St} which requires a bit more Lie theory,
most notably the basics about highest weight theory and Kostant's $\Z$-form 
of the universal enveloping algebra of~$\fg$. More recently, Lusztig
\cite{L3} used the theory of ``canonical bases'' (in the sense of 
\cite{L6}) to give a uniform procedure which even yields reductive 
algebraic groups of any type. 

For various applications, it is useful to be able to write down explicit
matrix representations for $\fg$ and $G_K$; such applications include, 
for example:
\begin{itemize}
\item The computation of nilpotent orbits and unipotent classes 
(especially in bad characteristic), see Holt--Spaltenstein \cite{HoSp}
and further references there.
\item The determination of composition multiplicities in Weyl modules for 
finite Chevalley groups, see L\"ubeck \cite{Lue} and further references there.
\item Algorithmic questions in the ``matrix group recognition project'', 
see Cohen et al. \cite{Coet}, Magaard--Wilson \cite{MaWi} and further
references there.
\end{itemize}
Now, in principle, explicit models for $\fg$ and $G_K$ can be written
down using \cite{Ca1}, \cite{St}, but these are not entirely canonical as 
they involve the choice of certain signs in a Chevalley basis of~$\fg$. 
Very recently, Lusztig \cite{L5} remarked that Chevalley's original
construction of $G_K$ can be simplified using the ``canonical basis''
of the adjoint representation of~$\fg$, in which the action of the
Chevalley generators of $\fg$ is given by explicit and remarkably
simple formulae.

In this note we show that there is an analogous simplification of the 
construction of groups $G_K$ which are not necessarily of adjoint type. 
The starting point is Jantzen's description \cite{J} of a minuscule highest 
weight representation of $\fg$, which we recall in Section~\ref{sec1}. 
This description involves an explicit basis on which the Chevalley
generators of $\fg$ act again by remarkably simple formulae. I~wish to 
thank George Lusztig for pointing out to me that this basis is actually 
the ``canonical basis'' of the module in the sense of \cite{L6}; see 
Proposition~\ref{remhw1}. We will not use this result as such but, of 
course, it provides a background explanation for why the various constructions
work so smoothly. In order to proceed, we then rely on the well-known fact 
that the simple reflections of the Weyl group of $\fg$ are induced by 
certain Lie algebra automorphisms. In Section~\ref{sec2}, we show that 
these automorphisms take a particularly simple form in our setting. 
Finally, in Sections~\ref{sec3} and \ref{sec4}, we discuss the construction
of the corresponding Chevalley groups. 

The construction here actually turns out to be simpler in many ways than that 
of groups of adjoint type. It is ``canonical'' in the sense that it does 
not involve any choices of signs; furthermore, it does not rely at all on 
the use of Kostant's $\Z$-form. The resulting matrix representations of 
$G_K$ are completely explicit and can be easily implemented on a computer. 
As in \cite{my}, we have made a certain attempt to keep the whole argument 
as elementary and self-contained as possible. 

%%%%%%%%%%%%%%%%%%%%%%%%%%%%%%%%%%%%%%%%%%%%%%%%%%%%%%%%%%%%%%%%%%%%%%%%
\section{Minuscule weight modules} \label{sec1}
We recall some basic facts about root systems; see, e.g., Carter
\cite[Chap.~2]{Ca1}, Humphreys \cite[Chap.~III]{H}. Let $E$ be a 
finite-dimensional vector space over $\Q$ and $(\;,\;) \colon E\times E
\rightarrow \Q$ be a symmetric bilinear form such that $(e,e)>0$ for all
$0\neq e\in E$. For each $0\neq e\in E$, we denote $e^\vee:=\frac{2}{(e,e)}
e\in E$ and define the corresponding reflection $w_e\colon E\rightarrow E$ 
by $w_e(v)=v-(v,e^\vee)e$ for all $v\in E$. Let $\Phi\subseteq E$ be a 
reduced crystallographic root system. Thus, $\Phi$ is a finite subset of 
$E\setminus \{0\}$ such that $E=\langle \Phi \rangle_\Q$; furthermore, 
the following hold for all $\alpha,\beta\in \Phi$:
\begin{itemize}
\item if $\beta\neq \pm \alpha$, then $\alpha,\beta$ are linearly
independent in $E$;
\item we have $(\beta,\alpha^\vee)\in\Z$ and $w_\alpha(\beta) \in \Phi$.
\end{itemize}
We assume throughout that $\Phi$ is irreducible. Let $\Pi=\{\alpha_i\mid 
i\in I\}$ be a set of simple roots in $\Phi$, where $I$ is a finite index 
set. Then $\Pi$ is a basis of $E$ and every $\alpha\in \Phi$ is a linear 
combination of $\Pi$ where either all coefficients are in $\Z_{\geq 0}$ 
or all coefficients are in $\Z_{\leq 0}$; correspondingly, we have a 
partition $\Phi=\Phi^+\cup \Phi^-$ where $\Phi^+$ are the positive roots 
and  $\Phi^-=-\Phi^+$ are the negative roots. The matrix
\[A=(a_{ij})_{i,j\in I}\qquad \mbox{where} \qquad a_{ij}:=(\alpha_j,
\alpha_i^\vee),\]
is called the Cartan matrix of $\Phi$ with respect to $\Pi$. We have
\[ a_{ii}=2 \qquad \mbox{and} \qquad a_{ij}\in\{0,-1,-2,-3\} \quad 
\mbox{for $i\neq j$}.\]
Furthermore, it is known that $A$ is independent of the choice of $\Pi$, 
up to simultaneous permutation of the rows and columns. Let $W:=\langle 
w_\alpha\mid \alpha\in\Phi\rangle \subseteq \GL(E)$ be the Weyl group of
$\Phi$. Let $S=\{s_i\mid i\in I\}$ where $s_i=w_{\alpha_i}$ for $i\in I$. 
Then $(W,S)$ is a Coxeter system and $\Phi=\{w(\alpha_i)\mid i\in I,
w\in W\}$. We note the following compatibility property.

\begin{rem} \label{rem00} We have $w(e^\vee)=(w(e))^\vee$ for all 
$w\in W$ and $e\in E$. Consequently, we have $(w^{-1}(e'),e^\vee)=
(e',(w(e))^\vee)$ for all $w\in W$ and $e,e'\in E$. 

(This immediately follows from that the fact that $(w(e),w(e))=(e,e)$.)
\end{rem}

We recall some notions concerning ``weights''; see, e.g., Humphreys 
\cite[\S 13]{H}. By definition, the {\em weight lattice} is the $\Z$-lattice 
$\Lambda\subseteq E$ spanned by the basis $\{\varpi_i\mid i\in I\}$ of 
$E$ which is dual to the basis $\{\alpha_i^\vee \mid i\in I\}$. Then $\Phi 
\subseteq \Lambda$ and $w(\Lambda) \subseteq \Lambda$ for all $w\in W$; 
we have $s_i(\varpi_j)=\varpi_j-\delta_{ij}\alpha_i$ for all $i,j\in I$.

A weight $\lambda\in\Lambda$ is called {\em dominant} if $(\lambda,
\alpha_i^\vee) \geq 0$ for all $i\in I$. Each $W$-orbit on $\Lambda$ 
contains exactly one dominant weight. Now let $0\neq \lambda\in\Lambda$ be 
dominant. Then $\lambda$ is called {\em minuscule} if $(\lambda,
\alpha^\vee) \in \{0,\pm 1\}$ for all $\alpha\in \Phi$. We then also have 
$(w(\lambda), \alpha^\vee)\in \{0, \pm 1\}$ for all $w\in W$ and 
$\alpha\in\Phi$. Furthermore, $\lambda= \varpi_i$ for some $i\in I$. The 
possibilities for the various types of $\Phi$ are listed in 
Table~\ref{Mcoxgraphs}; note that there are no minuscule dominant weights 
for $W$ of type $G_2$, $F_4$, $E_8$. (For all this, see \cite[VIII, \S 7, 
no.~3]{B} and \cite[Exc.~13.13]{H}; Humphreys uses the terminology 
``non-zero minimal dominant weight'' instead of ``minuscule dominant 
weight''.) 

\begin{table}[htbp]
\caption{Minuscule weights (marked by ``$\circ$'')}
\label{Mcoxgraphs}
\begin{center}
\begin{picture}(345,125)
\put( 15, 25){$E_6$}
\put( 40, 25){\circle{6}}
\put( 38, 31){$1$}
\put( 43, 25){\line(1,0){20}}
\put( 60, 25){\circle*{5}}
\put( 58, 31){$3$}
\put( 60, 25){\line(1,0){20}}
\put( 80, 25){\circle*{5}}
\put( 78, 31){$4$}
\put( 80, 25){\line(0,-1){20}}
\put( 80,  5){\circle*{5}}
\put( 85,  3){$2$}
\put( 80, 25){\line(1,0){20}}
\put(100, 25){\circle*{5}}
\put( 98, 31){$5$}
\put(100, 25){\line(1,0){17}}
\put(120, 25){\circle{6}}
\put(118, 31){$6$}

\put(215, 25){$E_7$}
\put(240, 25){\circle*{5}}
\put(238, 31){$1$}
\put(240, 25){\line(1,0){20}}
\put(260, 25){\circle*{5}}
\put(258, 31){$3$}
\put(260, 25){\line(1,0){20}}
\put(280, 25){\circle*{5}}
\put(278, 31){$4$}
\put(280, 25){\line(0,-1){20}}
\put(280,  5){\circle*{5}}
\put(285,  3){$2$}
\put(280, 25){\line(1,0){20}}
\put(300, 25){\circle*{5}}
\put(298, 31){$5$}
\put(300, 25){\line(1,0){20}}
\put(320, 25){\circle*{5}}
\put(318, 31){$6$}
\put(320, 25){\line(1,0){17}}
\put(340, 25){\circle{6}}
\put(338, 31){$7$}

\put( 13,60){$C_n$}
\put( 12,49){$\scriptstyle{n \geq 2}$}
\put( 40,55){\circle*{5}}
\put( 38,61){$1$}
\put( 40,56){\line(1,0){20}}
\put( 40,53){\line(1,0){20}}
\put( 46,52){$>$}
\put( 60,55){\circle*{5}}
\put( 58,61){$2$}
\put( 60,55){\line(1,0){20}}
\put( 80,55){\circle*{5}}
\put( 78,61){$3$}
\put( 80,55){\line(1,0){10}}
\put(100,55){\circle*{1}}
\put(110,55){\circle*{1}}
\put(120,55){\circle*{1}}
\put(130,55){\line(1,0){27}}
\put(140,55){\circle*{5}}
\put(128,61){$n{-}1$}
\put(148,55){\line(1,0){9}}
\put(160,55){\circle{6}}
\put(158,61){$n$}

\put( 13,90){$B_n$}
\put( 12,79){$\scriptstyle{n \geq 2}$}
\put( 40,85){\circle{6}}
\put( 38,91){$1$}
\put( 43,83){\line(1,0){16}}
\put( 43,86){\line(1,0){16}}
\put( 46,82){$<$}
\put( 60,85){\circle*{5}}
\put( 58,91){$2$}
\put( 60,85){\line(1,0){30}}
\put( 80,85){\circle*{5}}
\put( 78,91){$3$}
\put(100,85){\circle*{1}}
\put(110,85){\circle*{1}}
\put(120,85){\circle*{1}}
\put(130,85){\line(1,0){10}}
\put(140,85){\circle*{5}}
\put(128,91){$n{-}1$}
\put(140,85){\line(1,0){20}}
\put(160,85){\circle*{5}}
\put(158,91){$n$}

\put( 13,120){$A_{n}$}
\put( 12,109){$\scriptstyle{n \geq 1}$}
\put( 40,115){\circle{6}}
\put( 38,121){$1$}
\put( 43,115){\line(1,0){14}}
\put( 60,115){\circle{6}}
\put( 58,121){$2$}
\put( 63,115){\line(1,0){14}}
\put( 80,115){\circle{6}}
\put( 77,121){$3$}
\put( 83,115){\line(1,0){11}}
\put(100,115){\circle*{1}}
\put(110,115){\circle*{1}}
\put(120,115){\circle*{1}}
\put(128,115){\line(1,0){9}}
\put(140,115){\circle{6}}
\put(128,121){$n{-}1$}
\put(143,115){\line(1,0){14}}
\put(160,115){\circle{6}}
\put(158,121){$n$}

\put(200,87){$D_n$}
\put(200,76){$\scriptstyle{n \geq 3}$}
\put(220,105){\circle{6}}
\put(225,105){$2$}
\put(220,65){\circle{6}}
\put(226,60){$1$}
\put(222,103){\line(1,-1){18}}
\put(222,67){\line(1,1){18}}
\put(240,85){\circle*{5}}
\put(238,91){$3$}
\put(240,85){\line(1,0){30}}
\put(260,85){\circle*{5}}
\put(258,91){$4$}
\put(280,85){\circle*{1}}
\put(290,85){\circle*{1}}
\put(300,85){\circle*{1}}
\put(308,85){\line(1,0){10}}
\put(320,85){\circle*{5}}
\put(308,91){$n{-}1$}
\put(320,85){\line(1,0){17}}
\put(340,85){\circle{6}}
\put(338,91){$n$}
\end{picture}
\end{center}
\end{table}

%By \cite[\S 13.4]{H}, $\Psi$ is a finite set. 
\begin{defn}[Jantzen \protect{\cite[5A.1]{J}}] \label{defj}
Assume that $\Psi\subseteq \Lambda$ is a non-empty union of $W$-orbits
of minuscule weights. (In particular, $\Psi$ is finite and the Dynkin 
diagram of $\Phi$ is one of the graphs in Table~\ref{Mcoxgraphs}.) 
Let $M$ be a $\C$-vector space with basis $\{z_\mu \mid \mu \in \Psi\}$. 
We define linear maps
\[e_i\colon M\rightarrow M, \qquad f_i\colon M\rightarrow M,\qquad
h_i\colon M\rightarrow M\qquad (i\in I)\]
by the following formulae, where $\mu\in \Psi$:
\begin{align*}
e_i(z_\mu)&:=\left\{\begin{array}{cl} z_{\mu+\alpha_i} & \quad 
\mbox{if $(\mu,\alpha_i^\vee)=-1$},\\0 & \quad \mbox{otherwise},
\end{array}\right.\\
f_i(z_\mu)&:=\left\{\begin{array}{cl} z_{\mu-\alpha_i} & \quad 
\mbox{if $(\mu,\alpha_i^\vee)=1$},\\0 & \quad \mbox{otherwise},
\end{array}\right.\\
h_i(z_\mu)&:=(\mu,\alpha_i^\vee)z_\mu.
\end{align*}
These maps are well-defined: if $(\mu,\alpha_i^\vee)=-1$, then $\mu+\alpha_i=
s_i(\mu)\in \Psi$; similarly, if $(\mu,\alpha_i^\vee) =1$, then $\mu-
\alpha_i=s_i(\mu)\in\Psi$. (Note that Jantzen actually deals with the 
quantum group case, which gives rise to a number of technical complications 
which are not present in our setting here.)
\end{defn}

\begin{rem} \label{rem10} As in \cite[\S 10.1]{H}, we have a partial 
order on $E$ defined by $e\preceq e'$ if $e=e'$ or $e'-e$ is a sum of 
positive roots. Let us choose an enumeration $\Psi=\{\mu_1,\ldots,
\mu_d\}$ such that  $i\leq j$ whenever $\mu_j\preceq \mu_i$. Then
the above formulae show that each $e_i$ is represented by a strictly 
upper triangular matrix and each $f_i$ is represented by a strictly 
lower triangular matrix. In particular, $e_i,f_i$ are nilpotent linear 
maps; in fact, one immediately checks that $e_i^2=f_i^2=0$ for 
$i\in I$. Also note that, clearly, each $h_i$ is represented by a 
diagonal matrx.
\end{rem}

\begin{lem} \label{rem00a} Let $\Psi\subseteq \Lambda$ be as in
Definition~\ref{defj}. Then $\Phi\subseteq \Z\Psi$. More precisely,
for each $i\in I$, there exists some $\mu\in\Psi$ such that 
$(\mu,\alpha_i^\vee)\neq 0$. (In particular, we have $h_i\neq 0$.) Then 
$s_i(\mu)= \mu\pm \alpha_i \in \Psi$ and so $\pm \alpha_i=s_i(\mu)-
\mu\in\Z\Psi$.  
\end{lem}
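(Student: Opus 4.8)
The plan is to reduce the assertion $\Phi\subseteq\Z\Psi$ to the single statement that every \emph{simple} root lies in $\Z\Psi$. Indeed, every root of $\Phi$ is an integral linear combination of the simple roots (as recalled in Section~\ref{sec1}), so $\Phi\subseteq\Z\Pi$, and it therefore suffices to show $\alpha_i\in\Z\Psi$ for each $i\in I$. Granting the existence claim in the statement --- that for each $i$ there is some $\mu\in\Psi$ with $(\mu,\alpha_i^\vee)\neq 0$ --- the rest is immediate and is exactly the computation indicated in the statement: minusculeness forces $(\mu,\alpha_i^\vee)=\pm 1$, so $s_i(\mu)=\mu-(\mu,\alpha_i^\vee)\alpha_i=\mu\mp\alpha_i$ again lies in $\Psi$ (as $\Psi$ is a union of $W$-orbits, hence $W$-stable), and thus $\pm\alpha_i=s_i(\mu)-\mu\in\Z\Psi$. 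So the whole content of the lemma sits in the existence claim.

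To prove the existence claim I would fix one $W$-orbit $W\lambda\subseteq\Psi$, taking $\lambda$ to be the unique dominant weight in it. Since $\lambda$ is a nonzero minuscule dominant weight, we have $\lambda=\varpi_j$ for some $j\in I$, and hence $(\lambda,\alpha_k^\vee)=\delta_{jk}$ for all $k\in I$. I then introduce the ``bad set''
\[
J:=\{\,i\in I \mid (\mu,\alpha_i^\vee)=0 \text{ for all } \mu\in W\lambda\,\},
\]
and the goal becomes to show $J=\emptyset$; note that this already yields the existence claim using this single orbit, uniformly in $i$. A first observation is that $j\notin J$, because $(\lambda,\alpha_j^\vee)=1\neq 0$; in particular $J\neq I$.

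The key idea, and the only point where the irreducibility of $\Phi$ is used, is a connectivity argument on the Dynkin diagram. Suppose $J\neq\emptyset$. Since the diagram is connected and $\emptyset\neq J\neq I$, there must be an edge joining some $i\in J$ to some $k\notin J$, i.e.\ $a_{ik}=(\alpha_k,\alpha_i^\vee)\neq 0$. As $k\notin J$, I may choose $\mu\in W\lambda$ with $(\mu,\alpha_k^\vee)\neq 0$, so $(\mu,\alpha_k^\vee)=\pm 1$. Applying $s_k$ and using $i\in J$ to annihilate the first term, one computes
\[
(s_k(\mu),\alpha_i^\vee)=(\mu,\alpha_i^\vee)-(\mu,\alpha_k^\vee)\,a_{ik}=-(\mu,\alpha_k^\vee)\,a_{ik}\neq 0 .
\]
Since $s_k(\mu)\in W\lambda\subseteq\Psi$, this contradicts $i\in J$. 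Hence $J=\emptyset$, which is what we wanted.

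The step I expect to require the most care is this connectivity argument: one must make sure that $s_k(\mu)$ stays inside the orbit $W\lambda$ (which is automatic from $W$-stability) and that the minuscule hypothesis is genuinely what pins $(\mu,\alpha_k^\vee)$ down to $\pm 1$, so that passing from $\mu$ to $s_k(\mu)$ moves the $\alpha_i^\vee$-pairing by exactly $a_{ik}$. The lattice reduction in the first paragraph and the one-line computation above are routine once the ``bad set'' $J$ has been shown to be empty.
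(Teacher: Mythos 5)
Your proof is correct, and it takes a genuinely different route to the one point where the lemma has real content, namely the existence, for each $i\in I$, of some $\mu\in\Psi$ with $(\mu,\alpha_i^\vee)\neq 0$. The paper argues constructively: it fixes a path $i_0,i_1,\ldots,i_k,i$ in the Dynkin diagram from a node $i_0$ with $\varpi_{i_0}\in\Psi$ to the given node $i$, and shows by induction along the path that $w=s_{i_k}\cdots s_{i_1}s_{i_0}$ satisfies $(w(\varpi_{i_0}),\alpha_i^\vee)\neq 0$; this requires tracking the coefficients $c_1,\ldots,c_k$ of the simple roots appearing in $w(\varpi_{i_0})-\varpi_{i_0}$ and uses the fact that the Dynkin diagram is a \emph{tree} (so that $\alpha_i$ is not joined to any of $\alpha_{i_0},\ldots,\alpha_{i_{k-1}}$, and non-consecutive path nodes do not interfere). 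Your ``bad set'' argument replaces all of this bookkeeping by a single boundary-edge contradiction: if $J=\{i\in I\mid (\mu,\alpha_i^\vee)=0\ \text{for all}\ \mu\in W\lambda\}$ were a nonempty proper subset of $I$, connectedness alone produces an edge from some $k\notin J$ to some $i\in J$, and then $(s_k(\mu),\alpha_i^\vee)=-(\mu,\alpha_k^\vee)\,a_{ik}\neq 0$ gives the contradiction. This is shorter, needs only connectedness of the diagram (not its tree structure), and in fact does not even need minusculeness for the existence claim --- the nonvanishing of $-(\mu,\alpha_k^\vee)a_{ik}$ only uses that both factors are nonzero, so your aside that minusculeness ``pins the pairing down to $\pm 1$'' is true but not actually needed there; minusculeness enters only in the final step $s_i(\mu)-\mu=\pm\alpha_i$, exactly as in the paper. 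What the paper's version buys in exchange is an explicit Weyl group element $w$ witnessing the nonvanishing, which your contradiction argument does not exhibit (though iterating your one-edge transfer along a path would recover it).
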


\begin{proof} Let $i_0\in I$ be such that $\varpi_{i_0}\in\Psi$ is 
minuscule. If $i=i_0$, then $(\varpi_{i_0},\alpha_i^\vee)=1$ and so 
the assertion is clear. Now assume that $i\neq i_0$. We claim that there
exists some $w\in W$ such that $(w(\varpi_{i_0}),\alpha_i^\vee)\neq 0$.
Indeed, we can find a path in the Dynkin diagram of $\Phi$ connecting 
$i_0$ and~$i$. Let $i_1,\ldots,i_k\in I$ ($k\geq 0$) be such that $i_0,
i_1,\ldots, i_k,i$ label the nodes along this path. Then $s_{i_0}
(\varpi_{i_0})=\varpi_{i_0}-\alpha_{i_0}$; furthermore, $s_{i_1}
(\varpi_{i_0})=\varpi_{i_0}$ and so $s_{i_1}s_{i_0}(\varpi_{i_0})=
\varpi_{i_0}-s_{i_1}(\alpha_{i_0})=\varpi_{i_0}-\alpha_{i_0}-
c_1\alpha_{i_1}$ where $c_1\neq 0$ since $i_0,i_1$ are joined by an edge 
in the graph. Next, since $i_0,i_2$ are not joined by an edge, we have
$s_{i_2}(\alpha_{i_1})=\alpha_{i_1}$ and so $s_{i_2}s_{i_1}s_{i_0}
(\varpi_{i_0})=-\varpi_{i_0}-\alpha_{i_0}-c_1\alpha_{i_1}-c_2\alpha_{i_2}$ 
where $c_2\neq 0$ since $i_1,i_2$ are joined by an edge. Continuing in this 
way we find that $s_{i_k}\cdots s_{i_1}s_{i_0} (\varpi_{i_0})=-\varpi_{i_0}
-\alpha_{i_0}-c_1\alpha_{i_1}-\ldots -c_k\alpha_{i_k}$ with $c_j\neq 0$
for $1\leq j\leq k$. But then $(s_{i_k}\cdots s_{i_1}s_{i_0}(\varpi_{i_0}),
\alpha_i^\vee)\neq 0$ since $\alpha_i$ is joined to $\alpha_{i_k}$ by
an edge but not joined to any $\alpha_{i_0},\ldots, \alpha_{i_{k-1}}$.
(Note that the Dynkin diagram of $\Phi$ is a tree.)
\end{proof}

We now consider the Lie algebra $\gl(M)$, with the usual Lie bracket. 

\begin{lem}[Cf.\ \protect{\cite[5A.1]{J}}] \label{lemj} The elements
$\{h_i\mid i\in I\}$ are linearly independent in $\gl(M)$ and 
commute with each other. Furthermore, we have the relations:
\begin{itemize}
\item[(a)] $[h_j,e_i]=(\alpha_j,\alpha_i^\vee)e_i$ and
$[h_j,f_i]=-(\alpha_j,\alpha_i^\vee)f_i$ for all $i,j\in I$.
\item[(b)] $[e_i,f_i]=h_i$ for all $i\in I$ and $[e_i,f_j]=0$ for 
all $i\neq j$ in $I$.
\item[(c)] We have $\operatorname{ad}(e_i)^{1-a_{ij}}(e_j)=
\operatorname{ad}(f_i)^{1-a_{ij}}(f_j)=0$ for all $i\neq j$ in $I$.
\end{itemize}
\end{lem}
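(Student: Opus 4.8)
The plan is to verify every assertion by letting the operators act on the basis $\{z_\mu\}$ and exploiting two features special to this situation: each $\mu\in\Psi$ lies in a $W$-orbit of minuscule weights, so $(\mu,\alpha_i^\vee)\in\{0,\pm1\}$ for every $i$, and $a_{ij}=(\alpha_j,\alpha_i^\vee)\le0$ whenever $i\ne j$. First I would dispose of the claims about the $h_i$: they are simultaneously diagonal in the basis $\{z_\mu\}$, hence commute. For linear independence, suppose $\sum_i c_ih_i=0$; applying this to $z_\mu$ gives $(\mu,\sum_i c_i\alpha_i^\vee)=0$ for every $\mu\in\Psi$. By Lemma~\ref{rem00a} we have $\Phi\subseteq\Z\Psi$, so $\Psi$ spans $E$; since the form is nondegenerate this forces $\sum_i c_i\alpha_i^\vee=0$, and the linear independence of the coroots $\{\alpha_i^\vee\}$ then yields $c_i=0$ for all $i$.

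For (a) I would compute $[h_j,e_i](z_\mu)$ directly: it is $0$ unless $(\mu,\alpha_i^\vee)=-1$, in which case $e_i(z_\mu)=z_{\mu+\alpha_i}$ and the commutator returns $\bigl((\mu+\alpha_i,\alpha_j^\vee)-(\mu,\alpha_j^\vee)\bigr)z_{\mu+\alpha_i}=(\alpha_i,\alpha_j^\vee)\,z_{\mu+\alpha_i}$, giving relation~(a), with the $f_i$-relation following after the opposite sign. For $[e_i,f_i]=h_i$ in (b) I would split on the value $(\mu,\alpha_i^\vee)\in\{0,\pm1\}$: if it is $0$ both composites vanish; if it is $+1$ then $e_if_i(z_\mu)=z_\mu$ while $f_ie_i(z_\mu)=0$; if it is $-1$ the roles reverse to give $-z_\mu$; in each case this equals $h_i(z_\mu)=(\mu,\alpha_i^\vee)z_\mu$.

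The first genuinely representation-specific point is $[e_i,f_j]=0$ for $i\ne j$, and here the minuscule bound does the work. For $e_if_j(z_\mu)\ne0$ one needs $(\mu,\alpha_j^\vee)=1$ and $(\mu,\alpha_i^\vee)-a_{ij}=-1$; since $(\mu,\alpha_i^\vee)\ge-1$ and $a_{ij}\le0$, this can happen only when $a_{ij}=0$ (and then $(\mu,\alpha_i^\vee)=-1$). The same analysis applies to $f_je_i$, using $a_{ji}=0\iff a_{ij}=0$, and in the orthogonal case the two composites act nontrivially on exactly the same $z_\mu$ with common image $z_{\mu+\alpha_i-\alpha_j}$, so they cancel.

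Finally, for the Serre relations (c) I would expand
\[
\operatorname{ad}(e_i)^{m}(e_j)=\sum_{k=0}^{m}(-1)^k\binom{m}{k}e_i^{\,m-k}e_je_i^{\,k},\qquad m:=1-a_{ij}.
\]
Since $e_i^2=0$ by Remark~\ref{rem10}, a summand survives only when $m-k\le1$ and $k\le1$, which is impossible once $m\ge3$; this settles $a_{ij}\in\{-2,-3\}$ at a stroke. For $a_{ij}=0$ ($m=1$) the relation is $[e_i,e_j]=0$, checked exactly as in the orthogonal case above. For $a_{ij}=-1$ ($m=2$) the expansion collapses to $-2\,e_ie_je_i$, and $e_ie_je_i=0$ because, if the first $e_i$ acts (so $(\mu,\alpha_i^\vee)=-1$), then $e_je_i(z_\mu)$ is a multiple of $z_{\mu+\alpha_i+\alpha_j}$ (possibly zero), on which the final $e_i$ vanishes since $(\mu+\alpha_i+\alpha_j,\alpha_i^\vee)=-1+2+a_{ij}=0\ne-1$. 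The relations for the $f_i$ are identical after interchanging the roles of $+1$ and $-1$ in the weight conditions. I expect the only delicate steps to be the vanishing of $[e_i,f_j]$ and of $e_ie_je_i$; both rest entirely on the fact that $(\mu,\alpha_i^\vee)$ can never leave $\{0,\pm1\}$, which is precisely the minuscule hypothesis.
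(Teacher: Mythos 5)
Your proof is correct and follows essentially the same route as the paper's: direct verification on the basis $\{z_\mu\}$ using the minuscule constraint $(\mu,\alpha_i^\vee)\in\{0,\pm 1\}$, the sign condition $a_{ij}\le 0$, the nilpotency $e_i^2=f_i^2=0$, and Lemma~\ref{rem00a} for the linear independence of the $h_i$. The only organizational difference is in (c), where your binomial expansion of $\operatorname{ad}(e_i)^{m}$ disposes of all cases $a_{ij}\le -2$ at once, while the paper instead uses the weight computation to show that $e_ie_je_i\neq 0$ forces $a_{ij}=-2$ and then brackets once more with $e_i$ --- both variants rest on exactly the same fact $e_i^2=0$, so this is a cosmetic rather than substantive difference.
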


\begin{proof} Since each $h_i$ is represented by a diagonal matrix,
it is clear that $[h_i,h_j]=0$ for all $i,j$. Assume that we have a relation
$\sum_{j\in I} x_jh_j=0$ where $x_j\in \C$. Then we obtain
\[0=\sum_{j\in I} x_jh_j(z_\mu) =\Bigl(\sum_{j\in I} x_j(\mu,
\alpha_j^\vee)\Bigr)z_\mu\qquad \mbox{for all $\mu\in\Psi$}.\]
By Lemma~\ref{rem00a}, $\Phi\subseteq \Z\Psi$ and so $E=\langle\Psi
\rangle_\R$. This forces $x_j=0$ for all $j\in I$.

(a) Let $\mu\in \Psi$. If $(\mu,\alpha_i^\vee)=-1$, then $e_i(z_\mu)=
z_{\mu+\alpha_i}$ and so $[h_j,e_i](z_\mu)=h_j(z_{\mu+\alpha_i})-
(\mu,\alpha_j^\vee)z_{\mu+\alpha_i}=(\alpha_i,\alpha_j^\vee)e_i(z_\mu)$, as 
required. If $(\mu,\alpha_i^\vee)\neq -1$, then both $[h_j,e_i](z_\mu)$
and $(\alpha_i,\alpha_j^\vee)e_i(z_\mu)$ are equal to~$0$. Thus,
$[h_j,e_i]=(\alpha_j,\alpha_i^\vee)e_i$. The argument for proving $[h_j,
f_i]=-(\alpha_j,\alpha_i^\vee)f_i$ is analogous. 

(b) Let $i\in I$ and $\mu\in \Psi$. If $(\mu,\alpha_i^\vee)=-1$, then 
$f_{i}(z_\mu)=0$ and so $[e_i,f_i](z_\mu)=-f_i e_i(z_\mu)=-f_i
(z_{\mu+\alpha_i})=-z_\mu$, since $(\mu+\alpha_i,\alpha_i^\vee)=1$. On 
the other hand, $h_i(z_\mu)=(\mu,\alpha_i^\vee)z_\mu=-z_\mu$, as required. 
If $(\mu,\alpha_i^\vee)=1$, the argument is analogous. If $(\mu,
\alpha_i^\vee)=0$, then both $[e_i,f_i](z_\mu)$ and $(\mu,\alpha_i^\vee)
h_i(z_\mu)$ are~$0$. Thus, $[e_i,f_i]=h_i$. Now let $j\in I$, $i\neq j$. 
We must show that $e_if_j(z_\mu)=f_je_i(z_\mu)$. We have 
\begin{align*}
f_je_i(z_\mu)&=\left\{\begin{array}{cl}
z_{\mu+\alpha_i-\alpha_j} & \; \mbox{if $(\mu,\alpha_i^\vee)=-1$ and 
$(\mu+\alpha_i,\alpha_j^\vee) =1$},\\ 0 & \;\mbox{otherwise};
\end{array}\right.\\
e_if_j(z_\mu)&=\left\{\begin{array}{cl}
z_{\mu+\alpha_i-\alpha_j} & \; \mbox{if $(\mu,\alpha_j^\vee)=1$ and 
$(\mu-\alpha_j,\alpha_i^\vee)=-1$},\\ 0 & \;\mbox{otherwise}.
\end{array}\right.
\end{align*}
So we only need to show that the conditions on the right hand side are
equivalent. Assume first that $(\mu,\alpha_i^\vee)=-1$ and 
$(\mu+\alpha_i,\alpha_j^\vee) =1$. Since $i\neq j$, we have
$(\alpha_i,\alpha_j^\vee)\leq 0$ and so $(\mu,\alpha_j^\vee)=
1-(\alpha_i,\alpha_j^\vee)\geq 1$. Hence, we must have $(\mu,\alpha_j^\vee)
=1$ and $(\alpha_i,\alpha_j^\vee)=0$. This then also yields that 
$(\mu-\alpha_j,\alpha_i^\vee)=-1$, as required. The reverse implication
is proved similarly.

(c) First we show that $\mbox{ad}(e_i)^{1-a_{ij}}(e_j)=0$ for
$i\neq j$. Let $\mu\in \Psi$. We have
\begin{align*}
e_ie_j(z_\mu)&=\left\{\begin{array}{cl} z_{\mu+\alpha_i+\alpha_j}
&  \quad \mbox{if $(\mu,\alpha_j^\vee)=-1$ and $(\mu+\alpha_j,\alpha_i^\vee)
=-1$},\\ 0 & \quad \mbox{otherwise};\end{array}\right.\\
e_je_i(z_\mu)&=\left\{\begin{array}{cl} z_{\mu+\alpha_i+\alpha_j}
&  \quad \mbox{if $(\mu,\alpha_i^\vee)=-1$ and $(\mu+\alpha_i,\alpha_j^\vee)
=-1$},\\ 0 & \quad \mbox{otherwise}.\end{array}\right.
\end{align*}
Now $(\mu+\alpha_j,\alpha_i^\vee)=(\mu,\alpha_i^\vee)+a_{ij}$ and 
$(\mu+\alpha_i,\alpha_j^\vee)=(\mu,\alpha_j^\vee)+a_{ji}$. If $a_{ij}=
a_{ji}=0$, then $e_ie_j(z_\mu)=e_je_i(z_\mu)$ for all $\mu\in\Psi$ and 
so $\mbox{ad}(e_i)(e_j)=[e_i,e_j]=0$, as required. Now assume that
$a_{ij}<0$. We have $\mbox{ad}(e_i)^2(e_j)=[e_i,[e_i,e_j]]=e_i^2e_j-2e_i
e_je_i+e_je_i^2=-2e_ie_je_i$ since $e_i^2=0$; see Remark~\ref{rem10}. 
If $e_ie_je_i=0$, then $\mbox{ad}(e_i)^2(e_j)=0$, as required. Finally,
assume that $e_ie_je_i(z_\mu)\neq 0$ for some $\mu\in\Psi$. Then $(\mu,
\alpha_i^\vee)=(\mu+\alpha_i, \alpha_j^\vee)=(\mu+\alpha_i+\alpha_j, 
\alpha_i^\vee)=-1$, which implies that $a_{ij}=-2$. But then 
$\mbox{ad}(e_i)^{1-a_{ij}}(e_j)=\mbox{ad}(e_i)^3(e_j)=-2[e_i,e_ie_j
e_i]=0$. 

The argument for the relation $\mbox{ad}(f_i)^{1-a_{ij}}(f_j)=0$ is 
completely analogous.
\end{proof}

Let $\fg \subseteq \gl(M)$ be the Lie subalgebra generated by the maps 
$e_i,f_i$ ($i\in I$). Let $\fh:=\langle h_i \mid i\in I\rangle_\C$; by 
Lemma~\ref{lemj}, this is an abelian Lie subalgebra of $\fg$. Let $\fh^*=
\Hom(\fh,\C)$. For $\mu\in \Lambda$, we define $\dmu \in\fh^*$ 
by $\dmu(h_j):=(\mu,\alpha_j^\vee)$ for all $j\in I$. For any $\lambda 
\in\fh^*$, we define as usual $\fg_\lambda:=\{x\in\fg \mid [h,x]=
\lambda(h)x \mbox{ for all $h\in \fh$}\}$. If $\lambda=\dmu$, we also 
write $\fg_\mu$ instead of $\fg_{\dmu}$; thus, $e_i\in\fg_{\alpha_i}$ 
and $f_i\in\fg_{-\alpha_i}$ for $i\in I$.

\begin{prop} \label{propj} Recall that we fixed a set $\Psi\subseteq 
\Lambda$ as in Definition~\ref{defj}. Then, with the above notation, 
$\fh\subseteq \fg$ is a Cartan subalgebra and $\fg$ is simple with 
corresponding root system $\dPhi:=\{\dalp\mid \alpha \in\Phi\}$. In 
particular, we have a direct sum decomposition $\fg =\fh \oplus 
\bigoplus_{\alpha \in\Phi} \fg_{\alpha}$ where $\dim \fg_{\alpha}=1$ for 
$\alpha\in\Phi$.
\end{prop}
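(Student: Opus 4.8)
The plan is to identify $\fg$ with a semisimple Lie algebra whose root
system is $\dPhi$ by verifying the defining data of a Cartan subalgebra and
root space decomposition directly from the explicit action on $M$. First I
would establish the weight space decomposition of $M$ itself: since each
$h_i$ is diagonal in the basis $\{z_\mu\}$ (Remark~\ref{rem10}), we have
$h(z_\mu)=\dmu(h)z_\mu$ for all $h\in\fh$, so $M=\bigoplus_{\mu\in\Psi}
M_\mu$ where $M_\mu=\C z_\mu$ is the $\dmu$-weight space. Because $\fh$ is
abelian (Lemma~\ref{lemj}) and acts diagonalizably on $M$, the adjoint
action of $\fh$ on $\gl(M)$ is also diagonalizable, and hence so is its
restriction to the subalgebra $\fg$. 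This gives a decomposition
$\fg=\bigoplus_{\lambda\in\fh^*}\fg_\lambda$ as $\fh$-modules, and the
relations in Lemma~\ref{lemj}(a) show that $e_i\in\fg_{\alpha_i}$ and
$f_i\in\fg_{-\alpha_i}$, so that the nonzero root spaces occurring are
indexed by weights in the $\Z$-span of $\{\dalp_i\}$. Moreover
$\fg_0\supseteq\fh$ because $[h_i,h_j]=0$.

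Next I would pin down which weights $\lambda$ actually occur and that
$\fg_0=\fh$. The key structural input is that $\fg$ is generated as a Lie
algebra by the $e_i,f_i$, so every element of $\fg$ is a linear combination
of iterated brackets of the generators; such a bracket lies in
$\fg_\lambda$ where $\lambda$ is the corresponding integral combination of
the $\dalp_i$. One computes that a bracket $[\,x_{\beta_1},[\cdots]\,]$
maps $z_\mu$ into a multiple of $z_{\mu+\gamma}$ where $\gamma$ is the sum
of the roots involved; by the minuscule hypothesis, on each $z_\mu$ the
operators $e_i,f_i$ shift the weight by $\pm\alpha_i$ and square to zero,
which tightly constrains the weights that survive. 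The aim is to show that
the only weights $\lambda$ for which $\fg_\lambda\neq 0$ and $\lambda\neq 0$
are exactly the $\dalp$ for $\alpha\in\Phi$, each with
$\dim\fg_\alpha=1$, and that $\fg_0=\fh$. For the containment
$\dPhi\subseteq\{\lambda:\fg_\lambda\neq0\}$ I would use that the $s_i$
permute $\Psi$ and that $\Phi=\{w(\alpha_i)\}$, together with the fact that
$\fg$ is invariant under the automorphisms realizing the $s_i$ (these will
be produced in Section~\ref{sec2}, but the weight-shift combinatorics
already forces each $\dalp$ to appear).

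The main obstacle, and the heart of the argument, is simplicity together
with the dimension count $\dim\fg_\alpha=1$ and $\fg_0=\fh$. I expect the
cleanest route is to invoke the Serre presentation: Lemma~\ref{lemj} shows
that $e_i,f_i,h_i$ satisfy exactly the Serre relations for the Cartan matrix
$A$ (relations (a), (b) give the $\slm_2$-triples and the commutation with
$\fh$, and the Serre relations (c) are verified there). Hence there is a
surjective homomorphism from the semisimple Lie algebra $\tilde\fg(A)$ with
root system $\Phi$ onto $\fg$. Since $A$ is the Cartan matrix of an
irreducible root system, $\tilde\fg(A)$ is simple, so this surjection is
either an isomorphism or the zero map; as the $h_i$ are linearly independent
and nonzero in $\fg$, it is nonzero, hence an isomorphism. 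This simultaneously
yields that $\fg$ is simple, that $\fh$ (the image of the Cartan subalgebra
of $\tilde\fg(A)$) is a Cartan subalgebra, that the root system is
$\dPhi$, and that each root space is one-dimensional with $\fg_0=\fh$. I
would close by remarking that the module $M$ is nonzero and the action is
faithful on $\fg$, so no collapse occurs, giving the asserted direct sum
decomposition $\fg=\fh\oplus\bigoplus_{\alpha\in\Phi}\fg_\alpha$.
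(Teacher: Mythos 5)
Your proof is correct, and its decisive step---invoking Serre's theorem to obtain a surjection from the abstractly presented semisimple Lie algebra $\tilde{\fg}(A)$ onto $\fg$, then using simplicity of $\tilde{\fg}(A)$ (from irreducibility of $\Phi$) together with the nonvanishing of the $h_i$ to conclude that this surjection is an isomorphism, whence all the structural claims follow from classical semisimple theory---is exactly the paper's own argument. The two preliminary paragraphs on weight-space decompositions and weight-shift combinatorics are harmless but redundant scaffolding: everything they aim to establish ($\fg_0=\fh$, $\dim\fg_\alpha=1$, and which weights occur) is delivered automatically by the isomorphism with $\tilde{\fg}(A)$, as you yourself observe at the end.
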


\begin{proof} Since the relations in Lemma~\ref{lemj} hold, Serre's Theorem 
\cite[\S 18.3]{H} shows that $e_i,f_i,h_i$ define a representation of a 
semisimple Lie algebra $\tilde{\fg}$ with root system isomorphic to 
$\Phi$. Since $\Phi$ is irreducible, $\tilde{\fg}$ is simple and so 
that representation must be injective. Hence, we obtain an isomorphism
$\tilde{\fg}\cong \fg$ under which the Cartan subalgebra of $\tilde{\fg}$
is mapped onto $\fh$.~---~Alternatively, one could also argue as in 
\cite[\S 4]{my}: We have $\Psi=\Psi_1\cup \ldots \cup \Psi_r$ where
the $\Psi_i$ are the $W$-orbits on $\Psi$. Then $M=M_1\oplus \ldots 
\oplus M_r$ where $M=\langle z_\mu\mid \mu\in\Psi_i\rangle_\C$, hence
$\fg\subseteq \mathfrak{sl}(M_1)\oplus \ldots \oplus \mathfrak{sl}(M_r)$.
Then one shows that each $M_i$ is a simple $\fg$-module and uses a general 
semisimplicity criterion for $\fg$; see \cite[\S 19.1]{H}. 

The statements about the direct sum decomposition of $\fg$ are classical
facts about semisimple Lie algebras; see, e.g., \cite[\S 8.4]{H}.
\end{proof}

\begin{rem} \label{tract} The map $\mu\mapsto \dmu$ ($\mu\in\Lambda$)
defines a linear isomorphism $E\stackrel{\sim}{\rightarrow} \fh^*$. Via 
this isomorphism, we now identify $E$ with $\fh^*$. Thus, $W \subseteq 
\GL(\fh^*)$ where 
\[s_i(\dmu)=\dmu-(\mu,\alpha_i^\vee) \dalp_i\qquad \mbox{for all 
$i\in I$ and $\mu\in \Lambda$}.\]
For $w\in W$, let $w^*\colon \fh \rightarrow \fh$ be the transposed map, 
that is, we have $\lambda \circ w^*=w\circ \lambda$ for all $\lambda \in 
\fh^*$. Let $i\in I$. Then a straightforward computation shows that 
\[\dmu(s_i^*(h_j))=(s_i(\dmu))(h_j)=\dmu(h_j-(\alpha_i,\alpha_j^\vee)h_i) 
\qquad \mbox{for $\mu\in \Lambda$, $j\in I$}.\]
Thus, the map $s_i^*\colon \fh\rightarrow \fh$ is given by $h_j\mapsto h_j-
(\alpha_i,\alpha_j^\vee)h_i$ ($j \in I$). 
\end{rem}

\begin{rem} \label{remhw} Assume that $\Psi$ is a single $W$-orbit
of a minuscule weight $\varpi_j$, where $j\in I$ is
one of the nodes marked ``$\circ$'' in Table~\ref{Mcoxgraphs}.
Then we have $e_i(z_{\varpi_j})=0$ and $h_i(z_{\varpi_j})=(\varpi_j,
\alpha_i^\vee)z_{\varpi_j}=\delta_{ij}z_{\varpi_j}$ for all $i\in I$. 
Thus, $z_{\varpi_j}\in M$ is a primitive vector, with corresponding 
weight~$\dot{\varpi}_j\in\fh^*$. Hence, $M$ is a highest weight module 
with highest weight $\dot{\varpi}_j$.  (See \cite[Chap.~VI]{H} for
these notions.) 
\end{rem}

\begin{prop}[G. Lusztig] \label{remhw1} Let $\Psi$ be as in 
Remark~\ref{remhw}. Then $\{z_\mu \mid \mu \in \Psi\}$ is the 
canonical basis of $M$ in the sense of \cite[\S 14.4]{L6}.
\end{prop}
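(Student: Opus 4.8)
The plan is to recognise $M$ as the specialisation at $v=1$ of the quantum minuscule module and then to verify directly the three conditions that characterise the canonical basis in \cite[\S 14.4]{L6}. Throughout write $\lambda=\varpi_j$, set $\mathcal{A}=\Z[v,v^{-1}]$, and let $U_v=U_v(\fg)$ be the quantized enveloping algebra with its standard bar involution $u\mapsto\bar u$ (semilinear, with $v\mapsto v^{-1}$ and $e_i\mapsto e_i$, $f_i\mapsto f_i$). The first step is to lift Definition~\ref{defj} to the quantum level: on the $\mathcal{A}$-module $V$ with basis $\{z_\mu\mid\mu\in\Psi\}$ one defines the $U_v$-action by exactly the formulae of Definition~\ref{defj} for $e_i,f_i$, together with $K_iz_\mu=v^{(\mu,\alpha_i^\vee)}z_\mu$. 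Because $\lambda$ is minuscule we have $(\mu,\alpha_i^\vee)\in\{0,\pm1\}$ for all $\mu\in\Psi$, so all divided powers $f_i^{(n)}$ with $n\geq 2$ act as zero and the relevant quantum integers equal $[1]=1$; hence these structure constants carry \emph{no} power of $v$. That this really defines a $U_v$-module (i.e.\ that the quantum Serre relations hold) is Jantzen's computation \cite[5A.1]{J}, of which Lemma~\ref{lemj} is the $v=1$ shadow; specialising $v\mapsto 1$ recovers $M$ together with its basis $\{z_\mu\}$.

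Next I would identify the integral form and the crystal data. By Remark~\ref{remhw}, $z_\lambda$ is a highest weight vector, and I claim that ${}_{\mathcal{A}}U_v^-\,z_\lambda=\bigoplus_{\mu\in\Psi}\mathcal{A}z_\mu$. The inclusion $\subseteq$ is clear from the formulae; for $\supseteq$ one shows that every $z_\mu$ is reachable from $z_\lambda$ by a monomial in the $f_i$. Concretely, if $\mu\neq\lambda$ then some $(\mu,\alpha_i^\vee)<0$, so $s_i(\mu)=\mu+\alpha_i\succ\mu$ lies strictly higher in the poset of Remark~\ref{rem10}; iterating and using the connectedness of the Dynkin diagram exactly as in the proof of Lemma~\ref{rem00a}, one produces indices $i_1,\dots,i_k$ with $f_{i_1}\cdots f_{i_k}z_\lambda=z_\mu$, the coefficient being $1$ by the previous paragraph. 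Since each weight space $V_\mu$ is one-dimensional, this monomial is independent of the chosen path. The same expression shows simultaneously that $z_\mu$ is bar-invariant: as $\bar z_\lambda=z_\lambda$ and each $f_i$ is bar-invariant, $\bar z_\mu=\overline{f_{i_1}\cdots f_{i_k}z_\lambda}=f_{i_1}\cdots f_{i_k}z_\lambda=z_\mu$.

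It then remains to match the crystal lattice. Put $\mathcal{L}=\bigoplus_{\mu\in\Psi}\mathcal{A}_0\,z_\mu$, where $\mathcal{A}_0\subseteq\Q(v)$ is the ring of rational functions regular at $v=0$. Because $f_iz_\mu\in\{0,z_{\mu-\alpha_i}\}$ and $e_iz_\mu\in\{0,z_{\mu+\alpha_i}\}$, the Kashiwara operators act on $\mathcal{L}/v\mathcal{L}$ by $\tilde f_i(z_\mu+v\mathcal{L})=z_{\mu-\alpha_i}+v\mathcal{L}$ (or $0$), so $(\mathcal{L},\{z_\mu+v\mathcal{L}\})$ is precisely the lower crystal basis of $M=V(\lambda)$. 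Thus $\{z_\mu\}$ is an $\mathcal{A}$-basis of ${}_{\mathcal{A}}U_v^-z_\lambda$ which is bar-invariant and congruent to the crystal basis modulo $v\mathcal{L}$; by the uniqueness statement of \cite[\S 14.4]{L6} it is therefore the canonical basis, and the assertion for $M$ follows after specialisation.

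I expect the one genuinely delicate point to be the assertion that the monomial $f_{i_1}\cdots f_{i_k}z_\lambda$ equals $z_\mu$ \emph{on the nose}, with neither a sign nor a power of~$v$ intervening. This is exactly where minusculeness is indispensable: it is what forces the divided powers to reduce to the plain $f_i$ and the quantum integers to collapse to $1$, and what (through the one-dimensionality of the weight spaces established in Proposition~\ref{propj}) guarantees that distinct reduced paths cannot produce incompatible scalars. Once this normalisation is secured, bar-invariance and the crystal congruence are immediate, and Lusztig's uniqueness theorem does the rest.
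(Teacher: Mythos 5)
Your proof is essentially correct, but it takes a genuinely different route from the paper. The paper's own argument is much shorter: for $\mu=w(\varpi_j)$ it forms the extremal vector $\theta_w(\varpi_j)=f_{i_1}^{a_1}\cdots f_{i_k}^{a_k}(z_{\varpi_j})$ attached to a reduced expression of $w$ (as in \cite[\S 28.1]{L6}), checks by a simple induction on $k$ that $\theta_w(\varpi_j)=z_\mu$, and then invokes \cite[Prop.~28.1.4]{L6}, which asserts that extremal weight vectors always lie in the canonical basis; since in the minuscule case \emph{every} weight of $M$ is extremal, the whole basis $\{z_\mu\}$ is the canonical basis by a counting argument. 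You instead re-derive what Prop.~28.1.4 gives for free: you lift Jantzen's module to the quantum level (which is legitimate --- \cite[5A.1]{J} is in fact stated for the quantum group, as the paper remarks), identify the $\mathcal{A}$-form, check bar-invariance of each $z_\mu$ via a monomial $f_{i_1}\cdots f_{i_k}z_\lambda=z_\mu$ with coefficient exactly $1$, verify that the obvious lattice is a crystal lattice, and conclude by the uniqueness characterization of the canonical (global) basis. Both proofs rest on the same two pillars --- the quantum module has the same integral structure constants, and minusculeness forces every weight vector to be reached from $z_\lambda$ with coefficient exactly $1$ --- but your version is more self-contained and makes the role of minusculeness transparent, at the cost of length and of two citation-level caveats: your lattice convention (regular at $v=0$, congruence mod $v\mathcal{L}$) is Kashiwara's, whereas \cite[\S 14.4]{L6} works with $\Z[v^{-1}]$-lattices and congruences mod $v^{-1}$, so either the conventions must be swapped or one should cite Kashiwara's global-basis uniqueness (together with its known agreement with Lusztig's definition); and the ``uniqueness statement'' you invoke is really assembled from Lusztig's almost-orthonormality characterization (\cite[14.2.3]{L6}) plus the properties established in \S 14.4, rather than being a single labelled theorem there. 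Neither caveat is a gap in substance; with the references straightened out, your argument stands as an elementary alternative to the paper's appeal to \cite[Prop.~28.1.4]{L6}.
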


\begin{proof} Let $\mu\in\Psi$ and write $\mu=w(\varpi_j)$ where $w\in W$.
We choose a reduced expression
$w=s_{i_1}\cdots s_{i_k}$ where $k\geq 0$ and $i_1,\ldots,i_k\in I$. Let 
\[\theta_w(\varpi_j):=f_{i_1}^{a_1}\cdots f_{i_k}^{a_k}(z_{\varpi_j})\in M\]
where $a_1:=(s_{i_2}\cdots s_{i_k}(\varpi_j),\alpha_{i_1}^\vee)$, 
$a_2:=(s_{i_3}\cdots s_{i_k}(\varpi_j),\alpha_{i_2}^\vee)$, $\ldots$, 
$a_k:=(\varpi_j,\alpha_{i_k}^\vee)$, as in \cite[\S 28.1]{L6}; note that
$a_1,\ldots,a_k\in \Z_{\geq 0}$ and $\theta_w(\varpi_j)$ only depends 
on $w,\varpi_j$. Now a simple induction on $k$ shows that $\theta_w
(\varpi_j)=z_{w(\varpi_j)}=z_\mu$. Hence, the fact that $z_\mu$ belongs to 
the canonical basis of $M$ is a very special case of \cite[Prop.~28.1.4]{L6}.
\end{proof}

%%%%%%%%%%%%%%%%%%%%%%%%%%%%%%%%%%%%%%%%%%%%%%%%%%%%%%%%%%%%%%%%%%%%%%%%
\section{Weyl group action} \label{sec2}

We keep the setting of the previous section. Our first aim is to 
``lift'' the induced action of a generator $s_i\in W$ on the Cartan 
subalgebra $\fh \subseteq \fg$ (see Remark~\ref{tract}) to a suitable
automorphism of $\fg$. This is well-known in the general theory of 
semisimple Lie algebras; see, e.g., \cite[Chap.~VIII, \S 2, Lemme~1]{B}. 
We show that in our setting, these automorphisms take a 
particularly simple form.

Let $i\in I$. By Remark~\ref{rem10}, we have $e_i^2=0$ and so $\mbox{id}_M
+ te_i\in \GL(M)$ for any $t\in\C$. Similarly, $f_i^2=0$ and so 
$\mbox{id}_M+tf_i\in \GL(M)$ for any $t\in\C$. We set 
\[n_i(t):=(\mbox{id}_M+te_i)(\mbox{id}_M-t^{-1}f_i)(\mbox{id}_M+te_i) 
\in\GL(M) \quad \mbox{where $0\neq t\in\C$}.\]
Let $x\in\fg$. Then an easy computation shows that $[e_i,[e_i,x]]=
-2e_ixe_i$ and 
\begin{center}
$(\mbox{id}_M+te_i)x(\mbox{id}_M+te_i)^{-1}=x+t[e_i,x]+\frac{1}{2}t^2
[e_i,[e_i,x]]\in \fg$.
\end{center}
Now note that conjugation with any element of $\GL(M)$ defines a Lie 
algebra automorphism of $\gl(M)$. The above formula shows that conjugation
with $\mbox{id}_M+te_i\in\GL(M)$ restricts to a Lie algebra automorphism 
of $\fg$. A similar statement holds for conjugation with $\mbox{id}_M+tf_i$
and, hence, also for $n_i(t)$; thus, $n_i(t)\fg n_i(t)^{-1}\subseteq \fg$. 

\begin{lem} \label{lemni} Let $i\in I$ and $0\neq t\in\C$. Then
$n_i(t)^{-1}=n_i(-t)$ and 
\[ n_i(t)(z_\mu)=\left\{\begin{array}{cl} z_\mu & \quad 
\mbox{if $(\mu, \alpha_i^\vee)=0$},\\ -t^{-1}z_{\mu-\alpha_i} & 
\quad \mbox{if $(\mu,\alpha_i^\vee)=1$},\\ tz_{\mu+\alpha_i} & 
\quad \mbox{if $(\mu, \alpha_i^\vee)=-1$}. \end{array}\right.\]
Setting $n_i:=n_i(1)$, we have $n_i(z_\mu)=\pm z_{s_i(\mu)}$ 
and $n_i^2(z_\mu)=(-1)^{(\mu,\alpha_i^\vee)} z_\mu$. Note that
$n_i$ is represented by a monomial matrix with non-zero entries 
equal to $\pm 1$.
\end{lem}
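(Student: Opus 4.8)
The plan is to reduce everything to the explicit formula for $n_i(t)(z_\mu)$ in the three cases $(\mu,\alpha_i^\vee)\in\{0,\pm1\}$; once that formula is in place, the remaining assertions (the identity $n_i(t)^{-1}=n_i(-t)$, the description of $n_i$, the value of $n_i^2$, and the monomial shape) all follow by short direct arguments. The only ingredients needed are the action of $e_i,f_i$ from Definition~\ref{defj}, the identities $e_i^2=f_i^2=0$ from Remark~\ref{rem10}, and the minuscule constraint $(\mu,\alpha_i^\vee)\in\{0,\pm1\}$, which holds because $\Psi$ is a union of $W$-orbits of minuscule weights.

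First I would compute $n_i(t)(z_\mu)$ by applying the three factors of $n_i(t)=(\mbox{id}_M+te_i)(\mbox{id}_M-t^{-1}f_i)(\mbox{id}_M+te_i)$ to $z_\mu$ from right to left, splitting into the three cases. The mechanism driving the computation is the elementary shift $(\mu\pm\alpha_i,\alpha_i^\vee)=(\mu,\alpha_i^\vee)\pm2$ (coming from $(\alpha_i,\alpha_i^\vee)=2$), which controls whether the next factor acts nontrivially. If $(\mu,\alpha_i^\vee)=0$, all three factors fix $z_\mu$, giving $z_\mu$. If $(\mu,\alpha_i^\vee)=1$, the first factor fixes $z_\mu$, the middle factor produces $z_\mu-t^{-1}z_{\mu-\alpha_i}$, and the last factor acts via $e_i(z_{\mu-\alpha_i})=z_\mu$ (since $(\mu-\alpha_i,\alpha_i^\vee)=-1$); the two $z_\mu$ contributions cancel, leaving $-t^{-1}z_{\mu-\alpha_i}$. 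The case $(\mu,\alpha_i^\vee)=-1$ is the mirror image, using $f_i(z_{\mu+\alpha_i})=z_\mu$, and yields $tz_{\mu+\alpha_i}$.

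With the formula established, the other claims are immediate. For the inverse I would avoid recomputing and instead note that $e_i^2=f_i^2=0$ give $(\mbox{id}_M+te_i)^{-1}=\mbox{id}_M-te_i$ and $(\mbox{id}_M-t^{-1}f_i)^{-1}=\mbox{id}_M+t^{-1}f_i$; reversing the three-factor product then produces exactly $n_i(-t)$, so $n_i(t)^{-1}=n_i(-t)$. Setting $t=1$ and comparing with $s_i(\mu)=\mu-(\mu,\alpha_i^\vee)\alpha_i$ shows $n_i(z_\mu)=\varepsilon_\mu\,z_{s_i(\mu)}$, where $\varepsilon_\mu=-1$ exactly when $(\mu,\alpha_i^\vee)=1$ and $\varepsilon_\mu=+1$ otherwise; since each basis vector is sent to $\pm$ a basis vector, $n_i$ is monomial with non-zero entries $\pm1$. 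For the square I would use $n_i^2(z_\mu)=\varepsilon_\mu\,\varepsilon_{s_i(\mu)}\,z_\mu$ together with $(s_i(\mu),\alpha_i^\vee)=-(\mu,\alpha_i^\vee)$; running through the three cases shows $\varepsilon_\mu\,\varepsilon_{s_i(\mu)}=(-1)^{(\mu,\alpha_i^\vee)}$, as required.

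There is no serious obstacle here: the proof is a bounded case-by-case calculation. The only points requiring care are the bookkeeping, namely tracking the value of $(\mu',\alpha_i^\vee)$ after each shift so as to know which factors act nontrivially, and handling the $t^{-1}$ factors and signs so that the middle cancellation comes out exactly; and the implicit use, guaranteed by the well-definedness remark in Definition~\ref{defj}, that $\mu+\alpha_i=s_i(\mu)\in\Psi$ when $(\mu,\alpha_i^\vee)=-1$ and $\mu-\alpha_i=s_i(\mu)\in\Psi$ when $(\mu,\alpha_i^\vee)=1$, so that the symbols $z_{\mu\pm\alpha_i}$ denote genuine basis vectors of $M$.
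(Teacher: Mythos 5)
Your proof is correct and follows essentially the same route as the paper: a direct case-by-case evaluation of the triple product on $z_\mu$ using $e_i^2=f_i^2=0$ and the shift $(\mu\pm\alpha_i,\alpha_i^\vee)=(\mu,\alpha_i^\vee)\pm 2$, with the inverse formula obtained by reversing the factors. The only cosmetic difference is that the paper first expands $n_i(t)$ into a sum of operator terms before splitting into cases, and it leaves the $n_i^2$ computation implicit where you spell out the sign bookkeeping $\varepsilon_\mu\varepsilon_{s_i(\mu)}=(-1)^{(\mu,\alpha_i^\vee)}$.
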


\begin{proof} The formula for $n_i(t)^{-1}$ is clear; just recall that
$e_i^2=f_i^2=0$. Now let $\mu\in\Psi$. A straightforward computation 
yields that
\[ n_i(t)(z_\mu)=z_\mu+2te_i(z_\mu)-t^{-1}f_i(z_\mu)-e_if_i(z_\mu)- 
f_ie_i(z_\mu)-te_if_ie_i (z_\mu).\]
If $(\mu,\alpha_i^\vee)=0$, then this immediately shows that $n_i(t)
(z_\mu)=z_\mu$. Now assume that $(\mu,\alpha_i^\vee)=1$. Then
$e_i(z_\mu)=0$ and $f_i(z_\mu)=z_{\mu-\alpha_i}$, hence
the above expression simplifies to $n_i(t)(z_\mu)=z_\mu-t^{-1}
z_{\mu-\alpha_i}-e_i(z_{\mu-\alpha_i})$. We have $(\mu-\alpha_i,
\alpha_i^\vee)=-1$ and so $e_i(z_{\mu-\alpha_i})=z_\mu$. This
yields $n_i(t)(z_\mu)=-t^{-1}z_{\mu-\alpha_i}$, as claimed.
Finally, assume that $(\mu,\alpha_i^\vee)=-1$. Then $f_i(z_\mu)=0$
and $e_i(z_\mu)=z_{\mu+\alpha_i}$, hence the above expression
simplifies to $n_i(t)(z_\mu)=z_\mu+2tz_{\mu+\alpha_i}-
f_i(z_{\mu+\alpha_i})- te_if_i(z_{\mu+\alpha_i})$. Now
$(\mu+\alpha_i,\alpha_i^\vee)=1$ and so $f_i(z_{\mu+\alpha_i})=
z_\mu$. This yields $n_i(t)(z_\mu)= tz_{\mu+\alpha_i}$,
as claimed. The formula for $n_i^2$ is an immediate consequence.
\end{proof}

The following result about braid relations can be found in 
\cite[Lemma~56 (p.~149)]{St}, as a {\it consequence} of the main 
structural properties of Chevalley groups (e.g., the $BN$-pair axioms). 
Here, we can prove it directly based on the explicit formulae in 
Lemma~\ref{lemni} (see also \cite[2.4]{L3} and \cite[Prop.~9.3.2]{Sp}). 
In our setting, the braid relations will then be a useful tool in the 
discussion in Section~\ref{sec3}. 

\begin{prop} \label{braidr} The elements $n_i$ ($i\in I$)
satisfy the {\it braid relations}: let $i,j\in I$, $i\neq j$, and 
$m\geq 2$ be the order of $s_is_j\in W$. Then 
\[n_in_jn_i\cdots=n_jn_in_j \cdots\qquad \mbox{(with $m$ factors on 
both sides)}.\]
\end{prop}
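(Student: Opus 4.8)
The plan is to exploit that each $n_i$ is a monomial matrix. By Lemma~\ref{lemni}, for every $\mu\in\Psi$ we may write
$n_i(z_\mu)=\varepsilon_i(\mu)\,z_{s_i(\mu)}$, where $\varepsilon_i(\mu)=-1$ if $(\mu,\alpha_i^\vee)=1$ and $\varepsilon_i(\mu)=+1$ otherwise. Consequently both alternating words $n_in_jn_i\cdots$ and $n_jn_in_j\cdots$ (with $m$ factors each) send a given $z_\mu$ to $\pm z_{u(\mu)}$, where $u$ is the corresponding alternating product $s_is_j\cdots=s_js_i\cdots$ in $W$; these two products of simple reflections agree because $(W,S)$ is a Coxeter system and hence already satisfies the braid relations, their common value being the longest element of the dihedral subgroup $\langle s_i,s_j\rangle$ (note $u(\mu)\in\Psi$ since $\Psi$ is $W$-stable, so $z_{u(\mu)}$ makes sense). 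Thus the two sides of the asserted identity agree up to a sign on each basis vector $z_\mu$, and the whole problem reduces to checking that these signs coincide for every $\mu\in\Psi$.

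Next I would reduce the sign computation to rank two. The sign contributed at each step of such a word depends, via Lemma~\ref{lemni}, only on the pairing of the current (intermediate) weight with $\alpha_i^\vee$ or $\alpha_j^\vee$, and all intermediate weights differ from $\mu$ by a $\Z$-combination of $\alpha_i$ and $\alpha_j$ alone; since $(\nu,\alpha_i^\vee)$ for $\nu=\mu+c_i\alpha_i+c_j\alpha_j$ equals $(\mu,\alpha_i^\vee)+2c_i+c_ja_{ij}$, the whole computation is a function of the pair $\bigl((\mu,\alpha_i^\vee),(\mu,\alpha_j^\vee)\bigr)\in\{0,\pm1\}^2$ and of the Cartan integers $a_{ij},a_{ji}$, i.e. it takes place entirely inside the rank-two subsystem $\Phi\cap(\Q\alpha_i+\Q\alpha_j)$. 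Because $\Psi$ consists of minuscule weights, Table~\ref{Mcoxgraphs} contains no triple bond, so this subsystem is of type $A_1\times A_1$, $A_2$, or $B_2$ and correspondingly $m\in\{2,3,4\}$; the case $G_2$ (with $m=6$) does not arise. It therefore suffices to verify the sign equality in these three finite situations.

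For $m=2$ we have $(\alpha_i,\alpha_j^\vee)=(\alpha_j,\alpha_i^\vee)=0$, whence $(s_j(\mu),\alpha_i^\vee)=(\mu,\alpha_i^\vee)$ and $(s_i(\mu),\alpha_j^\vee)=(\mu,\alpha_j^\vee)$; a one-line computation then gives $\varepsilon_j(\mu)\varepsilon_i(s_j(\mu))=\varepsilon_i(\mu)\varepsilon_j(s_i(\mu))=\varepsilon_i(\mu)\varepsilon_j(\mu)$, so $n_in_j=n_jn_i$. For $m=3$ and $m=4$ I would run through the finitely many admissible pairs $\bigl((\mu,\alpha_i^\vee),(\mu,\alpha_j^\vee)\bigr)$, namely those compatible with minuscularity — for instance, in the $A_2$ case $\alpha_i+\alpha_j$ is again a (short) root with $(\alpha_i+\alpha_j)^\vee=\alpha_i^\vee+\alpha_j^\vee$, so $(\mu,\alpha_i^\vee)+(\mu,\alpha_j^\vee)\in\{0,\pm1\}$, which already excludes $(1,1)$ and $(-1,-1)$ — and in each case trace the chain of intermediate weights, reading off the sign at every step from Lemma~\ref{lemni} and confirming that the two alternating words accumulate the same total sign. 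I expect the main obstacle to be precisely this bookkeeping in the $m=4$ ($B_2$) case, where the chain is longest and one must respect the short/long asymmetry ($a_{ij}=-2$, $a_{ji}=-1$); there is no conceptual difficulty, only the need to organize the cases so that no admissible weight is overlooked.
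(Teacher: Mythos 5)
Your overall strategy is sound and is essentially the paper's own: both arguments note that $m\in\{2,3,4\}$ because the diagrams admitting minuscule weights have no triple bond, both dispose of $m=2$ directly, and both reduce $m\in\{3,4\}$ to a finite rank-two verification --- the paper by decomposing $M$ into the submodules $M_\cO$ spanned by the $W'$-orbits $\cO\subseteq\Psi$, $W'=\langle s_i,s_j\rangle$, you by observing that the accumulated sign depends only on the data $\bigl((\mu,\alpha_i^\vee),(\mu,\alpha_j^\vee)\bigr)$ and the Cartan integers. Your $m=2$ computation is correct (the paper argues instead via $[e_i,e_j]=[f_i,f_j]=[e_i,f_j]=0$, but your sign argument works equally well), and your reduction of the general identity to a sign comparison is valid, since the permutation parts of the two monomial words agree for the trivial reason you give.

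The gap is that for $m=3$ and $m=4$ you never actually carry out the sign check: you describe the procedure (``I would run through the finitely many admissible pairs \ldots{} confirming that the two alternating words accumulate the same total sign'') and assert the outcome. But the equality of those signs \emph{is} the entire content of the proposition in these cases, so the proof is incomplete at precisely its only nontrivial point. (The check does succeed --- e.g.\ in type $A_2$ the admissible pairs are the seven with $a+b\in\{0,\pm1\}$, in type $B_2$ the five with $a+b,\,a+2b\in\{0,\pm1\}$ --- so nothing in your plan would fail; it simply has to be executed.) It is worth seeing how the paper tames exactly the bookkeeping you flag as the main obstacle: it first proves (the condition called $(*^\prime)$ in its proof) that every $W'$-orbit on which $n_i,n_j$ do not both act trivially contains a weight $\mu$ with $(\mu,\alpha_i^\vee)=\varepsilon=\pm1$ and $(\mu,\alpha_j^\vee)=0$, after normalizing $a_{ji}=-1$. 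Such an orbit then has exactly $m$ elements $\mu,\,s_i(\mu),\,s_js_i(\mu)\,(,s_is_js_i(\mu))$, and $n_i,n_j$ act on $M_\cO$ by explicit $3\times3$ (resp.\ $4\times4$) monomial matrices depending only on $\varepsilon$, so each case becomes a single matrix multiplication instead of a chain-trace over all admissible starting pairs. Adopting that normalization would collapse your case list to one computation for $m=3$ and one for $m=4$, and would complete your argument.
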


\begin{proof} Since the Dynkin diagram of $\Phi$ is as in 
Table~\ref{Mcoxgraphs}, we have $m\in\{2,3,4\}$. Let us first assume
that $m=2$. Then $a_{ij}=(\alpha_j,\alpha_i^\vee)=0$ and so $[e_i,e_j]=
[f_i,f_j]=0$ by Lemma~\ref{lemj}(c). Since we also have $[e_i,f_j]=
[f_i,e_j]=0$ by Lemma~\ref{lemj}(b), the defining formula for $n_i,
n_j$ immediately shows that $n_in_j=n_jn_i$, as required. 

Now assume that $m\in\{3,4\}$. If $m=3$, then $a_{ij}=a_{ji}=-1$; if $m=4$, 
then $\{a_{ij},a_{ji}\}=\{-1,-2\}$ and we choose the notation such that
$a_{ji}=-1$. Hence, in both cases, $a_{ji}=-1$ and so 
\begin{equation*}
s_i(\alpha_j^\vee)=\alpha_j^\vee-(\alpha_j^\vee,\alpha_i^\vee)\alpha_i=
\alpha_j^\vee-a_{ji}\alpha_i^\vee=\alpha_i^\vee+\alpha_j^\vee.\tag{$*$}
\end{equation*}
Now let $W'=\langle s_i,s_j \rangle \subseteq W$; then $W'$ is a dihedral 
group of order $6$ or~$8$. Since $\Psi$ is a union of $W$-orbits on 
$\Lambda$, we can decompose $\Psi$ as a union of $W'$-orbits. 
Correspondingly, we have $M=\bigoplus_\cO M_\cO$ where $\cO$ runs over 
the $W'$-orbits on $\Psi$ and $M_\cO:=\langle z_\mu\mid \mu\in\cO
\rangle_\C$. By the formulae in Lemma~\ref{lemni}, it is clear that
$n_i(M_\cO)\subseteq M_\cO$ and $n_j(M_\cO)\subseteq M_\cO$, so it is 
enough to prove the desired identity upon restriction to~$M_\cO$, for 
any~$\cO$.  Now let us fix such a $W'$-orbit $\cO\subseteq \Psi$. 
If $\mu\in\cO$ is such that $(\mu,\alpha_i^\vee)=(\mu,\alpha_j^\vee)=0$,
then $\cO=\{\mu\}$, $n_i(z_\mu)=n_j(z_\mu)=z_\mu$ and so the desired
identity is clear on $M_\cO$. So we can now assume that $(\mu,
\alpha_i^\vee)\neq 0$ or $(\mu,\alpha_j^\vee)\neq 0$, for all $\mu \in \cO$.
We claim that there is some $\mu\in \cO$ such that 
\begin{equation*}
\varepsilon:=(\mu,\alpha_i^\vee)=\pm 1 \quad \mbox{and}\quad 
(\mu,\alpha_j^\vee)=0 \quad \mbox{(and still $a_{ji}=(\alpha_i,
\alpha_j^\vee)=-1$)}. \tag{$*^\prime$}
\end{equation*}
This is seen as follows. Assume that $\nu \in \cO$ is such that $(\nu,
\alpha_j^\vee)\neq 0$. If we also have $(\nu,\alpha_i^\vee)\neq 0$, then 
$(s_i(\nu), \alpha_j^\vee)=(\nu,s_i(\alpha_j^\vee))=(\nu,\alpha_j^\vee)+(\nu,
\alpha_i^\vee)$, using ($*$). Since the left hand side is in $\{0,\pm 1\}$, 
the two terms $(\nu,\alpha_i^\vee)$ and $(\nu, \alpha_j^\vee)$ can not 
be equal. Since they both are $\pm 1$, we conclude that $(\mu,
\alpha_j^\vee)=0$ for $\mu:=s_i(\nu)\in \cO$. On the other hand,
assume that $(\nu,\alpha_i^\vee)=0$. If $m=3$, then we can simply exchange 
the roles of $n_i$ and $n_j$; if $m=4$, then $(s_j(\nu),\alpha_i^\vee)=
(\nu,\alpha_i^\vee)+2(\nu,\alpha_j^\vee)=2(\nu,\alpha_j^\vee)=\pm 2$, a 
contradiction. Thus, ($*^\prime$) is proved. 

Now, if $m=3$, then $\cO=\{\mu,s_i(\mu),s_js_i(\mu)\}$. Using ($*^\prime$)
and the formulae in Lemma~\ref{lemni}, it is straightforward to determine 
the action of $n_i,n_j$ on $M_\cO$; the matrices with respect to the basis
$\{z_\mu,z_{s_i(\mu)},z_{s_js_i(\mu)}\}$ of $M_\cO$ are given by 
\[n_i\colon \left(\begin{array}{ccc} 0 & \varepsilon & 0 \\ -\varepsilon 
& 0 & 0 \\ 0 & 0 & 1 \end{array}\right), \qquad n_j\colon \left(
\begin{array}{ccc} 1 & 0 & 0 \\ 0 & 0 & \varepsilon\\ 0 & -\varepsilon & 0
\end{array}\right).\]
(Note that, in addition to ($*$), we have $s_j(\alpha_i^\vee)=
\alpha_i^\vee-a_{ij}\alpha_j^\vee=\alpha_i^\vee+\alpha_j^\vee$ in this case.)
Then it is a matter of a simple matrix multiplication to check that $n_in_j
n_i= n_jn_in_j$ on $M_\cO$. Similarly, if $m=4$, then $\cO=\{\mu,s_i(\mu),
s_js_i(\mu),s_is_j s_i(\mu)\}$ and we find the following matrices for the
action of $n_i,n_j$ on $M_\cO$:
\[n_i\colon \left(\begin{array}{cccc} 0 & \varepsilon & 0 & 0\\ -\varepsilon 
& 0 & 0 & 0 \\ 0 & 0 & 0 & \varepsilon \\0 & 0 & -\varepsilon & 0
\end{array}\right), \qquad n_j\colon \left(\begin{array}{cccc} 1 & 0 & 0 & 0 
\\ 0 & 0 & \varepsilon & 0 \\ 0 & -\varepsilon & 0 & 0 \\ 0 & 0 & 0 & 1
\end{array}\right).\]
(Note that, in addition to ($*$), $s_j(\alpha_i^\vee)=\alpha_i^\vee-
a_{ij}\alpha_j^\vee=\alpha_i^\vee+2\alpha_j^\vee$ in this case.)
Again, by a simple verification, one checks that $n_in_jn_in_j=
n_jn_in_jn_i$ on $M_\cO$. 
\end{proof}

%\begin{lem} \label{lemni1} {\rm (a)} We have $n_ie_in_i^{-1}=-f_i$
%and $n_if_in_i^{-1}=-e_i$ for all $i\in I$. 
%
%{\rm (b)} We have $n_i^{-1}h_jn_i=s_i^*(h_j)$ 
%for all $i,j\in I$. 
%
%{\rm (c)} We have $n_i\fg_{\alpha}n_i^{-1}=\fg_{s_i(\alpha)}$ for all 
%$i\in I$ and $\alpha\in \Phi$. 
%\end{lem}

\begin{lem} \label{lemni1} 
{\rm (a)} We have $n_i^{-1}h_jn_i=s_i^*(h_j)$ 
for all $i,j\in I$. 

{\rm (b)} We have $n_i\fg_{\alpha}n_i^{-1}=\fg_{s_i(\alpha)}$ for all 
$i\in I$ and $\alpha\in \Phi$. 
\end{lem}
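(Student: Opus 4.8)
The plan is to prove (a) by a direct computation on the basis $\{z_\mu\mid \mu\in\Psi\}$, and then to deduce (b) by a formal weight-space argument. For (a), I would first record, from Lemma~\ref{lemni}, that $n_i(z_\mu)=\varepsilon_\mu z_{s_i(\mu)}$ for a sign $\varepsilon_\mu\in\{\pm 1\}$ determined by $(\mu,\alpha_i^\vee)$, and that consequently $n_i^{-1}(z_{s_i(\mu)})=\varepsilon_\mu z_\mu$ (using $\varepsilon_\mu^2=1$). Applying $h_j$ in between and using $h_j(z_\nu)=(\nu,\alpha_j^\vee)z_\nu$, the two signs cancel and one obtains
\[ n_i^{-1}h_jn_i(z_\mu)=(s_i(\mu),\alpha_j^\vee)\,z_\mu. \]
Expanding $s_i(\mu)=\mu-(\mu,\alpha_i^\vee)\alpha_i$ gives $(s_i(\mu),\alpha_j^\vee)=(\mu,\alpha_j^\vee)-(\mu,\alpha_i^\vee)(\alpha_i,\alpha_j^\vee)$, which is exactly the eigenvalue of $s_i^*(h_j)=h_j-(\alpha_i,\alpha_j^\vee)h_i$ on $z_\mu$ according to Remark~\ref{tract}. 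Since this holds for every $\mu\in\Psi$, we conclude $n_i^{-1}h_jn_i=s_i^*(h_j)$.

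For (b), I would first observe that, since the $h_j$ span $\fh$, part (a) shows that conjugation by $n_i^{-1}$ sends $\fh$ into $\fh$ and restricts there to the transpose map $s_i^*$. Recall from the discussion preceding Lemma~\ref{lemni} that conjugation by $n_i$ is a Lie algebra automorphism of $\fg$. Thus, for $x\in\fg_\alpha$ and any $h\in\fh$,
\[ [h,n_ixn_i^{-1}]=n_i\,[n_i^{-1}hn_i,\,x]\,n_i^{-1}=n_i\,[s_i^*(h),x]\,n_i^{-1}=\dalp(s_i^*(h))\,n_ixn_i^{-1}, \]
the last equality because $x\in\fg_\alpha$. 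It then remains to identify the scalar: by the transpose relation $\dalp\circ s_i^*=s_i(\dalp)$ of Remark~\ref{tract} together with $s_i(\dalp)=\dbet$ (where $\beta:=s_i(\alpha)$, using that $s_i(\dmu)=\dot{(s_i(\mu))}$ under the identification $E\cong\fh^*$), we get $\dalp(s_i^*(h))=\dbet(h)$. Hence $[h,n_ixn_i^{-1}]=\dbet(h)\,n_ixn_i^{-1}$ for all $h\in\fh$, i.e.\ $n_ixn_i^{-1}\in\fg_\beta=\fg_{s_i(\alpha)}$. This proves $n_i\fg_\alpha n_i^{-1}\subseteq\fg_{s_i(\alpha)}$.

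Finally, to upgrade this inclusion to an equality, I would invoke Proposition~\ref{propj}: both $\fg_\alpha$ and $\fg_{s_i(\alpha)}$ are one-dimensional, and since conjugation by $n_i$ is invertible, the one-dimensional space $n_i\fg_\alpha n_i^{-1}$ must coincide with $\fg_{s_i(\alpha)}$.

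The computational heart --- and the only place where care is genuinely needed --- is part (a): one must track the signs $\varepsilon_\mu$ and verify that they cancel, and correctly match $(s_i(\mu),\alpha_j^\vee)$ with the action of $s_i^*(h_j)$ from Remark~\ref{tract}. Once (a) is in hand, part (b) is a purely formal eigenspace argument, so I do not anticipate any real obstacle there beyond the bookkeeping of the identification $E\cong\fh^*$.
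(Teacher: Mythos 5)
Your proposal is correct and follows essentially the same route as the paper: part (a) by direct computation on the basis $\{z_\mu\}$ using the formulae of Lemma~\ref{lemni} (the paper does the sign cancellation by a three-case check where you package it as $\varepsilon_\mu^2=1$, but it is the same computation), and part (b) by exactly the same eigenvalue argument via $[h_j,n_ixn_i^{-1}]=(s_i(\dalp))(h_j)\,n_ixn_i^{-1}$. Your explicit appeal to $\dim\fg_\alpha=1$ to upgrade the inclusion to an equality is a point the paper leaves implicit, but this is a cosmetic difference, not a different approach.
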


%(a) Let $\mu\in\Psi$. Then
%\[n_ie_in_i^{-1}(z_\mu)=\left\{\begin{array}{ll} 
%n_ie_i(z_\mu)=0 &\quad \mbox{if $(\mu,\alpha_i^\vee)=0$},\\
%n_ie_i(z_{\mu-\alpha_i})=n_i(z_\mu)=-z_{\mu-\alpha_i} &\quad 
%\mbox{if $(\mu,\alpha_i^\vee)=1$},\\ -n_ie_i(z_{\mu+\alpha_i})=0
%&\quad \mbox{if $(\mu,\alpha_i^\vee)=-1$}.  \end{array}\right.\]
%Hence, indeed, $n_ie_in_i^{-1}=-f_i$. The argument for $n_if_in_i^{-1}$
%is completely analogous. 

\begin{proof} (a) By Lemma~\ref{lemni}, we have for $\mu\in \Psi$:
\[ h_jn_i(z_\mu)=\left\{\begin{array}{cl} (\mu,\alpha_j^\vee)z_\mu
& \quad \mbox{if $(\mu,\alpha_i^\vee)=0$},\\ -(\mu-\alpha_i,\alpha_j^\vee)
z_{\mu-\alpha_i}  & \quad \mbox{if $(\mu,\alpha_i^\vee)=1$},\\ 
(\mu+\alpha_i,\alpha_j^\vee)z_{\mu+\alpha_i} & \quad \mbox{if $(\mu,
\alpha_i^\vee)=-1$}.  \end{array}\right.\]
If $(\mu,\alpha_i^\vee)=0$, then $n_i^{-1}(z_\mu)=z_\mu$. If $(\mu,
\alpha_i^\vee)=1$, then $(\mu-\alpha_i,\alpha_i^\vee)=-1$ and so 
$n_i^{-1}(z_{\mu-\alpha_i})=-z_\mu$. If $(\mu,\alpha_i^\vee)=-1$,
then $(\mu+\alpha_i,\alpha_i^\vee)=1$ and so $n_i^{-1}(z_{\mu+\alpha_i})=
z_\mu$. Hence, we obtain $n_i^{-1}h_j n_i(z_\mu)=(s_i(\mu),\alpha_j^\vee)
z_\mu= h_j(z_\mu)-(\alpha_i, \alpha_j^\vee)h_i(z_\mu)$
and so $n_i^{-1}h_j n_i=h_j-(\alpha_i,\alpha_j^\vee)h_i=s_i^*(h_j)$;
see Remark~\ref{tract}.

(b) Let $x\in \fg_\alpha$; then $[h_j,x]=\dalp(h_j)x$ 
for all $j\in I$. Then, using (a), we obtain
\[[h_j,n_ixn_i^{-1}]=n_i[s_i^*(h_j),x]n_i^{-1}=\dalp(s_i^*(h_j))
n_ixn_i^{-1}=(s_i(\dalp))(h_j)n_ixn_i^{-1}.\]
Since $n_i\fg n_i^{-1}\subseteq \fg$, we conclude that $n_ixn_i^{-1}\in 
\fg_{s_i(\dalp)}=\fg_{s_i(\alpha)}$.
\end{proof}

Let $k\geq 0$ and $i,i_1\ldots, i_k\in I$. Let $\alpha:=s_{i_1}
\cdots s_{i_k}(\alpha_i)\in \Phi$. Then Lemma~\ref{lemni1} and a simple
induction on $k$ show that
\[n_{i_1}\dots n_{i_k}e_in_{i_k}^{-1}\cdots n_{i_1}^{-1}\in\fg_\alpha.\]
If we denote this element by $\be_\alpha$, then the formula in 
Definition~\ref{defj} translates to 
\[ \be_\alpha(z_\mu)=\left\{\begin{array}{cl} \pm z_{\mu+\alpha} & \quad 
\mbox{if $(\mu,\alpha^\vee)=-1$},\\0 & \quad \mbox{otherwise}.
\end{array}\right.\]
(Indeed, using Lemma~\ref{lemni}, we obtain 
$n_{i_k}^{-1}\cdots n_{i_1}^{-1}(z_\mu)=\pm z_{s_{i_k}\cdots s_{i_1}
(\mu)}=\pm z_{w^{-1}(\mu)}$.
Now note that $(\mu,\alpha^\vee)=(\mu,(w(\alpha_i))^\vee)=(\mu,
w(\alpha_i^\vee))=(w^{-1}, \alpha_i^\vee)$; see Remark~\ref{rem00}. 
Hence, if $(\mu,\alpha^\vee) \neq -1$, then $(w^{-1}(\mu),
\alpha_i^\vee)\neq -1$ and so $e_i(z_{w^{-1} (\mu)})=0$, which implies 
that $\be_\alpha(z_\mu)=0$. On the other hand, if $(\mu,\alpha^\vee)=-1$, 
then $(w^{-1}(\mu),\alpha_i^\vee)=-1$ and so $e_i(z_{w^{-1}(\mu)})=
z_{w^{-1}(\mu)+\alpha_i}$, which implies that
\begin{align*}
\be_\alpha(z_\mu)&=\pm n_{i_1}\cdots n_{i_k}(z_{w^{-1}(\mu)+\alpha_i})=
\pm z_{s_{i_1}\cdots s_{i_k}w^{-1}(\mu)+s_{i_1}\cdots s_{i_k}(\alpha_i)}
=\pm z_{\mu+\alpha},
\end{align*}
as required.) We note that, in each row and in each column of 
$\be_\alpha$, there is at most one non-zero entry (which then is $\pm 1$).
Since $\dim \fg_\alpha=1$, we conclude that $\be_\alpha$ is well-defined
up to a sign, that is, if $l\geq 0$ and $j,j_1,\ldots,j_l\in I$ are also 
such that $\alpha=s_{j_1} \cdots s_{j_l}(\alpha_j)$, then
$n_{i_1}\dots n_{i_k}e_in_{i_k}^{-1}\cdots n_{i_1}^{-1} =\pm 
n_{j_1}\dots n_{j_l}e_jn_{j_l}^{-1}\cdots n_{j_1}^{-1}$.

\begin{defn} \label{choose} Let us choose, for each $\alpha\in\Phi$,
a sequence $i,i_1,\ldots,i_k\in I$ as above such that 
$\alpha=s_{i_1} \cdots s_{i_k}(\alpha_i)\in \Phi$, and set 
\[\be_\alpha:=n_{i_1}\dots n_{i_k}e_in_{i_k}^{-1}\cdots
n_{i_1}^{-1}\in\fg_\alpha.\]
(As discussed, $\be_\alpha$ is well-defined up to a sign; in order to fix
these signs, one could use the ``canonical'' 
Chevalley bases in \cite[\S 5]{my}.) Since $e_i^2=0$ and $\be_\alpha$ is
conjugate to~$e_i$, we have $\be_\alpha^2=0$. Also note that 
$\be_\alpha=\pm e_j$, $\be_{-\alpha}=\pm f_j$ if $\alpha= \alpha_j$ with 
$j\in I$. 
\end{defn}

The rather explicit form of the elements $\be_\alpha$ allows us to 
determine some relations among them, at least up to a sign. The following 
result will be useful in the proof of Chevalley's commutator relations 
in Section~\ref{sec3}.

\begin{prop} \label{comm1} Let $\alpha,\beta\in \Phi$, $\beta\neq\pm 
\alpha$. Then the following hold.
\begin{itemize}
\item[{\rm (a)}] If $\alpha+\beta\not\in\Phi$, then $[\be_\alpha,
\be_\beta]=\be_\beta \be_\alpha \be_\beta=0$. 
\item[{\rm (b)}] If $\alpha+\beta\in\Phi$, then $[\be_\alpha,
\be_\beta]=c\,\be_{\alpha+\beta}$ where $c\in\{\pm 1,\pm 2\}$; we have
$c=\pm 2$ if and only if $\alpha-\beta\in\Phi$.
\item[{\rm (c)}] If $\alpha+\beta\in\Phi$ and $2\alpha+\beta\not\in
\Phi$, then the pairwise products of $\be_\alpha$, $[\be_\alpha,\be_\beta]$
and $\be_\beta \be_\alpha \be_\beta$ are all zero; furthermore, there is a
sign $c'=\pm 1$ such that 
\[\be_\beta \be_\alpha \be_\beta=\left\{\begin{array}{cl} 
c'\be_{\alpha+2\beta} & \quad \mbox{if $\alpha+2\beta\in\Phi$},\\
0 & \quad \mbox{otherwise}.\end{array}\right.\]
\end{itemize}
\end{prop}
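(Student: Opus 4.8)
The plan is to exploit the very explicit monomial form of the operators $\be_\alpha$ from Definition~\ref{choose}, which act on the basis $\{z_\mu\}$ by sending $z_\mu\mapsto \pm z_{\mu+\alpha}$ precisely when $(\mu,\alpha^\vee)=-1$ and to $0$ otherwise. The key observation is that each $\be_\gamma$ has at most one nonzero entry in every row and column, so a product such as $\be_\beta\be_\alpha\be_\beta$ is again a sum of signed coordinate maps $z_\mu\mapsto \pm z_{\mu+2\beta+\alpha}$, and its support is controlled entirely by the arithmetic of the integers $(\mu,\alpha^\vee),(\mu,\beta^\vee)\in\{0,\pm1\}$ (minuscule condition) together with the standard root-string information encoded in the hypotheses $\alpha\pm\beta\in\Phi$, $2\alpha+\beta\in\Phi$, etc. First I would translate every commutator or product into its effect on an arbitrary $z_\mu$, reducing each assertion to a combinatorial statement about which chains of conditions $(\mu,\alpha^\vee)=-1$, $(\mu+\alpha,\beta^\vee)=-1$, and so on, can simultaneously hold.

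For part (a), since $\alpha+\beta\notin\Phi$, the operator $\be_{\alpha+\beta}$ does not exist and, more to the point, the composite $\be_\alpha\be_\beta$ would have to produce a weight $\mu+\alpha+\beta$ from $\mu$; I would show this is impossible by checking that the two conditions $(\mu,\beta^\vee)=-1$ and $(\mu+\beta,\alpha^\vee)=-1$ cannot both hold when $\alpha+\beta\notin\Phi$ (otherwise $\mu+\alpha+\beta$ would be a weight of $M$ differing from $\mu$ by the non-root $\alpha+\beta$, forcing a root-string contradiction). This kills both orderings of the product, hence $[\be_\alpha,\be_\beta]=0$; the triple product $\be_\beta\be_\alpha\be_\beta$ vanishes for the same reason. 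For part (b), when $\alpha+\beta\in\Phi$ the bracket lands in $\fg_{\alpha+\beta}$, which is one-dimensional (Proposition~\ref{propj}), so $[\be_\alpha,\be_\beta]$ must be a scalar multiple of $\be_{\alpha+\beta}$; the scalar $c$ is read off by evaluating both sides on a single well-chosen $z_\mu$ with $(\mu,(\alpha+\beta)^\vee)=-1$. The value $c=\pm2$ versus $c=\pm1$ is determined by whether both summands $\be_\alpha\be_\beta(z_\mu)$ and $-\be_\beta\be_\alpha(z_\mu)$ are nonzero, which by the minuscule arithmetic happens exactly when the $\alpha,\beta$ root string through $\mu$ has length allowing $\mu+\beta$ and $\mu+\alpha$ both to be weights --- i.e.\ precisely the condition $\alpha-\beta\in\Phi$. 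Here I would use that in the minuscule module every nonzero weight string has length at most $2$, so $(\mu,\alpha^\vee),(\mu,\beta^\vee)$ pin down the situation completely.

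For part (c), the hypothesis $2\alpha+\beta\notin\Phi$ means $\be_\alpha^2$-type obstructions force the products $\be_\alpha\cdot[\be_\alpha,\be_\beta]$ and $\be_\alpha\cdot(\be_\beta\be_\alpha\be_\beta)$ to involve the non-root $2\alpha+\beta$, hence vanish; I would verify the pairwise vanishing of the three listed products by the same support analysis, noting that $\be_\beta\be_\alpha\be_\beta$ shifts weights by $\alpha+2\beta$ and $[\be_\alpha,\be_\beta]$ by $\alpha+\beta$, so composing them or $\be_\alpha$ with them lands outside $\Phi$. The final identity identifies $\be_\beta\be_\alpha\be_\beta$ with $c'\be_{\alpha+2\beta}$ (or $0$): since this operator lies in $\fg_{\alpha+2\beta}$ and that space is one-dimensional when $\alpha+2\beta\in\Phi$ and zero otherwise, the coefficient is a sign determined by a single evaluation, using $\be_\gamma^2=0$ to see the operator is genuinely a signed coordinate map. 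The main obstacle I anticipate is the bookkeeping of signs: because $\be_\alpha$ is only defined up to sign, the scalars $c,c'$ require a careful single-weight evaluation rather than an abstract argument, and one must confirm the arithmetic of the minuscule pairings $(\mu,\alpha^\vee),(\mu,\beta^\vee),(\mu+\alpha,\beta^\vee)$ rules out all spurious cases --- in particular verifying the ``$c=\pm2$ iff $\alpha-\beta\in\Phi$'' equivalence, where I would reduce to the rank-$2$ subsystem generated by $\alpha,\beta$ and argue inside the corresponding $W'$-orbits exactly as in the proof of Proposition~\ref{braidr}.
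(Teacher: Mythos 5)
The central principle you invoke---that an operator built from the $\be_\gamma$'s must vanish whenever it shifts weights by a non-root---is false, and it breaks your proofs of (a) and of the pairwise-vanishing claims in (c). Products such as $\be_\alpha\be_\beta$ lie in $\gl(M)$ but generally \emph{not} in $\fg$, so the root-space decomposition of Proposition~\ref{propj} puts no constraint on them, and weights of a minuscule module can perfectly well differ by a non-root. Concrete counterexample to your claim in (a): take $\Phi$ of type $A_3$, $\Psi$ the $W$-orbit of $\varpi_2$ (so $M\cong\wedge^2\C^4$), $\alpha=\alpha_1$, $\beta=\alpha_3$, and $\mu=s_1s_3s_2(\varpi_2)=\varpi_2-\alpha_1-\alpha_2-\alpha_3\in\Psi$, which satisfies $(\mu,\alpha_1^\vee)=(\mu,\alpha_3^\vee)=-1$. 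Then $e_1e_3(z_\mu)=z_{\mu+\alpha_1+\alpha_3}\neq 0$ although $\alpha_1+\alpha_3\notin\Phi$ (and $\be_{\alpha_i}=\pm e_i$ by Definition~\ref{choose}), so the two conditions you claim are incompatible do hold simultaneously and both orderings of the product are nonzero. What is true is that the two orderings \emph{agree}, so the commutator vanishes; this comes from the grading of $\fg$ itself, since the bracket (unlike the product) does lie in $\fg$: $[\be_\alpha,\be_\beta]\in[\fg_\alpha,\fg_\beta]\subseteq\fg_{\alpha+\beta}=\{0\}$, and then $\be_\beta\be_\alpha\be_\beta=-\frac{1}{2}[\be_\beta,[\be_\beta,\be_\alpha]]=0$ using $\be_\beta^2=0$. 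This is how the paper argues, and the same identities give the pairwise vanishings in (c): for instance $\be_\alpha[\be_\alpha,\be_\beta]=-\be_\alpha\be_\beta\be_\alpha$ and $\be_\alpha\be_\beta\be_\alpha=-\frac{1}{2}[\be_\alpha,[\be_\alpha,\be_\beta]]\in\fg_{2\alpha+\beta}=\{0\}$. (A support analysis \emph{can} prove $\be_\alpha\be_\beta\be_\alpha=0$, but the correct computation is that nonvanishing on some $z_\mu$ forces $(\beta,\alpha^\vee)=-2$ and hence $2\alpha+\beta=s_\alpha(\beta)\in\Phi$; it is this chain of pairing conditions, not the non-rootness of the shift, that does the work.)

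Part (b) of your sketch is closer to the paper, but it has two gaps. First, $\dim\fg_{\alpha+\beta}=1$ only gives $[\be_\alpha,\be_\beta]=c\,\be_{\alpha+\beta}$ with possibly $c=0$; to get $c\neq 0$ (hence $c\in\{\pm1,\pm2\}$) you need the equality $[\fg_\alpha,\fg_\beta]=\fg_{\alpha+\beta}$ for $\alpha,\beta,\alpha+\beta\in\Phi$ (the paper cites \cite[\S 8.4]{H}); without it, your single-weight evaluation cannot rule out that the two monomial terms cancel. Second, the equivalence ``$c=\pm 2$ iff $\alpha-\beta\in\Phi$'' is asserted rather than proved, and your appeal to root strings through $\mu$ conflates root strings with weight strings. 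The paper's route: $c=\pm 2$ iff some $\mu$ has both $\be_\alpha\be_\beta(z_\mu)\neq 0$ and $\be_\beta\be_\alpha(z_\mu)\neq 0$, which forces $(\mu,\beta^\vee)=(\mu+\alpha,\beta^\vee)=-1$ and hence $(\alpha,\beta)=0$; conversely, orthogonality plus the fact that $\Phi$ has at most two root lengths gives $(\alpha+\beta)^\vee=\frac{1}{2}(\alpha^\vee+\beta^\vee)$, so any $\mu$ with $(\mu,(\alpha+\beta)^\vee)=-1$ has $(\mu,\alpha^\vee)=(\mu,\beta^\vee)=-1$ and both products are nonzero, giving $c=\pm 2$; finally, given $\alpha+\beta\in\Phi$, the condition $(\alpha,\beta)=0$ is equivalent to $\alpha-\beta\in\Phi$. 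These steps carry the real content of (b) and would need to be supplied.
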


\begin{proof} Recall from the general theory of Lie algebras that, for
any $\mu,\nu\in\fh^*$, we have $[\fg_\mu,\fg_\mu]\subseteq \fg_{\mu+\nu}$. 
As in the proof of Lemma~\ref{lemj}, a straightforward computation shows the
following relations (where one uses that $\be_\alpha^2=\be_\beta^2=0$): 
\begin{equation*}
[\be_\alpha,[\be_\alpha,\be_\beta]]=-2\be_\alpha \be_\beta\be_\alpha
\qquad\mbox{and}\qquad [\be_\beta,[\be_\beta,\be_\alpha]]=-2\be_\beta 
\be_\alpha \be_\beta.\tag{$\dagger$}
\end{equation*}
Thus, $\be_\alpha \be_\beta\be_\alpha \in\fg_{2\alpha+\beta}$ if
$2\alpha+\beta\in\Phi$, and $\be_\alpha \be_\beta\be_\alpha=0$ if 
$2\alpha+\beta\not\in\Phi$; an analogous statement holds for 
$\be_\beta \be_\alpha \be_\beta$.

(a) If $\alpha+\beta\not\in\Phi$, then $[\fg_\alpha,\fg_\beta]=\{0\}$
and so $[\be_\alpha,\be_\beta]=0$, $\be_\beta \be_\alpha \be_\beta=0$
(see ($\dagger$)). 

(b) If $\alpha+\beta\in\Phi$, then $[\fg_\alpha,\fg_\beta]=
\fg_{\alpha+ \beta}$ (see \cite[\S 8.4]{H}) and so $[\be_\alpha,
\be_\beta]=c\be_{\alpha+\beta}$ for some $0\neq c\in \C$. Now, we have 
seen that $\be_\alpha,\be_\beta,\be_{\alpha+\beta}$ are represented by 
matrices with all entries in $\{0,\pm 1\}$, with at most one non-zero 
entry in each row and each column. This certainly implies that 
$[\be_\alpha,\be_\beta]=\be_\alpha\be_\beta-\be_\beta\be_\alpha$ is 
represented by a matrix with all entries in $\{0,\pm 1,\pm 2\}$, hence 
$c\in\{\pm 1,\pm 2\}$; furthermore, $c=\pm 2$ precisely when some
entry of $[\be_\alpha,\be_\beta]$ equals $\pm 2$. The latter condition
can only happen if there is some $\mu\in\Psi$ such that 
$\be_\alpha\be_\beta(z_\mu)=-\be_\alpha\be_\beta(z_\mu)\neq 0$. Assume 
that this is the case. Now, we have 
\begin{align*}
\be_\alpha\be_\beta(z_\mu) &=\left\{\begin{array}{cl} 
\pm z_{\mu+\alpha+\beta} & \quad \mbox{if $(\mu,\beta^\vee)=-1$ and
$(\mu+\beta,\alpha^\vee)=-1$},\\ 0 & \quad \mbox{otherwise};
\end{array}\right.\\
\be_\beta\be_\alpha(z_\mu) &=\left\{\begin{array}{cl} 
\pm z_{\mu+\alpha+\beta} & \quad \mbox{if $(\mu,\alpha^\vee)=-1$ and
$(\mu+\alpha,\beta^\vee)=-1$},\\ 0 & \quad \mbox{otherwise}.
\end{array}\right.
\end{align*}
Hence, in particular, we must have $(\mu,\beta^\vee)=-1$ and 
$(\mu+\alpha,\beta^\vee)=-1$, which implies that $(\alpha,\beta)=0$.
Conversely, assume that $(\alpha,\beta)=0$. Let $\mu\in\Psi$ be
such that $\be_{\alpha+\beta}(z_\mu)\neq 0$. Then $(\mu,(\alpha+
\beta)^\vee)=-1$. Now $(\alpha+\beta,\alpha+\beta)=(\alpha,\alpha)+(\beta,
\beta)$. Since there are only two possible root lengths in $\Phi$,
we deduce that $(\alpha,\alpha)=(\beta,\beta)$ and $(\alpha+\beta)^\vee=
\frac{1}{2}(\alpha^\vee+\beta^\vee)$. Since $(\mu,\alpha^\vee)$ and
$(\mu,\beta^\vee)$ are in $\{0,\pm 1\}$, we must have $(\mu,\alpha^\vee)
=(\mu,\beta^\vee)=-1$. Hence, $\be_\alpha\be_\beta(z_\mu)=\pm z_{\mu+
\alpha+\beta}$ and $\be_\beta\be_\alpha(z_\mu)=\pm z_{\mu+ \alpha+\beta}$. 
Since $[\be_\alpha,\be_\beta](z_\mu)\neq 0$, we conclude that 
$[\be_\alpha,\be_\beta](z_\mu)=\pm 2z_{\mu+\alpha+\beta}$ and so $c=\pm 2$.
Thus, we have shown that $c=\pm 2$ if and only if $(\alpha,\beta)=0$.
Finally, since $\alpha+\beta\in\Phi$, the latter condition is equivalent
to the condition that $\alpha-\beta\in\Phi$; see \cite[p.~45]{H}.

(c) We have $\be_\alpha \be_\beta \be_\alpha=0$ since $2\alpha+\beta
\not\in\Phi$ (see ($\dagger$)). This immediately implies the statement about 
the pairwise products of $\be_\alpha$, $[\be_\alpha,\be_\beta]$ and 
$\be_\beta \be_\alpha \be_\beta$. Indeed, we have $\be_\alpha [\be_\alpha,
\be_\beta]= \be_\alpha(\be_\alpha \be_\beta-\be_\beta\be_\alpha)=-\be_\alpha
\be_\beta\be_\alpha=0$; similarly, $[\be_\alpha,\be_\beta]\be_\alpha=0$. 
Furthermore, the products of $\be_\beta \be_\alpha \be_\beta$ with
$\be_\alpha$ and with $[\be_\alpha,\be_\beta]$ will be zero, since 
these products always involve one of the terms $\be_\beta^2=0$ or
$\be_\alpha\be_\beta \be_\alpha=0$.

Finally, we have $\be_\beta \be_\alpha \be_\beta=0$ if 
$\alpha+2\beta\not\in\Phi$. On the other hand, if $\alpha+ 2\beta\in\Phi$, 
then $[\fg_\beta,[\fg_\beta,\fg_\alpha]]=[\fg_\beta,\fg_{\alpha+\beta}]=
\fg_{\alpha+2\beta}$ (see \cite[\S 8.4]{H}) and so $\be_\beta\be_\alpha
\be_\beta=c'\be_{\alpha+ 2\beta}$ for some $0\neq c' \in \C$. Again, since 
$\be_\alpha,\be_\beta,\be_{\alpha+2\beta}$ are represented by matrices 
with all entries in $\{0,\pm 1\}$, with at most one non-zero entry in each 
row and each column, the same is true for the product $\be_\beta \be_\alpha
\be_\beta$ and so $c'=\pm 1$.
\end{proof}

%%%%%%%%%%%%%%%%%%%%%%%%%%%%%%%%%%%%%%%%%%%%%%%%%%%%%%%%%%%%%%%%%%%%%%%%
\section{Chevalley groups via minuscule weights} \label{sec3}

Let $\Psi\subseteq \Lambda$ be as in Definition~\ref{defj} and 
consider the corresponding Lie algebra $\fg\subseteq \gl(M)$. We 
will now pass from $\C$ to an arbitrary commutative ring $R$ with~$1$.

Let $\bar{M}$ be a free $R$-module with a basis $\{\bar{z}_\mu\mid \mu
\in\Psi\}$. We define $R$-linear maps $\bar{e}_i\colon \bar{M}\rightarrow 
\bar{M}$ and $\bar{f}_i\colon \bar{M}\rightarrow \bar{M}$ by analogous 
formulae as in Definition~\ref{defj}; again, we have $\bar{e}_i^2=
\bar{f}_i^2=0$ for all $i\in I$. For any $t\in R$ we set 
\[ x_i(t):=\mbox{id}_{\bar{M}}+t\bar{e}_i \qquad \mbox{and}\qquad 
y_i(t):=\mbox{id}_{\bar{M}}+t\bar{f}_i.\]
Note that $x_i(t+t')=x_i(t)x_i(t')$ and $y_i (t+t')=y_i(t)y_i(t')$ for 
all $t,t'\in R$ (since $\bar{e}_i^2=\bar{f}_i^2=0$). Furthermore, 
$x_i(0)=y_i(0)=\mbox{id}_{\bar{M}}$. Hence, $x_i(t)$ and $y_i(t)$ are 
invertible where $x_i(t)^{-1}=x_i(-t)$ and $y_i(t)^{-1}=y_i(-t)$. So we 
obtain a group 
\[G=G_R(\Psi):=\langle x_i(t),y_i(t)\mid i\in I,t\in R \rangle\subseteq 
\GL(\bar{M}),\]
which we call the {\em Chevalley group} of type $\Psi$ over $R$. Our first 
aim is to exhibit subgroups in $G$ which form the ingredients of a 
split $BN$-pair (as in \cite[Chap.~8]{Ca1}).

For $i\in I$ and $t\in R^\times$, we set $\bar{n}_i(t):=x_i(t)y_i(-t^{-1})
x_i(t)\in G$. By the same computations as in the proof of 
Lemma~\ref{lemni}, we find that 
\[ \bar{n}_i(t)(\bar{z}_\mu)=\left\{\begin{array}{cl} \bar{z}_\mu & \quad 
\mbox{if $(\mu, \alpha_i^\vee)=0$},\\ -t^{-1}\bar{z}_{\mu-\alpha_i} & 
\quad \mbox{if $(\mu,\alpha_i^\vee)=1$},\\ t\bar{z}_{\mu+\alpha_i} & 
\quad \mbox{if $(\mu, \alpha_i^\vee)=-1$}.  \end{array}\right.\]
In particular, each $\bar{n}_i(t)$ is represented by a monomial matrix; 
furthermore, setting $\bar{n}_i:=\bar{n}_i(1)$, we have $\bar{n}_i^2
(\bar{z}_\mu)=(-1)^{(\mu,\alpha_i^\vee)}\bar{z}_\mu$ and $\bar{n}_i^4=
\operatorname{id}_{\bar{M}}$. 

\begin{rem} \label{reduce} (a) Let $M_\Z:=\langle z_\mu\mid \mu\in\Psi
\rangle_\Z\subseteq M$. By the formulae in Definition~\ref{defj} it is 
obvious that $e_i(M_\Z)\subseteq M_\Z$ and $f_i(M_\Z)\subseteq M_\Z$ for 
all~$i$. Hence we also have $n_i=(\mbox{id}_M+e_i)(\mbox{id}_M-f_i)
(\mbox{id}_M+e_i)\in\GL(M_\Z)$. Consequently, the elements $\be_\alpha$ 
($\alpha\in\Phi$) in Definition~\ref{choose} satisfy $\be_\alpha(M_\Z) 
\subseteq M_\Z$. Now, we can naturally identify $\bar{M}=R\otimes_\Z M_\Z$. 
Then $\bar{e}_i$, $\bar{f}_i$, $\bar{n}_i$ are the maps induced by $e_i$, 
$f_i$, $n_i$.

(b) Let $\alpha\in\Phi$ and $\bar{\be}_\alpha\colon\bar{M}
\rightarrow\bar{M}$ be the map induced by $\be_\alpha$; then 
$\bar{\be}_\alpha^2=0$ and 
\[ \bar{\be}_\alpha(\bar{z}_\mu)=\left\{\begin{array}{cl}
\pm \bar{z}_{\mu+\alpha} & \quad \mbox{if $(\mu,\alpha^\vee)=-1$},\\
0 & \quad \mbox{otherwise}.\end{array}\right.\]
We define $x_\alpha(t):=\mbox{id}_{\bar{M}}+t\bar{\be}_\alpha$
for $t\in R$. Then $x_\alpha(t)\in\GL(\bar{M})$ where
$x_\alpha(t)^{-1}=x_\alpha(-t)$ for all $t\in R$; furthermore,
$x_\alpha(t+t')=x_\alpha(t)x_\alpha(t')$ for all $t,t' \in R$. We 
have $x_\alpha(t)\in G$ since $\be_\alpha$ is obtained by conjugating
a suitable $e_i$ by a product of various $n_j$ and, hence, an analogous 
statement is true for $\bar{\be}_\alpha$ as well. Since $e_i=\pm 
\be_{\alpha_i}$ and $f_i=\pm \be_{-\alpha_i}$ for all $i\in I$
(see Definition~\ref{choose}), we have 
\[G=G_R(\Psi)=\langle x_\alpha(t)\mid \alpha\in\Phi,t\in R\rangle.\]
We can now define subgroups of $G$ as follows:
\[U^+:=\langle x_\alpha(t)\mid \alpha\in\Phi^+, t\in R\rangle 
\qquad \mbox{and}\qquad U^-:=\langle x_\alpha(t)\mid
\alpha\in\Phi^-, t\in R\rangle.\]
(Note that these do not depend on the choice of the elements $\be_\alpha$
in Definition~\ref{choose}.) Now let us choose an enumeration of the 
elements of $\Psi$ as in Remark~\ref{rem10}. Then $\bar{\be}_\alpha$ will 
be represented by a strictly upper triangular matrix if $\alpha\in\Phi^+$, 
and by a strictly lower triangular matrix if $\alpha\in\Phi^-$. Consequently,
\begin{align*}
U^+ & \mbox{ consists of upper triangular matrices with $1$ on the 
diagonal},\\ U^- & \mbox{ consists of lower triangular matrices with 
$1$ on the diagonal}.
\end{align*}
We can now establish Chevalley's commutator relations \cite{Ch} in 
our setting.
\end{rem}

\begin{prop} \label{comm} Let $\alpha, \beta \in \Phi^+$, $\beta\neq 
\pm\alpha$, and $t,u\in R$. Then $x_\alpha(t)x_\beta(u)=x_\beta(u)
x_\alpha(t)$ if $\alpha+\beta\not\in \Phi$. Now assume that $\alpha+
\beta \in \Phi$ and let $c\in\{\pm 1,\pm 2\}$ be such that $[\be_\alpha,
\be_\beta]=c\,\be_{\alpha+\beta}$, as in Proposition~\ref{comm1}. Then
the following hold.
\begin{itemize}
\item[(a)] If $2\alpha+\beta \not\in \Phi$ and $\alpha+2\beta\not\in\Phi$, 
then $x_\beta(-u)x_\alpha(t)x_\beta(u)=x_\alpha(t)x_{\alpha+\beta}(c tu)$.
\item[(b)] If $2\alpha+\beta \not\in \Phi$ and $\alpha+2\beta\in\Phi$, 
then 
\[x_\beta(-u)x_\alpha(t)x_\beta(u)=x_\alpha(t)x_{\alpha+\beta}(ctu) 
x_{\alpha+2\beta}(-c'tu^2)\]
where the factors on the right hand side commute with each other and  
$c'=\pm 1$ is determined by the relation $\be_\beta\be_\alpha
\be_\beta=c'\be_{\alpha+2\beta}$ (as in Proposition~\ref{comm1}).
\item[(c)] If $2\alpha+\beta \in \Phi$, then $\alpha+2\beta\not\in\Phi$ and
\[x_\beta(-u)x_\alpha(t)x_\beta(u)=x_\alpha(t)x_{\alpha+\beta}(ctu) 
x_{2\alpha+\beta}(c''t^2u),\]
where $c''=\pm 1$ is determined by the relation $\be_\alpha\be_\beta
\be_\alpha=c''\be_{2\alpha+\beta}$.
\end{itemize}
\end{prop}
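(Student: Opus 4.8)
The plan is to prove Chevalley's commutator relations directly by exploiting the fact that all the relevant operators $\bar{\be}_\alpha$ are nilpotent (indeed $\bar{\be}_\alpha^2=0$), so that the exponential-type elements $x_\alpha(t)=\operatorname{id}+t\bar{\be}_\alpha$ have only linear terms. The key observation is that $x_\beta(-u)x_\alpha(t)x_\beta(u)$ is exactly the conjugation of $x_\alpha(t)$ by $x_\beta(u)^{-1}$, and since $x_\beta(u)^{-1}\bar{\be}_\alpha x_\beta(u)$ can be expanded using the commutator bracket, I expect the whole product to telescope into a finite product of the $x_\gamma$'s, with coefficients read off from the structure constants $c$, $c'$, $c''$ supplied by Proposition~\ref{comm1}. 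Throughout, I would work over $\Z$ first (using $M_\Z$ as in Remark~\ref{reduce}) and then tensor with $R$, so that there is no division and the identities hold over any commutative ring.

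First I would set up the basic conjugation formula. Since $\bar{\be}_\beta^2=0$, we have $x_\beta(u)^{-1}=x_\beta(-u)=\operatorname{id}-u\bar{\be}_\beta$, and a direct expansion gives
\[
x_\beta(-u)\,\bar{\be}_\alpha\,x_\beta(u)=\bar{\be}_\alpha+u[\bar{\be}_\alpha,\bar{\be}_\beta]+\tfrac{1}{2}u^2[[\bar{\be}_\alpha,\bar{\be}_\beta],\bar{\be}_\beta]\cdot(\pm 1),
\]
where the quadratic term is governed by $\bar{\be}_\beta\bar{\be}_\alpha\bar{\be}_\beta$ via the relation $(\dagger)$ from Proposition~\ref{comm1}, and all higher terms vanish because each operator squares to zero. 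The cleanest route, however, is not to manipulate the conjugated generator but to multiply out $x_\beta(-u)x_\alpha(t)x_\beta(u)$ as a product of three elements of $\GL(\bar{M})$ directly and collect terms by total degree in $t$ and $u$; since every $\bar{\be}$ is square-zero and the products of three or more distinct $\bar{\be}$'s are controlled by Proposition~\ref{comm1}(a),(c), only finitely many monomials survive.

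In the commuting case $\alpha+\beta\notin\Phi$, Proposition~\ref{comm1}(a) gives $[\bar{\be}_\alpha,\bar{\be}_\beta]=0$ and $\bar{\be}_\beta\bar{\be}_\alpha\bar{\be}_\beta=0$, whence $x_\alpha(t)$ and $x_\beta(u)$ commute outright. For case~(a), where $\alpha+\beta\in\Phi$ but neither $2\alpha+\beta$ nor $\alpha+2\beta$ lies in $\Phi$, Proposition~\ref{comm1}(c) forces $\bar{\be}_\alpha\bar{\be}_\beta\bar{\be}_\alpha=0$ and $\bar{\be}_\beta\bar{\be}_\alpha\bar{\be}_\beta=0$, so only the linear commutator term $ctu\,\bar{\be}_{\alpha+\beta}$ survives and one reads off $x_\beta(-u)x_\alpha(t)x_\beta(u)=x_\alpha(t)x_{\alpha+\beta}(ctu)$. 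Cases~(b) and~(c) are the substantive ones: there a genuine quadratic term appears, namely $\bar{\be}_\beta\bar{\be}_\alpha\bar{\be}_\beta=c'\bar{\be}_{\alpha+2\beta}$ in~(b) and $\bar{\be}_\alpha\bar{\be}_\beta\bar{\be}_\alpha=c''\bar{\be}_{2\alpha+\beta}$ in~(c), and I would substitute these to obtain the stated $u^2$ and $t^2$ coefficients $-c'tu^2$ and $c''t^2u$. The commutativity of the factors on the right-hand side follows because the relevant pairwise products vanish, again by Proposition~\ref{comm1}(c) together with the height considerations that exclude further sums of roots.

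The main obstacle I anticipate is bookkeeping the signs and the exact numerical coefficients: I must verify that the structure constant $c$ produced purely from matrix entries in Proposition~\ref{comm1} is genuinely the coefficient appearing when the triple product is expanded, and that the quadratic coefficient really carries the factor $\tfrac12$ correctly into an \emph{integer} multiple $-c'$ or $c''$ (the $\tfrac12$ must cancel against a factor of~$2$ coming from the two orderings $\bar{\be}_\alpha\bar{\be}_\beta$ and $\bar{\be}_\beta\bar{\be}_\alpha$ in the expansion). Because $\be_\alpha$ is well-defined only up to a sign (Definition~\ref{choose}), I would keep the signs $c,c',c''$ as the abstractly-defined structure constants of Proposition~\ref{comm1} rather than trying to pin them down, so that the identities are internally consistent regardless of the sign choices. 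Since every operator squares to zero, the finite expansion is guaranteed to terminate, and the argument reduces entirely to matching coefficients of the surviving monomials against the bracket relations already established.
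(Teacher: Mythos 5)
Your treatment of the commuting case and of cases (a) and (b) matches the paper's argument exactly: expand the triple product to get the exact identity $x_\beta(-u)x_\alpha(t)x_\beta(u)=\mbox{id}_{\bar{M}}+t\bar{\be}_\alpha+tu[\bar{\be}_\alpha,\bar{\be}_\beta]-tu^2\bar{\be}_\beta\bar{\be}_\alpha\bar{\be}_\beta$ (no factor $\frac{1}{2}$ appears anywhere, so your worry about dividing by $2$ over a general ring evaporates once you discard the conjugation formula, as you do), then factor it using Proposition~\ref{comm1}(a),(c). But your plan for case (c) has a genuine gap. In the expansion above, each monomial contains at most one factor $\bar{\be}_\alpha$, because $x_\alpha(t)$ occurs only once in the product; the only cubic term is $\bar{\be}_\beta\bar{\be}_\alpha\bar{\be}_\beta$. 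The operator $\bar{\be}_\alpha\bar{\be}_\beta\bar{\be}_\alpha$ therefore never appears, so there is nothing into which you could ``substitute'' $\bar{\be}_\alpha\bar{\be}_\beta\bar{\be}_\alpha=c''\bar{\be}_{2\alpha+\beta}$. Indeed, in case (c) one has $\alpha+2\beta\notin\Phi$, so the cubic term vanishes and the left-hand side collapses to $\mbox{id}_{\bar{M}}+t\bar{\be}_\alpha+ctu\,\bar{\be}_{\alpha+\beta}$; the factor $x_{2\alpha+\beta}(c''t^2u)$ on the right is needed not because of a quadratic term on the left, but because $x_\alpha(t)$ and $x_{\alpha+\beta}(ctu)$ fail to commute: their product produces the cross term $ct^2u\,\bar{\be}_\alpha\bar{\be}_{\alpha+\beta}$, since $\alpha+(\alpha+\beta)=2\alpha+\beta\in\Phi$. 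To finish by direct coefficient matching you would need additional relations, e.g.\ $c\,\bar{\be}_\alpha\bar{\be}_{\alpha+\beta}=-c''\bar{\be}_{2\alpha+\beta}$ and the vanishing of $\bar{\be}_\alpha\bar{\be}_{2\alpha+\beta}$ and $\bar{\be}_{\alpha+\beta}\bar{\be}_{2\alpha+\beta}$; these are provable (for instance $\bar{\be}_\alpha\bar{\be}_{\alpha+\beta}=c\,\bar{\be}_\alpha(\bar{\be}_\alpha\bar{\be}_\beta-\bar{\be}_\beta\bar{\be}_\alpha)=-c\,\bar{\be}_\alpha\bar{\be}_\beta\bar{\be}_\alpha$, using $c^{-1}=c$ and $\bar{\be}_\alpha^2=0$), but they are not supplied by Proposition~\ref{comm1} and are absent from your proposal.

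The paper sidesteps this entirely: since in case (c) we have $\alpha+2\beta\notin\Phi$, it applies the case-(b) computation with the roles of $\alpha$ and $\beta$ exchanged --- the expansion of $x_\alpha(-t)x_\beta(u)x_\alpha(t)$ \emph{does} contain $\bar{\be}_\alpha\bar{\be}_\beta\bar{\be}_\alpha$ --- obtaining $x_\alpha(-t)x_\beta(u)x_\alpha(t)=x_\beta(u)x_{\alpha+\beta}(-ctu)x_{2\alpha+\beta}(-c''t^2u)$, and then recovers the asserted formula for $x_\beta(-u)x_\alpha(t)x_\beta(u)$ by purely group-theoretic manipulation: multiply on the left by $x_\beta(-u)$, invert both sides, and multiply on the left by $x_\alpha(t)$, using also that $x_{\alpha+\beta}$ and $x_{2\alpha+\beta}$ commute (the already-settled commuting case, as $(\alpha+\beta)+(2\alpha+\beta)\notin\Phi$). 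So either adopt that swap-and-invert step, or prove the auxiliary product relations indicated above; as written, your case (c) does not go through.
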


\begin{proof} (Cf.\ \cite[\S 5.2]{Ca1}.) First note that the relations
in Proposition~\ref{comm1} also hold when we replace each $\be_{\gamma}
\colon M\rightarrow M$ (for $\gamma\in\Phi$) by the corresponding 
$\bar{\be}_{\gamma}\colon \bar{M}\rightarrow \bar{M}$. Now, we have 
$x_\alpha(t)=\mbox{id}_{\bar{M}}+t\bar{\be}_\alpha$, $x_\beta(\pm u)=
\mbox{id}_{\bar{M}} \pm u\bar{\be}_\beta$ and so 
\begin{align*}
x_\beta(-u)&x_\alpha(t)x_\beta(u)=(\mbox{id}_{\bar{M}}-
u\bar{\be}_\beta)(\mbox{id}_{\bar{M}}+ t\bar{\be}_\alpha+u\bar{\be}_\beta+
tu\bar{\be}_\alpha \bar{\be}_\beta)\\ &= \mbox{id}_{\bar{M}}+t
\bar{\be}_\alpha +u\bar{\be}_\beta+ tu\bar{\be}_\alpha \bar{\be}_\beta-
u\bar{\be}_\beta- tu\bar{\be}_\beta\bar{\be}_\alpha - u^2\bar{\be}_\beta^2-
tu^2\bar{\be}_\beta\bar{\be}_\alpha\bar{\be}_\beta\\&=\mbox{id}_{\bar{M}}+
t\bar{\be}_\alpha+tu[\bar{\be}_\alpha,\bar{\be}_\beta]-tu^2\bar{\be}_\beta
\bar{\be}_\alpha \bar{\be}_\beta.
\end{align*}
If $\alpha+\beta\not\in\Phi$, then $[\bar{\be}_\alpha,\bar{\be}_\beta]=
\bar{\be}_\beta \bar{\be}_\alpha\bar{\be}_\beta=0$ by 
Proposition~\ref{comm1}(a). Consequently, we have $x_\beta(-u)
x_\alpha(t)x_\beta(u)=x_\alpha(t)$ in this case, as required. Now 
assume that $\alpha+\beta\in\Phi$ and let $c\in\{\pm 1,\pm 2\}$ be such
that $[\bar{\be}_\alpha, \bar{\be}_\beta]=c\,\bar{\be}_{\alpha+ \beta}$.
Since the diagram of our root system is as in Table~\ref{Mcoxgraphs}, one 
easily sees that either $2\alpha+\beta$ or $\alpha+2\beta$ is a root, but 
not both. This leads to the three cases (a), (b), (c).

Now, if $2\alpha+\beta\not \in\Phi$, then we obtain using 
Proposition~\ref{comm1}(c):
\begin{align*}
x_\beta(-u)x_\alpha(t)x_\beta(u)&=
(\mbox{id}_{\bar{M}}+t\bar{\be}_\alpha)(\mbox{id}_{\bar{M}}+tu
[\bar{\be}_\alpha,\bar{\be}_\beta])(\mbox{id}_{\bar{M}}-tu^2
\bar{\be}_\beta\bar{\be}_\alpha \bar{\be}_\beta)\\
&=(\mbox{id}_{\bar{M}}+t\bar{\be}_\alpha)(\mbox{id}_{\bar{M}}+ctu
\bar{\be}_{\alpha+\beta})(\mbox{id}_{\bar{M}}-tu^2\bar{\be}_\beta
\bar{\be}_\alpha \bar{\be}_\beta)\\ &=x_\alpha(t)x_{\alpha+\beta}(ctu)
(\mbox{id}_{\bar{M}}-tu^2\bar{\be}_\beta\bar{\be}_\alpha\bar{\be}_\beta);
\end{align*}
here, $\bar{\be}_\beta\bar{\be}_\alpha\bar{\be}_\alpha=0$ if $\alpha+
2\beta\not\in\Phi$, and $\bar{\be}_\beta\bar{\be}_\alpha \bar{\be}_\alpha=
c'\bar{\be}_{\alpha+2\beta}$ where $c'=\pm 1$, otherwise. This yields the
formulae in (a) and (b). On the other hand, if $2\alpha+\beta \in \Phi$, 
then $\alpha+2\beta\not\in \Phi$ and so the previous argument (exchanging 
the roles of $\alpha, \beta$) yields that 
\[x_\alpha(-t)x_\beta(u)x_\alpha(t)=x_\beta(u)x_{\alpha+\beta}(-ctu) 
x_{2\alpha+\beta}(-c''t^2u),\]
where $c''=\pm 1$ 
satisfies $\be_\alpha\be_\beta \be_\alpha=c''\be_{2\alpha+\beta}$. First 
multiplying this identity on the left by $x_\beta(-u)$, then taking the 
inverse of both sides and finally multiplying on the left by $x_\alpha(t)$
yields the desired formula for $x_\beta(-u)x_\alpha(t) x_\beta(u)$ in (c).
\end{proof}

\begin{cor} \label{subgru} For $\alpha\in\Phi$ let $X_\alpha:=
\{x_\alpha (t)\mid t\in R\}\subseteq G$.
\begin{itemize}
\item[(a)] We have $U^+=\prod_{\alpha \in\Phi^+} X_\alpha$ where the 
product is taken in some fixed order.
\item[(b)] Let $i\in I$. Then $U_i^+:=\langle X_\alpha\mid \alpha_i
\neq \alpha \in\Phi^+\rangle\subseteq U$ is a normal subgroup and 
$U=X_{\alpha_i}.U_i^+$; furthermore, $\bar{n}_iX_{\alpha_i}\bar{n}_i^{-1}=
X_{-\alpha_i}$ and $\bar{n}_iU_i^+\bar{n}_i^{-1}=U_i^+$.
\end{itemize}
\end{cor}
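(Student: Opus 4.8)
The plan is to establish the two parts of Corollary~\ref{subgru} in sequence, with part~(a) providing the structural backbone and part~(b) following from the commutator relations in Proposition~\ref{comm} together with the monomial action of $\bar{n}_i$ computed just before Remark~\ref{reduce}.

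For part~(a), I would argue that the product map $\prod_{\alpha\in\Phi^+} X_\alpha \to U^+$ (for a fixed ordering of $\Phi^+$) is surjective. The key tool is the commutator relations: given any word in the generators $x_\alpha(t)$ with $\alpha\in\Phi^+$, I can use Proposition~\ref{comm} to ``sort'' the factors into the prescribed order. Concretely, whenever two adjacent factors $x_\beta(u)x_\alpha(t)$ appear in the wrong order, rewriting $x_\beta(u)x_\alpha(t)=x_\alpha(t)x_\beta(u)\cdot\bigl(x_\alpha(t)^{-1}x_\beta(u)^{-1}x_\alpha(t)x_\beta(u)\bigr)$ replaces the inversion by the correctly-ordered pair times a commutator, and Proposition~\ref{comm} expresses that commutator as a product of $x_\gamma$ with $\gamma$ a strictly higher positive root (namely $\alpha+\beta$, and possibly $2\alpha+\beta$ or $\alpha+2\beta$). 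Since $\Phi^+$ is finite and each step pushes the ``defect'' toward roots of strictly greater height, this collection process terminates, yielding every element of $U^+$ in the normal form $\prod_{\alpha\in\Phi^+}x_\alpha(t_\alpha)$. That these factors exhaust $U^+$ and that the representation is in fact unique would follow from the strictly upper triangular shape noted in Remark~\ref{reduce}, though only surjectivity is needed for the statement.

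For part~(b), I would first note that $U_i^+$ is generated by the $X_\alpha$ with $\alpha\in\Phi^+$, $\alpha\neq\alpha_i$, and that $X_{\alpha_i}.U_i^+=U^+$ is immediate from part~(a) by placing $\alpha_i$ first in the chosen order. To see that $U_i^+$ is normal, it suffices to check that conjugating each generator $x_\beta(u)$ of $U_i^+$ by $x_{\alpha_i}(t)^{\pm 1}$ stays in $U_i^+$; again Proposition~\ref{comm} does this, because the extra factors $x_{\alpha_i+\beta}, x_{2\alpha_i+\beta}, x_{\alpha_i+2\beta}$ produced by the commutator all involve positive roots different from $\alpha_i$ (a sum of $\alpha_i$ with a positive root cannot equal $\alpha_i$), hence lie in $U_i^+$. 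Finally, the conjugation statements for $\bar{n}_i$ follow from the explicit monomial formula for $\bar{n}_i(\bar{z}_\mu)$: since $\bar{n}_i$ induces the reflection $s_i$ on weights up to sign, conjugation by $\bar{n}_i$ sends $\bar{\be}_\alpha$ to $\pm\bar{\be}_{s_i(\alpha)}$ (the Lie-algebra version being Lemma~\ref{lemni1}(b)), so $\bar{n}_iX_\alpha\bar{n}_i^{-1}=X_{s_i(\alpha)}$; in particular $\bar{n}_iX_{\alpha_i}\bar{n}_i^{-1}=X_{-\alpha_i}$, and since $s_i$ permutes $\Phi^+\setminus\{\alpha_i\}$, it fixes $U_i^+$ setwise.

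The main obstacle I anticipate is the bookkeeping in part~(a): making the termination argument for the collection process fully rigorous requires a careful choice of ordering of $\Phi^+$ (for instance, one compatible with the height function $\het$) and a monovariant that strictly decreases, so that the iterated application of Proposition~\ref{comm} is guaranteed to halt. The subtlety is that each commutator may introduce factors $x_{2\alpha+\beta}$ or $x_{\alpha+2\beta}$ associated to roots of larger height, so one must verify that these newly created factors, once themselves collected, cannot regenerate earlier inversions indefinitely; the finiteness of $\Phi^+$ together with the height-increasing nature of the correction terms is what ultimately closes the loop.
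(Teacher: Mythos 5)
Your proposal is correct and follows essentially the same route as the paper: the paper handles part~(a) and the first two claims of~(b) by exactly the collection/commutator argument you sketch (which it outsources to Carter, \S 5.3, as a ``purely group-theoretical argument''), and it proves the $\bar{n}_i$-statements precisely as you do, via $\bar{n}_i\bar{\be}_\alpha\bar{n}_i^{-1}=\pm\bar{\be}_{s_i(\alpha)}$ and the fact that $s_i$ permutes $\Phi^+\setminus\{\alpha_i\}$. Your extra care about ordering $\Phi^+$ by height and exhibiting a terminating monovariant is exactly the content of the cited reference, so nothing is missing.
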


\begin{proof} The commutator relations imply (a) and the first two 
statements in (b), by a purely group-theoretical argument; cf.\ 
\cite[\S 5.3]{Ca1}. Now, $\bar{n}_i \bar{\be}_\alpha \bar{n}_i^{-1}=
\pm \bar{\be}_{s_i(\alpha)}$ for all $\alpha\in\Phi$ (by the discussion
following Lemma~\ref{lemni1}). Hence, $\bar{n}_iX_\alpha \bar{n}_i^{-1}=
X_{s_i(\alpha)}$ and $\bar{n}_iX_{\alpha_i}\bar{n}_i^{-1}=X_{-\alpha_i}$. 
Finally, it is well-known that $s_i(\alpha) \in\Phi^+$ 
for all $\alpha\in\Phi^+$ such that $\alpha\neq \alpha_i$; see 
\cite[2.1.5]{Ca1}. Hence, $n_i U_i^+ n_i^{-1}=U_i^+$. 
\end{proof}

The next step is to define a diagonal subgroup (or ``torus'') in $G$.

\begin{lem} \label{lem22} Let $i\in I$, $t\in R^\times$ and set $h_i(t)
:=\bar{n}_i(t)\bar{n}_i(-1)\in G$. Then 
\[ h_i(t)(\bar{z}_\mu)=t^{(\mu,\alpha_i^\vee)}\bar{z}_\mu.\]
Consequently, $h_i(1)=1$ and $h_i(tt')=h_i(t)h_i(t')$ for all $t,t'\in
R^\times$; thus, 
\[H:=\langle h_i(t)\mid i\in I,t\in R^\times\rangle \subseteq G\]
is an abelian group all of whose elements are represented by diagonal 
matrices. Given $t_i\in R^\times$ ($i \in I$), we have 
$\prod_{i\in I} h_i(t_i)=1\Leftrightarrow \prod_{i\in I} t_i^{(\mu,
\alpha_i^\vee)}=1$  for all $\mu\in \Psi$.
\end{lem}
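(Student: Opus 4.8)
The plan is to compute the action of $h_i(t) = \bar{n}_i(t)\bar{n}_i(-1)$ on each basis vector $\bar{z}_\mu$ directly from the explicit monomial formula for $\bar{n}_i(t)$ established just before Lemma~\ref{lem22}. First I would split into the three cases according to the value of $(\mu,\alpha_i^\vee) \in \{0,\pm 1\}$, which is permissible since $\mu$ ranges over minuscule weights. For $(\mu,\alpha_i^\vee)=0$, both $\bar{n}_i(t)$ and $\bar{n}_i(-1)$ fix $\bar{z}_\mu$, giving $h_i(t)(\bar{z}_\mu)=\bar{z}_\mu=t^0\bar{z}_\mu$. For $(\mu,\alpha_i^\vee)=1$, applying $\bar{n}_i(-1)$ sends $\bar{z}_\mu$ to $\bar{z}_{\mu-\alpha_i}$ (the $-t^{-1}$ factor becomes $+1$ at $t=-1$); then, since $(\mu-\alpha_i,\alpha_i^\vee)=-1$, applying $\bar{n}_i(t)$ multiplies by $t$, yielding $h_i(t)(\bar{z}_\mu)=t\,\bar{z}_\mu=t^{(\mu,\alpha_i^\vee)}\bar{z}_\mu$. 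The case $(\mu,\alpha_i^\vee)=-1$ is symmetric and gives the factor $t^{-1}$. This establishes the displayed formula $h_i(t)(\bar{z}_\mu)=t^{(\mu,\alpha_i^\vee)}\bar{z}_\mu$, and I expect this bookkeeping — tracking how the exponents $(\mu,\alpha_i^\vee)$ and $(\mu\mp\alpha_i,\alpha_i^\vee)$ interlock — to be the only mildly delicate part, though it is entirely routine.

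Once the eigenvalue formula is in hand, the remaining assertions follow formally. Setting $t=1$ gives $h_i(1)(\bar{z}_\mu)=1^{(\mu,\alpha_i^\vee)}\bar{z}_\mu=\bar{z}_\mu$ for all $\mu$, so $h_i(1)=\operatorname{id}_{\bar{M}}=1$. For multiplicativity, I would compute
\[
h_i(t)h_i(t')(\bar{z}_\mu)=h_i(t)\bigl(t'^{(\mu,\alpha_i^\vee)}\bar{z}_\mu\bigr)
=t'^{(\mu,\alpha_i^\vee)}t^{(\mu,\alpha_i^\vee)}\bar{z}_\mu=(tt')^{(\mu,\alpha_i^\vee)}\bar{z}_\mu,
\]
which equals $h_i(tt')(\bar{z}_\mu)$; here I use that each $h_i(t)$ acts diagonally, so it commutes with scalar multiplication and its action on $\bar{z}_\mu$ depends only on the single exponent $(\mu,\alpha_i^\vee)$. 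Since each generator $h_i(t)$ is represented by a diagonal matrix, any product of them is diagonal, so all elements of $H$ are diagonal; and diagonal matrices commute, giving that $H$ is abelian.

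Finally, for the triviality criterion, I would observe that an arbitrary product $\prod_{i\in I} h_i(t_i)$ acts on $\bar{z}_\mu$ by the scalar $\prod_{i\in I} t_i^{(\mu,\alpha_i^\vee)}$, again because each factor is diagonal with the stated eigenvalue. Since $\{\bar{z}_\mu \mid \mu\in\Psi\}$ is a basis of the free $R$-module $\bar{M}$, the product equals $\operatorname{id}_{\bar{M}}=1$ if and only if every one of these scalars equals $1$, i.e. $\prod_{i\in I} t_i^{(\mu,\alpha_i^\vee)}=1$ for all $\mu\in\Psi$. No genuine obstacle arises: the whole statement reduces to the explicit monomial action of $\bar{n}_i(t)$, and the subtlest point is merely ensuring the case analysis on $(\mu,\alpha_i^\vee)$ correctly composes the two applications of $\bar{n}_i$.
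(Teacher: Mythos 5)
Your proposal is correct and matches the paper's own proof essentially step for step: the same three-case computation of $\bar{n}_i(t)\bar{n}_i(-1)$ on $\bar{z}_\mu$ using the monomial formula for $\bar{n}_i(t)$ (including the sign cancellation in the $(\mu,\alpha_i^\vee)=-1$ case, which you correctly, if tersely, dismiss as symmetric), followed by the formal deduction of multiplicativity, diagonality, and the triviality criterion from the eigenvalue formula and the fact that $\{\bar{z}_\mu\}$ is a basis of $\bar{M}$.
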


\begin{proof} Using the above formulae for the action of 
$\bar{n}_i(t)$, we obtain 
\[h_i(t)(\bar{z}_\mu)=\bar{n}_i(t)\bigl(\bar{n}_i(-1)(\bar{z}_\mu)\bigr)
=\left\{\begin{array}{cl} \bar{n}_i(t)(\bar{z}_\mu) & \quad \mbox{if $(\mu, 
\alpha_i^\vee)=0$},\\ \bar{n}_i(t)(\bar{z}_{\mu-\alpha_i}) & \quad 
\mbox{if $(\mu, \alpha_i^\vee)=1$},\\ -\bar{n}_i(t)(\bar{z}_{\mu+\alpha_i}) & 
\quad \mbox{if $(\mu,\alpha_i^\vee)=-1$}. \end{array}\right.\]
Now, if $(\mu, \alpha_i^\vee)=0$, then $\bar{n}_i(t)(\bar{z}_\mu)=
\bar{z}_\mu$. If $(\mu, \alpha_i^\vee)=1$, then $(\mu-\alpha_i,
\alpha_i^\vee)=-1$ and so $\bar{n}_i(t)(\bar{z}_{\mu-\alpha_i})=
t\bar{z}_\mu$. Finally, if $(\mu,\alpha_i^\vee)=-1$, then $(\mu+\alpha_i,
\alpha_i^\vee)=1$ and so $\bar{n}_i(t) (\bar{z}_{\mu-\alpha_i})=
-t^{-1}\bar{z}_\mu$. This yields the desired formula. 

The last statement about $\prod_{i\in I} h_i(t_i)$ is then clear. 
\end{proof}

\begin{lem} \label{lem23} Let $N:=\langle \bar{n}_i(t)\mid i\in I,
t\in R^\times \rangle \subseteq G$. Then $N=\langle H,\bar{n}_i\;
(i\in I)\rangle$ and $H$ is a normal subgroup of $N$. We have 
$\bar{n}_i\not\in H$ and $\bar{n}_i^2\in H$ for all $i\in I$. Furthermore,
$N\cap U^+=N\cap U^-=\{1\}$.
\end{lem}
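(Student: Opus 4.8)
The plan is to prove Lemma~\ref{lem23} in four parts, each following
from the explicit monomial-matrix descriptions established earlier. First,
for the equality $N=\langle H,\bar{n}_i\;(i\in I)\rangle$, I~would argue
that $N$ is generated by the $\bar{n}_i(t)$, and that each
$\bar{n}_i(t)=h_i(t)\bar{n}_i$ by the very definition
$h_i(t)=\bar{n}_i(t)\bar{n}_i(-1)=\bar{n}_i(t)\bar{n}_i^{-1}$ from
Lemma~\ref{lem22} (using $\bar{n}_i(-1)=\bar{n}_i^{-1}$, which follows as in
Lemma~\ref{lemni}). Thus every generator of $N$ lies in
$\langle H,\bar{n}_i\rangle$, and conversely each $h_i(t)$ and each
$\bar{n}_i=\bar{n}_i(1)$ lies in $N$, giving equality.

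Next, for the normality of $H$ in $N$, it suffices to check that each
$\bar{n}_j$ normalizes $H$, since $H$ is generated by the $h_i(t)$ and $N$
by $H$ together with the $\bar{n}_j$. Here I~would use that $H$ consists of
diagonal matrices acting by $h_i(t)(\bar{z}_\mu)=t^{(\mu,\alpha_i^\vee)}
\bar{z}_\mu$ (Lemma~\ref{lem22}) and that each $\bar{n}_j$ is a monomial
matrix permuting the basis (up to signs) by $\mu\mapsto s_j(\mu)$. A~direct
computation then shows $\bar{n}_j\,h_i(t)\,\bar{n}_j^{-1}$ is again diagonal,
acting on $\bar{z}_\mu$ by the scalar $t^{(s_j^{-1}(\mu),\alpha_i^\vee)}=
t^{(\mu,s_j(\alpha_i^\vee))}$ (invoking Remark~\ref{rem00}); since
$s_j(\alpha_i^\vee)$ is an integer combination of the $\alpha_k^\vee$, this
scalar is a product of the corresponding $t^{(\mu,\alpha_k^\vee)}$, so the
conjugate lies in $H$. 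The statements $\bar{n}_i^2\in H$ and
$\bar{n}_i\notin H$ follow from Lemma~\ref{lemni}: we have
$\bar{n}_i^2(\bar{z}_\mu)=(-1)^{(\mu,\alpha_i^\vee)}\bar{z}_\mu$, which equals
$h_i(-1)(\bar{z}_\mu)$ since $(-1)^{(\mu,\alpha_i^\vee)}=
(-1)^{(\mu,\alpha_i^\vee)}$, so $\bar{n}_i^2=h_i(-1)\in H$; and $\bar{n}_i$
is a genuinely off-diagonal monomial matrix (by Lemma~\ref{rem00a} there is
some $\mu$ with $(\mu,\alpha_i^\vee)\neq0$, so $\bar{n}_i$ sends $\bar{z}_\mu$
to $\pm\bar{z}_{\mu\pm\alpha_i}$ with $\mu\pm\alpha_i\neq\mu$), whence it is
not diagonal and $\bar{n}_i\notin H$.

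Finally, for $N\cap U^+=N\cap U^-=\{1\}$, the plan is to use the triangular
shape recorded in Remark~\ref{reduce}: with the enumeration of $\Psi$ fixed
as in Remark~\ref{rem10}, every element of $U^+$ is upper triangular with
$1$'s on the diagonal, every element of $U^-$ is lower triangular with
$1$'s on the diagonal, while every element of $N$ is a monomial matrix.
An~element of $N\cap U^+$ is thus simultaneously monomial and
upper-unitriangular; a~monomial matrix that is upper triangular must already
be diagonal, and being unitriangular forces it to be the identity. The same
argument applies verbatim to $N\cap U^-$.

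The main obstacle I~anticipate is the normality computation: one must track
the signs in the monomial matrices $\bar{n}_j$ carefully enough to confirm
that conjugation sends a \emph{diagonal} matrix to a diagonal matrix and to
identify the resulting scalar as $t^{(\mu,s_j(\alpha_i^\vee))}$. The signs
themselves are harmless since $H$ is defined by diagonal scaling, but
verifying that the exponent transforms correctly under $\mu\mapsto s_j(\mu)$
is where the compatibility $w(e^\vee)=(w(e))^\vee$ from Remark~\ref{rem00}
is essential. The remaining parts are essentially bookkeeping with the
explicit formulae already in hand.
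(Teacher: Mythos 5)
Your proposal is correct and follows essentially the same route as the paper: $h_i(t)=\bar{n}_i(t)\bar{n}_i^{-1}$ for the generation statement, $\bar{n}_i^2=h_i(-1)$ and Lemma~\ref{rem00a} for the statements about $\bar{n}_i$, conjugation of the diagonal action by the monomial matrices $\bar{n}_j$ (with the exponent transforming via $s_j(\alpha_i^\vee)=\alpha_i^\vee-a_{ij}\alpha_j^\vee$, which is how the paper makes precise your "integer combination of coroots" step) for normality, and the monomial-versus-unitriangular clash for $N\cap U^{\pm}=\{1\}$. The only cosmetic difference is that the paper derives the conjugation formula by an explicit case analysis on $(\mu,\alpha_i^\vee)\in\{0,\pm 1\}$, whereas you shortcut it using the already-established monomial form of $\bar{n}_j$; both rest on the same compatibility from Remark~\ref{rem00}.
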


\begin{proof} By Lemma~\ref{lem22}, we have $\bar{n}_i(-1)=
\bar{n}_i^{-1}h_i(1)=\bar{n}_i^{-1}$ and so $h_i(t)=\bar{n}_i(t)
\bar{n}_i^{-1}$ for all $i\in I$ and $t\in R^\times$. This shows that $N=
\langle H,\bar{n}_i \;(i\in I)\rangle$. Now let $i\in I$ be fixed. We 
have $\bar{n}_i^2= h_i(-1)\in H$. By Lemma~\ref{rem00a}, there exists 
some $\mu\in\Psi$ such that $(\mu, \alpha_i^\vee)\neq 0$. But then $(\mu,
\alpha_i^\vee)=\pm 1$ and so $s_i(\mu)=\mu\pm \alpha_i$. Thus, $\bar{n}_i
(\bar{z}_\mu)=\pm \bar{z}_{\mu \pm \alpha_i}$ and so $\bar{n}_i$
is not represented by a diagonal matrix. Hence, $\bar{n}_i \not\in H$. 

It remains to show that $H$ is a normal subgroup of $N$. 
Let $j\in I$, $t\in R^\times$. Then 
\[\bar{n}_ih_j(t)\bar{n}_i^{-1}(z_\mu)=\left\{\begin{array}{cl} 
t^{(\mu,\alpha_j^\vee)} \bar{n}_i(\bar{z}_\mu) & \quad \mbox{ if $(\mu,
\alpha_i^\vee)=0$},\\ t^{(\mu-\alpha_i,\alpha_j^\vee)}\bar{n}_i
(\bar{z}_{\mu-\alpha_i}) & \quad \mbox{ if $(\mu,\alpha_i^\vee)=1$},\\
-t^{(\mu+\alpha_i,\alpha_j^\vee)}\bar{n}_i(\bar{z}_{\mu+\alpha_i}) & \quad 
\mbox{ if $(\mu,\alpha_i^\vee)=-1$}.\end{array}\right.\]
Now, in the first case, we have $\bar{n}_i(\bar{z}_\mu)=\bar{z}_\mu$. In
the second case, $(\mu-\alpha_i,\alpha_i^\vee)=-1$ and so $\bar{n}_i
(\bar{z}_{\mu-\alpha_i})=\bar{z}_\mu$; also note that $\mu-\alpha_i=s_i
(\alpha)$. In the third case, $(\mu+\alpha_i,\alpha_i^\vee)=1$ and so 
$\bar{n}_i(\bar{z}_{\mu-\alpha_i})=-\bar{z}_\mu$; also note that 
$\mu+\alpha_i= s_i(\alpha)$. Hence,
\[\bar{n}_ih_j(t)\bar{n}_i^{-1}(z_\mu)=t^{(s_i(\mu),\alpha_j^\vee)}
\bar{z}_\mu.\] 
Now, $(s_i(\mu),\alpha_j^\vee)=(\mu,s_i(\alpha_j^\vee))=(\mu,\alpha_j^\vee-
(\alpha_j^\vee,\alpha_i^\vee) \alpha_i)=(\mu,\alpha_j^\vee)-
a_{ji}(\mu,\alpha_i^\vee)$, so 
\[ \bar{n}_ih_j(t)\bar{n}_i^{-1}(\bar{z}_\mu)=t^{(\mu,\alpha_j^\vee-
a_{ji}(\mu, \alpha_i^\vee)}(\bar{z}_\mu)=\bigl(h_j(t)h_i(t)^{-a_{ji}})\bigr)
(\bar{z}_\mu),\] 
which shows that $\bar{n}_ih_j(t)\bar{n}_i^{-1}=h_j(t)h_i(t)^{-a_{ji}} 
\in H$, as claimed. Finally, let $g\in N\cap U^{\pm}$; we want to show
that $g=1$. Now, since $N$ is generated by $H$ and the $\bar{n}_i$ 
($i\in I$), we can write $g=\bar{n}_{i_1}\cdots \bar{n}_{i_r}h$ where 
$i_j\in I$ and $h\in H$. Let $w=s_{i_1} \cdots s_{i_r}\in W$. By 
Lemma~\ref{lem22} and the formulae for the action of the elements 
$\bar{n}_i$, we have $g(\bar{z}_\mu)=c_\mu \bar{z}_{w(\mu)}$ for all $\mu
\in\Psi$, where $c_\mu\in R^\times$. Thus, $g$ is represented by a monomial
matrix. On the other hand, $g\in U^{\pm}$ and so $g$ is represented by 
a triangular matrix with $1$ on the diagonal. Hence, $g=1$.
\end{proof}

\begin{lem} \label{lem24} Let $i\in I$, $\alpha\in\Phi$, $u\in R$ and 
$t\in R^\times$. Then 
\[h_i(t)x_\alpha(u)h_i(t)^{-1}=x_\alpha(ut^{(\alpha,\alpha_i^\vee)}).\]
Consequently, $U^{\pm}$ are normalised by $H$ and, hence, we
obtain subgroups $B^{\pm}:=U^{\pm}.H\subseteq G$. 
We have $B^+\cap B^-=H$ and $B^{\pm}\cap N=H$.
\end{lem}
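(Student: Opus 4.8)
The plan is to prove Lemma~\ref{lem24} in three stages: first the conjugation formula for $h_i(t)$ acting on $x_\alpha(u)$, then the normalisation of $U^{\pm}$ by $H$ (which yields the subgroups $B^{\pm}$), and finally the three intersection identities. For the conjugation formula, I~would work directly on the module $\bar{M}$. Since $x_\alpha(u)=\mbox{id}_{\bar{M}}+u\bar{\be}_\alpha$, it suffices to compute $h_i(t)\bar{\be}_\alpha h_i(t)^{-1}$. Using the explicit diagonal action $h_i(t)(\bar{z}_\mu)=t^{(\mu,\alpha_i^\vee)}\bar{z}_\mu$ from Lemma~\ref{lem22}, together with the formula for $\bar{\be}_\alpha$ in Remark~\ref{reduce}(b), a short computation shows that for $\mu$ with $(\mu,\alpha^\vee)=-1$ we get $h_i(t)\bar{\be}_\alpha h_i(t)^{-1}(\bar{z}_\mu)=t^{(\mu+\alpha,\alpha_i^\vee)-(\mu,\alpha_i^\vee)}\bar{\be}_\alpha(\bar{z}_\mu)=t^{(\alpha,\alpha_i^\vee)}\bar{\be}_\alpha(\bar{z}_\mu)$, and the same scalar $t^{(\alpha,\alpha_i^\vee)}$ trivially governs the $\mu$ for which $\bar{\be}_\alpha(\bar{z}_\mu)=0$. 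Hence $h_i(t)\bar{\be}_\alpha h_i(t)^{-1}=t^{(\alpha,\alpha_i^\vee)}\bar{\be}_\alpha$, which immediately gives the stated conjugation formula.

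Once the conjugation formula is in hand, the normalisation of $U^{\pm}$ is immediate: conjugation by $h_i(t)$ sends each generator $x_\alpha(u)$ of $U^+$ (respectively $U^-$) to $x_\alpha(ut^{(\alpha,\alpha_i^\vee)})$, which is again a generator of $U^+$ (respectively $U^-$) since $\alpha\in\Phi^+$ stays in $\Phi^+$; thus $HU^{\pm}H^{-1}=U^{\pm}$ and the sets $B^{\pm}=U^{\pm}.H$ are subgroups. The inclusion $H\subseteq B^+\cap B^-$ and $H\subseteq B^{\pm}\cap N$ is clear, so the content is in the reverse inclusions.

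The main obstacle is the pair of intersection identities $B^+\cap B^-=H$ and $B^{\pm}\cap N=H$, where I~expect to lean on the triangularity already established in Remark~\ref{reduce} together with the monomial-matrix structure from Lemma~\ref{lem23}. For $B^{\pm}\cap N=H$: writing $g\in B^{\pm}\cap N$ as $g=uh$ with $u\in U^{\pm}$, $h\in H$, I~would observe that $gh^{-1}=u\in U^{\pm}$, while $g,h\in N$ force $u=gh^{-1}\in N\cap U^{\pm}=\{1\}$ by the final assertion of Lemma~\ref{lem23}; hence $g=h\in H$. For $B^+\cap B^-=H$: an element $g\in B^+\cap B^-$ is simultaneously an upper-triangular matrix times a diagonal (from $B^+=U^+.H$) and a lower-triangular matrix times a diagonal (from $B^-=U^-.H$). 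Writing $g=u^+h_1=u^-h_2$ with $u^{\pm}\in U^{\pm}$ and $h_1,h_2\in H$, the element $(u^+)^{-1}u^-=h_1h_2^{-1}\in H$ is diagonal; but $(u^+)^{-1}u^-$ is a product of an upper-unitriangular and a lower-unitriangular matrix, and the only such product that is diagonal is the identity. Therefore $u^+=u^-$, and since $u^+\in U^+$ is upper-unitriangular while $u^-\in U^-$ is lower-unitriangular, both equal the identity, giving $g=h_1\in H$.

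The delicate point I~would flag is the claim that a product $(u^+)^{-1}u^-$ which is diagonal must be trivial. This is where the enumeration of $\Psi$ from Remark~\ref{rem10} does the work: with respect to that ordering $U^+$ consists of upper-unitriangular and $U^-$ of lower-unitriangular matrices, and the standard uniqueness of the (unimodular) triangular factorisation—or equivalently the direct observation that an upper-unitriangular matrix equal to a lower-unitriangular matrix must be the identity—closes the argument. I~would state this triangular-factorisation fact explicitly rather than leaving it implicit, since it is the one genuinely non-formal ingredient; everything else reduces to the bookkeeping already prepared in Lemmas~\ref{lem22} and~\ref{lem23}.
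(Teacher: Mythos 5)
Your proposal is correct and follows essentially the same route as the paper: the conjugation formula is obtained by the same direct computation on the basis vectors $\bar{z}_\mu$ using the diagonal action of $h_i(t)$ from Lemma~\ref{lem22} and the formula for $\bar{\be}_\alpha$, and the intersection statements are exactly the details the paper compresses into the one-line appeal to Lemma~\ref{lem23}. Your explicit use of the unitriangular/diagonal factorisation (an upper-unitriangular times lower-unitriangular matrix that is diagonal must be trivial) is precisely the triangularity argument, via Remark~\ref{reduce} and Lemma~\ref{lem22}, that the paper leaves implicit for $B^+\cap B^-=H$, so nothing is missing.
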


\begin{proof} We have $h_i(t)^{-1}=h_i(t^{-1})$. Using Lemma~\ref{lem22},
we obtain 
\begin{align*}
h_i(t)x_\alpha(u)&h_i(t)^{-1}(\bar{z}_\mu)=h_i(t)x_\alpha(u)
h_i(t^{-1})(\bar{z}_\mu)=t^{-(\mu,\alpha_i^\vee)}h_i(t)x_\alpha(u)
(\bar{z}_\mu)\\ &=t^{-(\mu,\alpha_i^\vee)}h_i(t)(\bar{z}_\mu+
u\be_\alpha(\bar{z}_\mu))=\bar{z}_\mu+t^{-(\mu,\alpha_i^\vee)}
uh_i(t)(\be_\alpha(\bar{z}_\mu)).
\end{align*}
If $(\mu, \alpha^\vee)\neq -1$, then $\be_\alpha(\bar{z}_\mu)=0$ and 
so $h_i(t)x_\alpha(u)h_i(t)^{-1}(\bar{z}_\mu)=\bar{z}_\mu$. But, in this 
case, we also have $x_\alpha(u')(\bar{z}_\mu)=z_\mu+u'\be_\alpha
(\bar{z}_\mu)=\bar{z}_\mu$ for any $u'\in R$, as required. If $(\mu,
\alpha_j^\vee)=-1$, then $\be_\alpha(\bar{z}_\mu)=\epsilon 
\bar{z}_{\mu+\alpha}$ (with $\epsilon=\pm 1$) and so 
\[h_i(t)x_\alpha(u)h_i(t)^{-1}(\bar{z}_\mu)=\bar{z}_\mu+\epsilon 
t^{-(\mu,\alpha_i^\vee)}ut^{(\mu+\alpha,\alpha_i^\vee)}
(\bar{z}_{\mu+\alpha})=\bar{z}_\mu+\epsilon ut^{(\alpha,\alpha_i^\vee)}
\bar{z}_{\mu+\alpha},\]
which is the same as $x_\alpha(ut^{(\alpha,\alpha_i^\vee)})(\bar{z}_\mu)$. 
By Lemma~\ref{lem23}, we have $U^{\pm}\cap N=\{1\}$ which implies
that $B^+\cap B^-=H$ and $B^{\pm}\cap N=H$.
\end{proof}

\begin{cor} \label{perfect} For $i\in I$ define the subgroup $G_i:=
\langle x_i(t),y_i(t)\mid t\in R \rangle\subseteq G$. If there exists
some $t\in R^\times$ such that $t^2-1\in R^\times$, then $G_i=[G_i,G_i]$.
\end{cor}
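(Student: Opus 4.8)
The plan is to prove the non-trivial inclusion $G_i\subseteq[G_i,G_i]$ (the reverse being automatic) by showing that every generator $x_i(u)$ and $y_i(u)$ of $G_i$ already lies in the commutator subgroup $[G_i,G_i]$. The crucial preliminary observation is that the torus elements $h_i(t)$ are themselves available inside $G_i$: indeed $\bar{n}_i(t)=x_i(t)y_i(-t^{-1})x_i(t)\in G_i$ for $t\in R^\times$, so that $h_i(t)=\bar{n}_i(t)\bar{n}_i(-1)\in G_i$ by its very definition in Lemma~\ref{lem22}. Thus any commutator of the form $[h_i(t),x_i(u)]$ or $[h_i(t),y_i(u)]$ automatically lies in $[G_i,G_i]$.

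Next I would compute these commutators using the conjugation formula of Lemma~\ref{lem24}. Writing $e_i=\pm\be_{\alpha_i}$ (so that $x_i(u)=x_{\alpha_i}(\pm u)$) and recalling $(\alpha_i,\alpha_i^\vee)=2$, Lemma~\ref{lem24} gives $h_i(t)x_i(u)h_i(t)^{-1}=x_i(ut^2)$; alternatively this follows by a one-line direct check from the diagonal action $h_i(t)(\bar{z}_\mu)=t^{(\mu,\alpha_i^\vee)}\bar{z}_\mu$ of Lemma~\ref{lem22}, which avoids any sign bookkeeping. Since $x_i(ut^2)x_i(-u)=x_i(u(t^2-1))$, I obtain
\[
[h_i(t),x_i(u)]=x_i\bigl(u(t^2-1)\bigr)\in[G_i,G_i]\qquad(u\in R,\ t\in R^\times).
\]
The analogous computation for the negative root, using $(-\alpha_i,\alpha_i^\vee)=-2$, yields $[h_i(t),y_i(u)]=y_i\bigl(u(t^{-2}-1)\bigr)$.

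Finally I would invoke the hypothesis. Choose $t\in R^\times$ with $t^2-1\in R^\times$. Then for any $v\in R$, setting $u:=v(t^2-1)^{-1}\in R$ gives $x_i(v)=[h_i(t),x_i(u)]\in[G_i,G_i]$, so all $x_i(v)$ lie in the commutator subgroup. For the $y_i$ note that $t^{-2}-1=-t^{-2}(t^2-1)$ is again a unit, so the same argument shows every $y_i(v)\in[G_i,G_i]$. As $G_i$ is generated by the $x_i(v)$ and $y_i(v)$, this forces $G_i\subseteq[G_i,G_i]$ and hence equality. I do not expect a genuine obstacle here: the entire argument rests on the single fact that the torus normalises each root subgroup by the scaling factor $t^{\pm2}$, and the hypothesis on $t^2-1$ is exactly what is needed to make $u\mapsto u(t^2-1)$ surjective on $R$.
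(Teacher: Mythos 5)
Your proof is correct and follows essentially the same route as the paper's: conjugation by the torus element $h_i(t)$ (Lemma~\ref{lem24}) rescales the parameter by $t^{\pm 2}$, so once $t^2-1\in R^\times$ the commutators $[h_i(t),x_i(u)]$ and $[h_i(t),y_i(u)]$ sweep out all of $x_i(R)$ and $y_i(R)$, forcing $G_i=[G_i,G_i]$. If anything, you are slightly more careful than the paper, which asserts only $h_i(t)\in G$ rather than the containment $h_i(t)\in G_i$ actually needed for the commutators to lie in $[G_i,G_i]$, and which disposes of the $y_i$ case with a ``similarly'' where you make the unit $t^{-2}-1=-t^{-2}(t^2-1)$ explicit.
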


\begin{proof} Let $u\in R$. We have $\bar{n}_i\in G$ and, by 
Lemma~\ref{lem22}, we have $h_i(t)\in G$. Furthermore, $h_i(t)x_i(u) 
h_i(t)^{-1}=x_i(ut^2)$ and so $x_i(ut^2-u)=x_i(ut^2)x_i(u)^{-1}\in 
[G_i,G_i]$; see Lemma~\ref{lem24}. Since $t^2-1\in R^\times$, we conlude 
that $x_i(u')\in [G_i,G_i]$ for all $u'\in R$. Similarly, $y_i(u')\in [G_i,
G_i]$ for all $u'\in R$.
\end{proof}

\begin{lem} \label{lemb} There is a unique group isomorphism
$W\stackrel{\sim}{\rightarrow} N/H$, such that $s_i\mapsto \bar{n}_iH$ 
for all $i\in I$.
\end{lem}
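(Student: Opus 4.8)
The plan is to construct the isomorphism $W \stackrel{\sim}{\to} N/H$ by exhibiting a surjective homomorphism $W \to N/H$ via the braid relations and then verifying injectivity. The natural candidate for the map is $s_i \mapsto \bar{n}_iH$. To see that this is well-defined as a group homomorphism, I would invoke the standard fact that $W$ has a presentation as a Coxeter group with generators $s_i$ subject to the braid relations $\underbrace{s_is_js_i\cdots}_{m_{ij}} = \underbrace{s_js_is_j\cdots}_{m_{ij}}$ together with the relations $s_i^2 = 1$. The braid relations are satisfied by the $\bar{n}_i$ themselves (this follows from Proposition~\ref{braidr}, which was proven for the $n_i$ over $\C$; the identical computation on the explicit monomial matrices works over any $R$, or one reduces mod the $\Z$-form as in Remark~\ref{reduce}). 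The relation $s_i^2=1$ becomes $\bar{n}_i^2 \in H$, which holds by Lemma~\ref{lem23}. Hence the assignment $s_i \mapsto \bar{n}_iH$ respects all the defining relations of $W$ and extends to a group homomorphism $\varphi\colon W \to N/H$.

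\medskip
\emph{Surjectivity} is immediate: by Lemma~\ref{lem23}, $N = \langle H, \bar{n}_i \;(i\in I)\rangle$, so $N/H$ is generated by the cosets $\bar{n}_iH$, which are exactly the images of the generators $s_i$ under $\varphi$. The uniqueness claim is then automatic, since any homomorphism agreeing with $\varphi$ on the generators $s_i$ must coincide with $\varphi$.

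\medskip
\textbf{The main obstacle is injectivity}, and I expect this to be the substantive part of the argument. To prove $\varphi$ is injective, I would use the \emph{action on weights}. By Lemma~\ref{lem22} and the explicit formulae for the $\bar{n}_i(t)$, every element $n \in N$ is a monomial matrix acting on $\{\bar{z}_\mu \mid \mu\in\Psi\}$, and if we write $n = \bar{n}_{i_1}\cdots\bar{n}_{i_r} h$ with $h \in H$, then $n(\bar{z}_\mu) = c_\mu\,\bar{z}_{w(\mu)}$ with $c_\mu \in R^\times$, where $w = s_{i_1}\cdots s_{i_r}$. This defines a well-defined permutation $\pi\colon N/H \to \operatorname{Sym}(\Psi)$ sending $nH$ to the permutation $\mu \mapsto w(\mu)$ of $\Psi$; it is well-defined because elements of $H$ act diagonally and so induce the identity permutation of the set of lines $\langle \bar{z}_\mu\rangle$. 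The composite $\pi \circ \varphi\colon W \to \operatorname{Sym}(\Psi)$ is precisely the natural action of $W$ on the finite set $\Psi$. To conclude injectivity of $\varphi$, it suffices to show this $W$-action on $\Psi$ is \emph{faithful}. This is where I would appeal to the earlier structural results: by Lemma~\ref{rem00a} we have $\Phi \subseteq \Z\Psi$, so $E = \langle\Psi\rangle_\R$, and any $w\in W$ fixing $\Psi$ pointwise fixes all of $E$ and is therefore the identity in $W \subseteq \GL(E)$. Hence $\pi\circ\varphi$ is injective, which forces $\varphi$ to be injective.

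\medskip
Combining surjectivity with injectivity gives the desired isomorphism $W \stackrel{\sim}{\to} N/H$ with $s_i \mapsto \bar{n}_iH$, completing the proof.
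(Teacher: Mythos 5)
Your proof is correct, and the first half (well-definedness via the Coxeter presentation, braid relations from Proposition~\ref{braidr}, $\bar{n}_i^2\in H$ from Lemma~\ref{lem23}, and surjectivity) coincides with the paper's argument. Where you genuinely diverge is injectivity. The paper argues through the root subgroups: if $\bar{n}_{i_1}\cdots\bar{n}_{i_k}\in H$, then conjugation gives $X_{w(\alpha)}=X_\alpha$ for all $\alpha\in\Phi$ (using $\bar{n}_iX_\alpha\bar{n}_i^{-1}=X_{s_i(\alpha)}$ and the fact from Lemma~\ref{lem24} that $H$ normalises each $X_\alpha$); since $X_\alpha\neq\{1\}$ is upper triangular unipotent for $\alpha\in\Phi^+$ and lower triangular unipotent for $\alpha\in\Phi^-$, this forces $w(\Phi^+)\subseteq\Phi^+$ and hence $w=1$. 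You instead work on the weight basis: an element of $N$ representing $w$ is a monomial matrix with permutation pattern $\mu\mapsto w(\mu)$, so lying in $H$ (diagonal) forces $w$ to fix $\Psi$ pointwise, and faithfulness of the $W$-action on $\Psi$ follows from Lemma~\ref{rem00a} ($\Phi\subseteq\Z\Psi$, so $\Psi$ spans $E$). Your route is more elementary and self-contained: it needs only the explicit monomial formulae (already exploited in the paper's proof of $N\cap U^\pm=\{1\}$ in Lemma~\ref{lem23}) and Lemma~\ref{rem00a}, and bypasses Lemma~\ref{lem24} and the $X_\alpha$ entirely; it does, however, depend on the special feature of this setting that $W$ acts faithfully on the weight set $\Psi$. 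The paper's route reuses the root-subgroup machinery that must be set up anyway for the $BN$-pair axioms, and it is the argument that survives in settings where one controls root subgroups rather than a faithful permutation basis. One cosmetic remark: your intermediate map $\pi\colon N/H\to\operatorname{Sym}(\Psi)$ is cleanest if defined intrinsically as the monomial pattern of a matrix (a homomorphism on $N$ killing $H$), rather than via a chosen expression $n=\bar{n}_{i_1}\cdots\bar{n}_{i_r}h$; for injectivity alone you only need that an element of $H$ has trivial pattern, which is exactly how you conclude.
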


\begin{proof} (Cf.\ \cite[Lemma~22, p.~31]{St}.) By Lemma~\ref{lem23}, 
$\bar{n}_i^2\in H$ for all $i\in I$. Furthermore, by 
Proposition~\ref{braidr}, the elements $n_i$ ($i\in I$) satisfy the braid 
relations and, hence, the same is true for the elements $\bar{n}_i$ 
($i\in I$). Now, it is well-known that $W$ has a presentation with 
generators $\{s_i \mid i\in I\}$ subject to the relations $s_i^2=1$
($i\in I$) and the braid relations (for the $s_i$). Thus, we obtain a 
unique group homomorphism $W\rightarrow N/H$ such that $s_i\mapsto 
\bar{n}_iH$ for all $i\in I$. It is surjective by Lemma~\ref{lem23}. To 
prove injectivity, assume that $w\in W$ maps to $1\in N/H$. Write 
$w=s_{i_1}\cdots s_{i_k}$ where $k\geq 0$ and $i_1,\ldots,i_k\in I$. 
Then $\bar{n}_{i_1}\cdots \bar{n}_{i_k}\in H$. Since $\bar{n}_iX_\alpha 
\bar{n}_i^{-1}=X_{s_i(\alpha)}$ for all $i\in I$ and $\alpha\in\Phi$, we 
conclude that $\bar{n}_{i_1}\cdots \bar{n}_{i_k}X_\alpha \bar{n}_{i_k}^{-1}
\cdots \bar{n}_{i_1}=X_{w(\alpha)}$. On the other hand, $H$ normalises 
$X_\alpha$ by Lemma~\ref{lem24} and so $X_{w(\alpha)}=X_\alpha$. Now 
recall that $X_\alpha$ consists of upper triangular matrices if 
$\alpha\in\Phi^+$, and of lower triangular matrices if $\alpha\in\Phi^-$; 
also note that $X_\alpha \neq \{1\}$ (since $\bar{\be}_\alpha\neq 0$). 
Thus, if $\alpha\in\Phi^+$, then the condition $X_{w(\alpha)}=X_\alpha$ 
implies that $w(\alpha)\in\Phi^+$. So we must have $w=1$. 
\end{proof}

Finally, let us assume from now on that $R=k$ is a field. By exactly the 
same arguments as in \cite[\S 8.2]{Ca1}, one sees that the subgroups $B^+$ 
and $N$ form  a $BN$-pair in~$G$. Similarly, $B^-$ and $N$ also form a 
$BN$-pair in~$G$. Since $N_G(B^{\pm})=B^{\pm}$ (see \cite[8.3.3]{Ca1}),
it follows that $Z(G)$ (the center of $G$) is contained in~$B^+\cap B^-=H$. 

\begin{cor}[Cf.\ \protect{\cite[Lemma 28, p.~43]{St}}] \label{center} We have
$|Z(G)|<\infty$ and 
\begin{center}
$Z(G)=\big\{ \prod_{i\in I} h_i(t_i) \in H\,(t_i\in k^\times) \,\big|\, 
\prod_{i\in I} t_i^{(\alpha,\alpha_i^\vee)}=1 \mbox{ for
all $\alpha\in \Phi$}\big\}$.
\end{center}
\end{cor}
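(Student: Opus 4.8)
The plan is to use that every central element already lies in $H$, and then to translate centrality into a multiplicative condition on the parameters $t_i$ via the conjugation formula of Lemma~\ref{lem24}; finiteness will come from the nonsingularity of the Cartan matrix.

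First I would recall that, as observed just before the statement, $Z(G)\subseteq B^+\cap B^-=H$; hence any $z\in Z(G)$ can be written as $z=\prod_{i\in I}h_i(t_i)$ with $t_i\in k^\times$. Since $G=\langle x_\alpha(u)\mid \alpha\in\Phi,\ u\in k\rangle$ by Remark~\ref{reduce}(b), such a $z$ is central if and only if it commutes with every $x_\alpha(u)$. Applying Lemma~\ref{lem24} repeatedly (and using that $H$ is abelian), I would compute
\[
z\,x_\alpha(u)\,z^{-1}=x_\alpha\Bigl(u\prod_{i\in I}t_i^{(\alpha,\alpha_i^\vee)}\Bigr)
\qquad(\alpha\in\Phi,\ u\in k).
\]
Because $\bar{\be}_\alpha\neq 0$, the map $u\mapsto x_\alpha(u)$ is injective, so $z$ commutes with $X_\alpha$ if and only if $\prod_{i\in I}t_i^{(\alpha,\alpha_i^\vee)}=1$. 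Letting $\alpha$ range over all of $\Phi$ then yields exactly the displayed description of $Z(G)$.

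For the finiteness I would first note that the condition $\prod_{i\in I}t_i^{(\alpha,\alpha_i^\vee)}=1$ for all $\alpha\in\Phi$ is equivalent to its restriction to the simple roots $\alpha=\alpha_j$: writing a general root as $\alpha=\sum_j c_j\alpha_j$ with $c_j\in\Z$ and using linearity of $(\,\cdot\,,\alpha_i^\vee)$, the relation for $\alpha$ is a product of the relations for the $\alpha_j$. Thus a central $z$ is determined by a tuple $(t_i)$ satisfying $\prod_{i\in I}t_i^{a_{ij}}=1$ for all $j\in I$, where $a_{ij}=(\alpha_j,\alpha_i^\vee)$. The key point is now to exploit that the Cartan matrix $A=(a_{ij})$ is nonsingular. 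Setting $d:=\det A\neq 0$ and letting $B=(b_{jk})$ be its adjugate, so that $AB=d\cdot\mathrm{Id}$ over $\Z$, I would raise the $j$-th relation to the power $b_{jk}$ and multiply over $j$, obtaining
\[
1=\prod_{j\in I}\Bigl(\prod_{i\in I}t_i^{a_{ij}}\Bigr)^{b_{jk}}
=\prod_{i\in I}t_i^{\sum_j a_{ij}b_{jk}}=t_k^{\,d}
\qquad\mbox{for every $k\in I$}.
\]
Hence each $t_k$ is a $d$-th root of unity in $k$; since $X^d-1$ has at most $d$ roots in a field, there are only finitely many admissible tuples $(t_i)$, and therefore $|Z(G)|<\infty$.

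The only real obstacle is this last finiteness step: the set-theoretic description of $Z(G)$ is a direct unwinding of Lemma~\ref{lem24} together with the injectivity of $u\mapsto x_\alpha(u)$, but bounding $|Z(G)|$ genuinely requires that $A$ be invertible. The adjugate trick is precisely what converts the system of multiplicative relations $\prod_i t_i^{a_{ij}}=1$ into the individual relations $t_k^d=1$, and this is what makes the bound work uniformly over an arbitrary field $k$ without any further hypotheses.
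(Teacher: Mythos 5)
Your proposal is correct and follows essentially the same route as the paper: the containment $Z(G)\subseteq B^+\cap B^-=H$ established via the $BN$-pair just before the statement, then translation of centrality into the conditions $\prod_{i\in I}t_i^{(\alpha,\alpha_i^\vee)}=1$ using Lemma~\ref{lem24} together with the fact that $u\mapsto x_\alpha(u)$ is injective. The one place where you go beyond the paper is the finiteness claim: the paper's proof stops at ``it remains to use Lemma~\ref{lem24}'' and leaves $|Z(G)|<\infty$ implicit, whereas you supply an explicit argument, reducing to the simple-root conditions $\prod_i t_i^{a_{ij}}=1$ and using the adjugate of the Cartan matrix to deduce $t_k^{\det A}=1$ for all $k$; this is a correct and worthwhile completion of a step the paper glosses over.
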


\begin{proof} Let $h\in H$ and write $h=\prod_{i\in I} h_i(t_i)$ with
$t_i\in k^\times$. Then $h\in Z(G)\Leftrightarrow hx_\alpha(u)=
x_\alpha(u)h$ for all $\alpha \in \Phi$ and $u\in k$. It remains to 
use Lemma~\ref{lem24}.
\end{proof}

\begin{cor} \label{center1} Let us choose an enumeration of the 
elements of $\Psi$ as in Remark~\ref{rem10}. Then $B^+$ consists precisely
of all elements of $G$ which are represented by upper triangular matrices.
Similarly, $B^-$ consists precisely of all elements of $G$ which are 
represented by lower triangular matrices.
\end{cor}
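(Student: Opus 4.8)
The plan is to prove the two inclusions separately, the reverse one resting on the Bruhat decomposition attached to the $BN$-pair $(B^+,N)$ established above.

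First I would dispose of the easy inclusion. Since $B^+=U^+.H$, and since $U^+$ consists of upper triangular matrices (Remark~\ref{reduce}) while every element of $H$ is diagonal (Lemma~\ref{lem22}), every element of $B^+$ is upper triangular; the analogous statement holds for $B^-$ and $U^-$. This settles ``$\subseteq$''.

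For the reverse inclusion I would take $g\in G$ represented by an upper triangular matrix and feed it into the Bruhat decomposition $G=\bigsqcup_{w\in W}B^+\dot{w}B^+$: there is some $w\in W$ with $g\in B^+\dot{w}B^+$, and choosing a representative $\dot{w}\in N$ one may write $g=b_1\dot{w}b_2$ with $b_1,b_2\in B^+$. Because $b_1,b_2$ and $g$ are all upper triangular by the first step, so is $\dot{w}=b_1^{-1}gb_2^{-1}$. The goal is then to force $w=1$. Every element of $N$ is monomial (by Lemma~\ref{lem23} and the formulae for the $\bar{n}_i$, such a product sends each $\bar{z}_\mu$ to a scalar multiple of $\bar{z}_{w(\mu)}$), so $\dot{w}$ has exactly one non-zero entry in each row and column, the entry in column $l$ lying in the row indexed by $w(\mu_l)$. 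Upper triangularity forces the index of $w(\mu_l)$ to be at most $l$ for every $l$; since $w$ permutes $\Psi$, the induced self-map $l\mapsto\mathrm{index}(w(\mu_l))$ of $\{1,\ldots,d\}$ is a permutation bounded above by the identity, hence equal to it. Thus $w$ fixes every $\mu\in\Psi$, and as $\Phi\subseteq\Z\Psi$ spans $E$ (Lemma~\ref{rem00a}) this gives $w=1$. Then $\dot{w}\in H\subseteq B^+$ and $g=b_1\dot{w}b_2\in B^+$, which is ``$\supseteq$''. The claim for $B^-$ follows by the symmetric argument applied to the $BN$-pair $(B^-,N)$.

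The step I expect to be the main obstacle is the monomial-matrix argument, i.e.\ converting ``$\dot{w}\in N$ is upper triangular'' into ``$w$ acts trivially on $\Psi$''. This requires matching the $\preceq$-enumeration of Remark~\ref{rem10} correctly against the triangular shape and then using that $\Psi$ spans $E$, so that fixing $\Psi$ pointwise really does collapse $w$ to $1$ in $W$.
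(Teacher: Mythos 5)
Your proof is correct and follows essentially the same route as the paper: the easy inclusion via $B^+=U^+.H$, then the Bruhat decomposition $g=b_1\dot{w}b_2$, and the observation that an upper triangular monomial matrix in $N$ must lie in $H$. The only difference is that you spell out in detail the final step (a permutation bounded above by the identity is the identity, and $w$ fixing $\Psi$ pointwise forces $w=1$ since $\Z\Psi\supseteq\Phi$ spans $E$), which the paper compresses into the phrase ``we conclude using Lemma~\ref{lemb} that $n\in H$''.
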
 

\begin{proof} Since $U^+$ is represented by upper triangular matrices 
with~$1$ on the diagonal and $H$ by diagonal matrices, it is clear that
$B^+$ is represented by upper triangular matrices. Conversely, 
assume that $g\in G$ is represented by an upper triangular 
matrix. By the Bruhat decomposition (see \cite[8.2.3]{Ca1}), we can 
write $g=bnb'$ where $b,b' \in B^+$ and $n\in N$. Then $n=b^{-1}gb'^{-1}$ 
is also represented by an upper triangular matrix. Since, on the other 
hand, $n$ is represented by a monomial matrix, we conclude using 
Lemma~\ref{lemb} that $n\in H$ and, hence, $g\in B^+$. 
\end{proof}

As in \cite[\S 5]{St}, the above results have the following
significance to the theory of semisimple algebraic groups. (For basic 
notions about algebraic groups, see \cite{my0}.)

\begin{thm} \label{alggr} Assume that $R=k$ is an algebraically closed
field. Then $G$ is a simple algebraic group, $B^{\pm}
\subseteq G$ are Borel subgroups, $H=B^+\cap B^-$ is a maximal torus
and $N/H\cong W$. The abelian group $\Z\Psi\subseteq \Lambda$ is 
naturally isomorphic to the character group of $H$, where the isomorphism 
is given by sending $\mu\in\Z\Psi$ to the unique homomorphism $\hat{\mu} 
\colon H\rightarrow k^\times$ such that $\hat{\mu}(h_i(t))=t^{(\mu,
\alpha_i^\vee)}$ for $i\in I$, $t\in k^\times$.
\end{thm}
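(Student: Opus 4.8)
The plan is to show that $G\subseteq\GL(\bar M)$ is a closed connected algebraic subgroup and then to feed the split $BN$-pair data already assembled in Lemmas \ref{lem22}--\ref{lem24} and Corollaries \ref{subgru}, \ref{center}, \ref{center1} into the standard algebraic-group dictionary, exactly as in Steinberg \cite[\S 5]{St} (see also Carter \cite[\S 8]{Ca1}). First I would note that each $x_\alpha\colon k\rightarrow\GL(\bar M)$, $t\mapsto\mbox{id}_{\bar M}+t\bar{\be}_\alpha$, is a morphism of algebraic groups from the additive group (recall $\bar{\be}_\alpha^2=0$), with image a one-dimensional closed connected unipotent subgroup $X_\alpha$. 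Since $G=\langle X_\alpha\mid\alpha\in\Phi\rangle$ and the subgroup of an algebraic group generated by a family of closed connected subgroups is again closed and connected (see \cite{my0}), it follows that $G$ is a closed connected algebraic subgroup of $\GL(\bar M)$, so that the phrase ``$G$ is an algebraic group'' is justified. In the same way $U^{\pm}$ are closed connected unipotent subgroups, even isomorphic to affine space as varieties by Corollary \ref{subgru}(a). The map $(k^\times)^I\rightarrow\GL(\bar M)$, $(t_i)\mapsto\prod_i h_i(t_i)$, is a morphism of algebraic groups whose matrix entries are the monomials $\prod_i t_i^{(\mu,\alpha_i^\vee)}$ (Lemma \ref{lem22}), so its image $H$ is a closed subtorus of the diagonal torus; hence $B^{\pm}=U^{\pm}H$ are closed connected solvable subgroups. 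Finally, $N$ is a finite union of cosets of the closed subgroup $H$, since $N/H\cong W$ is finite (Lemma \ref{lemb}); thus $N$ is closed with identity component $N^{\circ}=H$.

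At this stage all the ingredients of a split $BN$-pair with $B^{+}$ closed connected solvable, $H=B^+\cap B^-$ a maximal torus of $B^{+}$, and $N/H\cong W$ (Lemma \ref{lemb}) are in place, and I would invoke the general theory as in \cite[\S 5]{St} to conclude that $B^{\pm}$ are Borel subgroups, that $H$ is a maximal torus, and that $G$ is semisimple. Concretely, the Bruhat decomposition shows that the big cell $U^{-}B^{+}$ is dense, forcing $\dim G=\dim U^{-}+\dim B^{+}$ and hence the maximality of $B^{+}$ among closed connected solvable subgroups. By Lemma \ref{lem24} the torus $H$ acts on each root subgroup $X_\alpha$ through the character $\hat{\alpha}$, so the root system of $G$ relative to $H$ is identified with $\Phi$; since $\Phi$ is reduced, irreducible and of full rank in $\Z\Psi$, the group $G$ is (almost) simple.

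It remains to identify the character group of $H$, which is the one place I expect to require a genuine (though short) argument. For $\mu\in\Z\Psi$ the relation in Lemma \ref{lem22} shows that the assignment $h_i(t)\mapsto t^{(\mu,\alpha_i^\vee)}$ extends to a well-defined character $\hat{\mu}\in X(H)$, and $\mu\mapsto\hat{\mu}$ is a homomorphism $\Z\Psi\rightarrow X(H)$. It is injective, for $\hat{\mu}=1$ means $\prod_i t_i^{(\mu,\alpha_i^\vee)}=1$ for all $t_i\in k^\times$, which forces $(\mu,\alpha_i^\vee)=0$ for every $i\in I$ and hence $\mu=0$, as $\{\alpha_i^\vee\}$ is a basis of $E$. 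For surjectivity I would use that restriction of characters from the diagonal torus $T\subseteq\GL(\bar M)$ to the closed subtorus $H$ is surjective (a standard property of tori), together with the observation that the standard character $\varepsilon_\mu$ of $T$ reading off the $\mu$-th diagonal entry restricts on $H$ to exactly $\hat{\mu}$ for $\mu\in\Psi$; since the $\varepsilon_\mu$ ($\mu\in\Psi$) generate $X(T)$, the $\hat{\mu}$ ($\mu\in\Psi$) generate $X(H)$, so $\mu\mapsto\hat{\mu}$ is onto. As $\Phi\subseteq\Z\Psi\subseteq\Lambda$ by Lemma \ref{rem00a}, the lattice $\Z\Psi$ has rank $|I|=\dim H$, confirming that $\mu\mapsto\hat{\mu}$ is an isomorphism $\Z\Psi\stackrel{\sim}{\rightarrow}X(H)$ of the asserted form. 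The main obstacle here is organizational rather than computational: one must check that the $BN$-pair data of the previous section match the hypotheses of the general algebraic-group results, after which only the elementary character-group computation is genuinely specific to the present construction.
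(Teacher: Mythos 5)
Your overall skeleton is the paper's: show that $G$, $U^{\pm}$, $H$, $B^{\pm}$, $N$ are closed subgroups with the right connectedness properties, invoke Steinberg \cite[\S 5]{St} for the structural statements, and then compute the character group of $H$ by restriction from the diagonal torus of $\GL(\bar{M})$. The character-group part of your argument (well-definedness of $\hat{\mu}$ via Lemma~\ref{lem22}, injectivity by specialising the $t_i$ to force $(\mu,\alpha_i^\vee)=0$ for all $i$, surjectivity from surjectivity of restriction $X(T_d)\rightarrow X(H)$ together with the fact that the coordinate characters restrict to $\hat{\mu}$, $\mu\in\Psi$) is exactly the paper's argument and is correct.

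The gap lies in the two central structural claims. The paper's proof of both ``$B^{\pm}$ are Borel subgroups'' and ``$G$ is a simple algebraic group'' runs through Corollary~\ref{perfect}: since $k$ is algebraically closed (hence infinite) there exists $t\in k^\times$ with $t^2-1\in k^\times$, so each $G_i=\langle x_i(t),y_i(t)\rangle$ is perfect and therefore $G=[G,G]$. This is precisely what makes the machinery you cite applicable: simplicity comes from the Bourbaki criterion for groups with a $BN$-pair \cite[IV, \S 2, no.~7]{B}, which needs perfectness and yields that every proper normal subgroup lies in $Z(G)$; and the Borel property comes from Steinberg's argument on p.~59, which also uses perfectness --- any closed subgroup containing $B^+$ is a standard parabolic $P_J$ by abstract $BN$-pair theory, and for $J\neq\emptyset$ the group $P_J$ contains some nontrivial perfect $G_i$, so it cannot be solvable; hence $B^+$ is maximal among closed connected solvable subgroups. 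Your substitutes do not close this gap. Density of the big cell $U^-B^+$ gives $\dim G=\dim U^-+\dim B^+$, but that dimension equation alone does not exclude a strictly larger closed connected solvable subgroup; one really needs the parabolic argument just described. Likewise, ``reduced irreducible root system of full rank $\Rightarrow$ $G$ (almost) simple'' presupposes that $G$ is semisimple, which you also only assert via the reference to Steinberg --- and Steinberg's proof that the radical is trivial again uses $G=[G,G]$ (the radical is a central torus inside $B^+\cap B^-=H$, and a central torus in a perfect group must be trivial). So the one concrete ingredient missing from your write-up is the appeal to Corollary~\ref{perfect}; once you insert it, the references you quote carry the argument exactly as in the paper.
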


\begin{proof} Exactly as in \cite[\S 5]{St}, one sees that $G$ is
a connected algebraic group; furthermore, $B^{\pm}\subseteq G$ are
closed connected solvable subgroups and $H=B^+\cap B^- \subseteq G$ 
is a torus. Using Lemma~\ref{perfect}, we see that $G=[G,G]$. A 
general criterion about groups with a $BN$-pair now implies 
that every proper normal subgroup of $G$ is contained in $Z(G)$; see
\cite[IV, \S 2, no.~7]{B}. Hence, $G$ is a simple algebraic group.

Using once more Lemma~\ref{perfect} and the argument in
\cite[p.~59]{St}, we see that $B^{\pm}$ are Borel subgroups of $G$.
Since $B^{\pm}=U^{\pm}.H$ and $U^{\pm}\cap H=\{1\}$, it also follows
that $H$ is a maximal torus of $G$. The character group of $H$ is
determined as follows (cf.\ \cite[p.~60]{St}). Choosing an enumeration of 
$\Psi$, we obtain a closed embedding $G\subseteq \GL_d(k)$ where 
$d=|\Psi|$. Under this embedding, $H$ is contained in the maximal
torus $T_d\subseteq \GL_d(k)$ consisting of diagonal matrices. Hence, 
the characters of $H$ (i.e., algebraic homomorphisms $H\rightarrow k^\times$)
are obtained by restriction from the characters of $T_d$; see, e.g.,  
\cite[\S 3.1]{my0}. But a basis of the character group of $T_d$ is 
simply given by the $d$ homomorphisms $\chi_i\colon T_n\rightarrow
k^\times$ ($1\leq i \leq d$), where $\chi_i$ sends a diagonal matrix to
its $i$th diagonal entry. It remains to note that the restrictions of 
these characters $\chi_i$ to $H$ are just the maps $\hat{\mu}\colon
H\rightarrow k^\times$ for $\mu\in \Psi$ (see Lemma~\ref{lem22}). Thus, 
we obtain a surjective homomorphism of abelian groups $\Z\Psi \rightarrow 
\Hom_{\text{alg}}(H, k^\times)$, $\mu\mapsto \hat{\mu}$, and it is easy 
to see that this map is also injective.
\end{proof}

%%%%%%%%%%%%%%%%%%%%%%%%%%%%%%%%%%%%%%%%%%%%%%%%%%%%%%%%%%%%%%%%%%%%%%%%
\section{Remarks and examples} \label{sec4}

As mentioned in the introduction, the point of the above construction of 
the Chevalley group $G=G_R(\Psi)$ is that it does not involve any choices 
of signs and that it is completely explicit; in particular, it can be 
easily implemented on a computer. We begin by writing down the recipe for 
doing this.

\begin{rem} \label{algo} Let us fix one of the Dynkin diagrams in 
Table~\ref{Mcoxgraphs} and let $A=(a_{ij})_{i,j\in I}$ be the 
corresponding Cartan matrix. 

\medskip
{\em Step 1}. We identify the weight lattice $\Lambda$ with the free 
$\Z$-module of all $I$-tuples $(v_i)_{i \in I}$ where $v_i \in \Z$ for 
$i \in I$. Under this identification, $\varpi_i$ is the $I$-tuple with 
$1$ at position~$i$, and $0$ everywhere else. For $j \in I$, let 
\begin{center}
$\alpha_j :=(a_{ij})_{i \in I}=\sum_{i \in I} a_{ij}\varpi_i \in \Lambda$
\end{center}
and define $\sigma_j \in \GL_I(\Z)$ by $\sigma_j(\varpi_i)=\varpi_i- 
\delta_{ij} \alpha_j$ for all $i \in I$. (Note that $\sigma_j^2=
\mbox{id}_\Lambda$.)

\medskip
{\em Step 2}. Let $i_0\in I$ be a node marked ``$\circ$'' in our given 
Dynkin diagram. Then let $\Psi_{i_0}\subseteq \Lambda$ be the orbit of 
$\varpi_{i_0}$ under the action of the subgroup $\langle \sigma_j \mid j 
\in I\rangle\subseteq \GL_I(\Z)$. In Table~\ref{Morbits}, we list the 
sizes of these orbits for the various cases. 

\medskip
{\em Step 3}. Let $\Psi\subseteq \Lambda$ be a non-empty union of orbits 
$\Psi_{i_0}$ as in Step~2. Let $R$ be any commutative ring with $1$. Let 
$\bar{M}$ be a free $R$-module with a basis $\{\bar{z}_\mu\mid\mu\in\Psi\}$. 
For $i \in I$ and $t \in R$, we define $R$-linear maps $x_i(t)\colon \bar{M}
\rightarrow \bar{M}$ and $y_i(t)\colon \bar{M}\rightarrow \bar{M}$ by 
the following formulae:
\begin{align*}
x_i(t)\colon \quad\bar{z}_\mu&\mapsto \left\{\begin{array}{cl} \bar{z}_\mu
+ t\bar{z}_{\mu+\alpha_i} & \quad \mbox{if $\mu+\alpha_i\in \Psi$},\\
\bar{z}_\mu & \quad \mbox{otherwise}, \end{array}\right.\\
y_i(t)\colon \quad\bar{z}_\mu&\mapsto \left\{\begin{array}{cl} 
\bar{z}_\mu+t\bar{z}_{\mu-\alpha_i} & \quad \mbox{if $\mu-\alpha_i\in 
\Psi$},\\ \bar{z}_\mu & \quad \mbox{otherwise}, \end{array}\right. 
\end{align*}
where $\mu\in \Psi$. Then $x_i(t),y_i(t)\in \GL(\bar{M})$ and we obtain the
group
\[G=G_R(\Psi):=\langle x_i(t),y_i(t)\mid i\in I,t\in R \rangle\subseteq 
\GL(\bar{M}).\]
If $R=k$ is a field, then $G$ is the Chevalley group of type $\Psi$ over $k$
(as in \cite{St}). 
\end{rem}

\begin{table}[htbp]
\caption{Orbits of minuscule weights} \label{Morbits}
\begin{center}
$\renewcommand{\arraystretch}{1.2} \begin{array}{ccl} \hline \mbox{Type} 
& [\Lambda:\Z\Phi]& \mbox{Size of orbit of minuscule $\varpi_{i_0}$}\\ \hline
A_n & n+1& \binom{n+1}{i_0} \;\; (1\leq i_0 \leq n)\\ B_n & 2 & 2^n\;\; 
(i_0=1) \\ C_n & 2 & 2n\;\; (i_0=n) \\ D_n & 4 & 2^{n-1}\;\; (i_0=1,2), 
\quad 2n\;\; (i_0=n) \\ E_6 & 3 & 27 \;\; (i_0=1,6) \\ E_7 & 2 & 56 \;\; 
(i_0=7) \end{array}$
\end{center}
\end{table}

\begin{rem} \label{algo1} Let $\Psi$ be a union of orbits of minuscule
weights as above. Of particular importance is the case where $\Lambda=
\Z\Psi$; in this case, we say that $G$ is of {\em simply-connected type}. 
For each of the Dynkin diagrams in Table~\ref{Mcoxgraphs}, the constructions 
in this paper yield such a group of simply-connected type. (For the remaining 
diagrams $G_2,F_4,E_8$, there is no distinction between simply-connected 
and adjoint types and so Lusztig's construction \cite{L5} is sufficient.) 
Indeed, recall from Lemma~\ref{rem00a} that $\Z\Phi\subseteq \Z \Psi
\subseteq \Lambda$. Now the index $[\Lambda:\Z\Phi]$ is finite and given
as in Table~\ref{Morbits}. More precisely, by \cite[VIII, \S 7, Prop.~8]{B}, 
we have that the cosets 
\[ \{\varpi_{i_0}+\Z \Phi \mid i_0\in I \mbox{ such that 
$\varpi_{i_0}$ minuscule}\} \subseteq \Lambda/\Z\Phi\]
are precisely the non-zero elements of $\Lambda/\Z\Phi$. Thus, if $\Psi$ is
the union of all orbits~$\Psi_{i_0}$ where $\varpi_{i_0}$ is minuscule, then 
$G=G_k(\Psi)$ may be regarded as a ``canonical'' realisation of the 
Chevalley group of simply-connected type $\Psi$ over a field $k$. (In 
types $A_n$, $D_n$, $E_6$, one can choose $\Psi$ more economically, see the
examples below.)
\end{rem}

\begin{exmp} \label{sln} Let $n\geq 1$ and consider the Dynkin diagram of 
type $A_n$, as in Table~\ref{Mcoxgraphs}. With the above notation,
let $i_0=1$ and $\Psi=\Psi_1$. By Table~\ref{Morbits}, we have 
$|\Psi|=n+1$. In this case, one easily checks that $\Lambda=\Z \Psi$ and
\[\Psi=\{\varpi_1,\;\varpi_1-\alpha_1,\; \varpi_1-(\alpha_1+\alpha_2),\;
\ldots,\; \varpi_1-(\alpha_1+\ldots +\alpha_n)\}.\]
The matrix of $x_i(t)$ with respect to the corresponding basis of $\bar{M}$
has entries~$1$ along the diagonal, entry $t$ at the position $(i,i+1)$, 
and entry $0$ otherwise. Similarly, the matrix of $y_i(t)$ has $1$ 
along the diagonal, $t$ at the position $(i+1,i)$, and $0$ otherwise. Thus, 
we obtain the group $G=G_k(\Psi)= \mbox{SL}_{n+1}(k)$ in its standard 
respresentation. 

If $i_0\in \{2,\ldots,n-1\}$, then we obtain groups which are
neither of adjoint nor of simply-connected type in general. 
\end{exmp}
%????In \cite[VIII, \ 13, no.~1]{B}, the representations of 
%$\fg$ with highest weight $\varpi_2,\ldots, \varpi_{n-1}$ are realised in 
%terms of exterior powers of that with highest weight~$\varpi_1$.

\begin{exmp} \label{classi} Let $n \geq 2$ and consider the Dynkin diagram 
of type $B_n$. Let $i_0=1$ and $\Psi=\Psi_1$. By Table~\ref{Morbits} and 
Remark~\ref{algo1}, $|\Psi|=2^n$ and $\Lambda=\Z \Psi$. In this case, our 
$\fg$-module $M$ in Definition~\ref{defj} is the so-called {\it spin} 
representation; see \cite[VIII, \S 13, p.~197]{B} and note that $M$ has 
the correct heighest weight by Remark~\ref{remhw}. Thus, $G_k(\Psi)\cong 
\mbox{Spin}_{2n+1}(k)$ is the odd-dimensional spin group. 
\end{exmp}

\begin{exmp} \label{classi1} Let $n \geq 2$ and consider the Dynkin diagram 
of type $C_n$. Let $i_0=n$ and $\Psi=\Psi_n$. By Table~\ref{Morbits} and
Remark~\ref{algo1}, we have $|\Psi|=2n$ and $\Lambda=\Z \Psi$. 
In this case, our $\fg$-module $M$ in Definition~\ref{defj} is the 
``natural'' module (equipped with a non-degenerate invariant symplectic
form); see \cite[VIII, \S 13, p.~202]{B} and note that $M$ has the correct 
heighest weight by Remark~\ref{remhw}. Hence, we find that $G_k(\Psi)
\cong\mbox{Sp}_{2n}(k)$ is the symplectic group. 
\end{exmp}
%\[\Psi=\{\pm \varpi_n,\;\pm(\varpi_n-\alpha_n),\; \pm(\varpi_n-(\alpha_n{+}
%\alpha_{n-1})),\; \ldots,\; \pm(\varpi_n-(\alpha_n{+}\ldots {+}
%\alpha_{2}))\}.\]

\begin{exmp} \label{classo} Let $n \geq 3$ and consider the Dynkin diagram 
of type $D_n$. Let $i_0=n$ and $\Psi=\{w(\varpi_n)\mid w\in W\}$. By 
Table~\ref{Morbits}, we have 
$|\Psi|=2n$. In this case, one easily checks that $[\Lambda:\Z \Psi]=2$.
Our $\fg$-module $M$ in Definition~\ref{defj} is the ``natural'' module
(equipped with a non-degenerate invariant quadratic form); see 
\cite[VIII, \S 13, p.~209]{B} and note that $M$ has the correct heighest 
weight by Remark~\ref{remhw}. Thus, $G_k(\Psi)\cong \mbox{SO}_{2n}(k)$ is 
the even-dimensional orthogonal group. (It is neither of adjoint nor of 
simply-connected type.)
\end{exmp}

\begin{exmp} \label{spin} Let $n \geq 3$ and consider the Dynkin diagram
of type $D_n$ in Table~\ref{Mcoxgraphs}. Let $\Psi:=\Psi_1\cup \Psi_2$
(the union of the orbits of $\varpi_1$ and $\varpi_2$). By 
Table~\ref{Morbits} and Remark~\ref{algo1}, $|\Psi|=2^{n-1}+2^{n-1}=2^n$ and
$\Lambda=\Z\Psi$. In this case, our $\fg$-module $M$ in Definition~\ref{defj} 
is the direct sum of the two so-called {\it half-spin} representations; see 
\cite[VIII, \S 13, p.~209]{B} and note again that $M$ has the correct
heighest weights by Remark~\ref{remhw}. Hence, we conclude that $G=
G_k(\Psi) \cong \mbox{Spin}_{2n}(k)$ is the spin group.

This group plays a special role in the general theory because of its center. 
Let us explicitly determine $Z(G)$. We write $h(t_1,\ldots,t_n):=h_1(t_1) 
\cdots h_n(t_n)\in H$ for $t_i\in k^\times$. Then every $h\in H$ can be 
expressed uniquely in this way (since $\Z\Psi=\Lambda$). A straightforward 
computation yields that 
\[ Z(G)=\left\{\begin{array}{ll} \{h(t,t',1,tt',1,tt',1,\ldots) \mid t^2=
t'^2=1\} &\mbox{ if $n$ is even},\\ \;\;\{h(t,t^{-1},t^2,1,t^2,1,\ldots) 
\mid t^4=1\} & \mbox{ if $n$ is odd}.\end{array}\right.\]
Thus, if $n$ is even and $\mbox{char}(k)\neq 2$, then $Z(G)\cong \Z/2\Z 
\times \Z/2\Z$; in all other cases and for all other types of groups, 
$Z(G)$ is cyclic.
\end{exmp}

\begin{exmp}  \label{sce6} (a) Consider the Dynkin diagram of type $E_6$
in Table~\ref{Mcoxgraphs}. Let $i_0=1$ and $\Psi=\Psi_1$. By 
Table~\ref{Morbits} and Remark~\ref{algo1}, $|\Psi|=27$ and $\Lambda=\Z 
\Psi$. In this case, $\Psi$ is explicitly given by the following 
$6$-tuples (where $\pm$ stands for $\pm 1$):
\begin{gather*}
 {+}00000,\; {-}0{+}000,\; 00{-}{+}00,\; 0{+}0{-}{+}0,\; 0{-}00{+}0,\; 
0{+}00{-}{+},\; 0{-}0{+}{-}{+},\;\\ 0{+}000{-},\; 00{+}{-}0{+},\; 
0{-}0{+}0{-},\; {+}0{-}00{+},\; 00{+}{-}{+}{-},\; {-}0000{+},\; 
{+}0{-}0{+}{-},\; \\ 00{+}0{-}0,\; {-}000{+}{-},\; {+}0{-}{+}{-}0,\; 
{-}00{+}{-}0,\; {+}{+}0{-}00,\; {-}{+}{+}{-}00,\; {+}{-}0000,\; \\ 
{-}{-}{+}000,\; 0{+}{-}000,\; 0{-}{-}{+}00,\; 000{-}{+}0,\; 0000{-}{+},\; 
00000{-}0
\end{gather*}
(b) Consider the Dynkin diagram of type $E_7$. Let $i_0=7$ and $\Psi=\Psi_7$.
By Table~\ref{Morbits} and Remark~\ref{algo1}, $|\Psi|=56$ and $\Lambda=\Z 
\Psi$. The set $\Psi$ can also be computed as above, but we will not print
this here.
\end{exmp}

For other approaches to constructions related to minuscule representations
and the question of signs, see Green \cite{Gr}, Vavilov 
\cite{Va} and the references there. 

%%%%%%%%%%%%%%%%%%%%%%%%%%%%%%%%%%%%%%%%%%%%%%%%%%%%%%%%%%%%%%%%%%%%%%%%
%\bibliographystyle{amsplain}

\end{document}